\documentclass[12pt,reqno]{amsart}
\usepackage{amsmath,amssymb,bbm,subfig,url,ulem}
\usepackage{amsthm}
\usepackage{xcolor}
\usepackage{listings}
\usepackage[T1]{fontenc}
\usepackage[margin=1in]{geometry}

\usepackage{tikz}
\usepackage[bookmarks=false,unicode,colorlinks,urlcolor=red,citecolor=red,linkcolor=blue]{hyperref}

\usepackage{subfig}
\captionsetup[subfigure]{margin=10pt}

\usetikzlibrary{calc}
\usetikzlibrary{arrows}
\usetikzlibrary{patterns}
\usetikzlibrary{through}
\usetikzlibrary{plotmarks}

\numberwithin{equation}{section}

\usepackage{aliascnt}
\newtheorem{theorem}{Theorem}[section]

\newaliascnt{lemma}{theorem}
\newtheorem{lemma}[lemma]{Lemma}
\aliascntresetthe{lemma}

\newaliascnt{proposition}{theorem}

\aliascntresetthe{proposition}

\newaliascnt{corollary}{theorem}
\newtheorem{corollary}[corollary]{Corollary}
\aliascntresetthe{corollary}

\newaliascnt{conjecture}{theorem}

\aliascntresetthe{conjecture}

\newaliascnt{remark}{theorem}
\newtheorem{remark}[remark]{Remark}
\aliascntresetthe{remark}

\newaliascnt{definition}{theorem}
\newtheorem{definition}[definition]{Definition}
\aliascntresetthe{definition}

\makeatletter
\def\tagform@#1{\maketag@@@{\ignorespaces#1\unskip\@@italiccorr}}
\let\orgtheequation\theequation
\def\theequation{(\orgtheequation)}
\makeatother
\def\equationautorefname~{}

\newcommand{\Real}{{\mathbb{R}}}

\newcommand{\Id}{\mathbb{I}\mathrm{d}}
\newcommand{\Rd}{{\Real^d}}

\newcommand{\E}{{\bf E}}

\DeclareMathOperator{\tr}{Tr}

\newcommand{\mref}[1]{%
\href{http://www.ams.org/mathscinet-getitem?mr=#1}{#1}}

\newcommand{\arxiv}[1]{%
\href{http://front.math.ucdavis.edu/#1}{ArXiv:#1}}

\begin{document}

\title[Generalized tight $P$-frames and spectral bounds for Laplace-like operators.]{Generalized tight $p$-frames and spectral bounds for Laplace-like operators.}

\author{B. A. Siudeja}
\address{Department of Mathematics, Univ.\ of Oregon, Eugene,
OR 97403, U.S.A.}
\email{Siudeja\@@uoregon.edu}

\date{\today}
\begin{abstract}
  We prove sharp upper bounds for sums of eigenvalues (and other spectral functionals) of Laplace-like operators, including bi-Laplacian and fractional Laplacian. We show that among linear images of a highly symmetric domain, our spectral functionals are maximal on the original domain. We exploit the symmetries of the domain, and the operator, avoiding necessity of finding good test functions for variational problems. This is especially important for fractional Laplacian, since exact solutions are not even known on intervals, making it hard to find good test functions. 

To achieve our goals we generalize tight $p$-fusion frames, to extract the best possible geometric results for domains with isometry groups admitting tight $p$-frames. Any such group generates a tight $p$-fusion frame via conjugation of arbitrary projection matrix. We show that generalized tight $p$-frames can also be obtained by conjugation of arbitrary rectangular matrix, with frame constant depending on the singular values of the matrix.
\end{abstract}

\maketitle
\section{Introduction}

The dependence of the Laplace eigenvalues on the shape of the domain was extensively studied by many authors. In particular, shapes that maximize/minimize spectral functionals are of great interest (see monograph \cite{He06} for a comprehensive overview). We find extremizing domains for spectral functionals of a broad family of Laplace-like operators, ranging from bi-Laplacian (and higher order operators) to fractional Laplacian (and other non-local operators). The important common feature of all treated operators is invariance under isometries of the Euclidean space. We derive bounds for sums of eigenvalues of such operators on linearly transformed highly symmetric domains in terms of the eigenvalues on the symmetric domain. We obtain sharp bounds based purely on the symmetries of the domain and the operator, regardless if the eigenvalues and eigenfunctions of the symmetric domain are known explicitly. See \autoref{mainres} for bounds involving commonly used operators. This work extends \cite{LS11} and \cite{LS11plane} to more general operators. 

Our proofs are based on the theory of tight $p$-frames, recently studied by Bachoc, Ehler and Okoudjou (\cite{EO12,BE13}). In \autoref{pmatrix}, we generalize certain aspects of this theory, to find an exact method of evaluating quadratic forms on transformed domains. This part of the paper may be of independent interest. 

As discovered by Bachoc and Ehler \cite{BE13}, existence of tight $p$-frames is strongly connected to the theory of invariant polynomials for irreducible representations of finite groups of symmetries (subgroups of orthogonal groups). We emphasize and exploit this connection even further in \autoref{subspmatrix} to generalize $p$-fusion frames involving subspace projection matrices, to arbitrary matrices. 

The frame constants for such generalizations are particularly hard to evaluate. We meet the challenge (in \autoref{secfp}) using a mixture of techniques spanning variety of fields of mathematics. We use probabilistic arguments to establish two combinatorial formulas involving symmetric polynomials. We use algebra of symmetric functions on abstract alphabet (including alphabet manipulations) to relate our combinatorial identities. Finally, cycle index of the symmetric group provides the simplest form of the generalized $p$-frame constants. As a corollary of that result we also get a formula for the moments of a generalized $\chi^2$-distribution (sum of squares of centered Gaussian random variables with arbitrary variances), \autoref{chisquared}.

\subsection{Main results}\label{mainres}
In this section we present the eigenvalue estimates for a few commonly used Laplace-like operators. Most of these results are special cases of the general Fourier multiplier eigenvalue bound, \autoref{thmgeneralmult}. We present these special cases due to the importance of the operators as well as relative simplicity of the statements of the results. It is also worth noting that Hardy-Littlewood-P\'olya majorization can be used to further generalize all results for sums of eigenvalues to sums of any concave increasing function of eigenvalues (see \cite[Appendix A]{LS14}). Particularly interesting generalizations involve products of eigenvalues, partial sums of the spectral zeta function and trace of the heat kernel (cf. \cite[Theorem 1.1]{LS14}). Results for majorized sums can be added to virtually any bound in this paper. For example \autoref{thmgeneralmult} implies \autoref{thmgeneralmajorized}.

Our examples naturally split into plate-related higher order multipliers, and fractional probability related multipliers. The common theme is that we transform a domain using an invertible linear transformation on $\Rd$. Throughout this section $T$ will denote the transformation as well as its defining matrix.

\subsection{Clamped plate with tension}\label{mainclamped}
Consider the eigenvalue problem for the bi-Laplacian operator with tension and clamped boundary conditions
\begin{align*}
  \Delta^2u-\tau \Delta u&=\Gamma u &\text{ on }\Omega,\\
  u=\frac{\partial u}{\partial n}&=0 &\text{ on }\partial\Omega.
\end{align*}
Its eigenvalues $\Gamma_i$ correspond to frequencies of oscillations (energy levels) of a rigid plate $\Omega$ with clamped boundary. The constant $\tau$ corresponds to the ratio of the lateral tension to the flexural rigidity of the plate. Positive value of $\tau$ means that the plate is under tension, while negative $\tau$ indicates compression. Note that the Laplacian is a negative operator, hence $-$ sign in front of $\tau$ ensures that the higher the tension, the higher the eigenvalues. The eigenvalues $\Gamma_i$ were extensively studied both theoretically (e.g. Ashbaugh-Benguria \cite{AB95}, Kawohl-Levine-Velte \cite{KLV93}, Nadirashvili \cite{N95}, Payne \cite{P55}, Szeg\"o \cite{Sz50,Sz58}, Talenti \cite{T82}) and numerically (e.g. Kuttler-Sigillito \cite{KS80,KS81}, Leissa \cite{L69}, McLaurin \cite{McL68}, Shibaoka \cite{S57}, Wieners \cite{W96}). Abundance of numerical results clearly suggests theoretical difficulties in finding $\Gamma_i$. Indeed, one can explicitly find only the eigenvalues of balls.

We would like to emphasize, that our results naturally split into weaker but more general bounds obtained using classical frames, and stronger bounds requiring higher order frames. Whenever we write that $\Omega$ admits a $p$-frame, we mean that the isometry group of $\Omega$ is rich enough to generate $p$-frames. Note that classical $1$-frames are related to irreducibility of the isometry group of $\Omega$ via Schur's lemma (see e.g. Vale-Waldron \cite{VW04,VW05}). In \autoref{pexists} we give a list of domains admitting $p$-frames. In order to state the first theorem we need to define appropriate matrix norms.

\begin{definition}
  The Schatten norm of order $n$ of matrix $T$ is a sum of the $n$-th powers of the singular values of $T$. Equivalently
  \begin{align*}
    \|T\|_{n}^{n}=\tr\left(\sqrt{T^{\dagger}T}\right)^n,
  \end{align*}
  where $\dagger$ denotes the transpose and $\tr$ is the trace of a matrix. In particular when $n=2$ we obtain a more familiar Hilbert-Schmidt norm of $T$.
\end{definition}
 We will use the notation $\cdot|_{\tau,\Omega}$ to indicate that geometric/spectral quantity is evaluated on $\Omega$, with tension parameter $\tau$. 
\begin{theorem}\label{thmplate}
  Suppose $\Omega$ is a bounded domain in $\Rd$ with irreducible group of isometries (e.g. ball, regular polygon/polytope) and $T$ is a linear transformation. Then for the linearly transformed domain $T(\Omega)$:
  \begin{align}\label{plateweaker}
    \Gamma_1+\dots+\Gamma_n \Big|_{\tau C(T^{-1})/\|T^{-1}\|_2^2,T(\Omega)}\le \left.\frac{C(T^{-1})}{d}(\Gamma_1+\dots+\Gamma_n) \right|_{\tau,\Omega},
  \end{align} 
  where 
  \begin{align*}
    C(T^{-1})&=\|T^{-1}\|_4^4.
  \end{align*}

Furthermore if $\Omega$ admits $2$-frames (e.g. balls, regular polygons except for squares) then 
\begin{align}\label{platestronger}
  \Gamma_1+\dots+\Gamma_n \Big|_{\tau D(T^{-1})/\|T^{-1}\|_2^2,T(\Omega)}\le \left.\frac{D(T^{-1})}d(\Gamma_1+\dots+\Gamma_n) \right|_{\tau,\Omega},
  \end{align}
  where 
  \begin{align*}
    D(T^{-1})&=\frac{\|T^{-1}\|_2^4+2\|T^{-1}\|_4^4}{d+2}\le C(T^{-1}).
  \end{align*}
In both cases, equality holds when $T$ is a scalar multiple of an orthogonal matrix. 
 \end{theorem}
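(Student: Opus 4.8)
The plan is to combine the Rayleigh--Ritz (Ky Fan) characterization of $\Gamma_1+\dots+\Gamma_n$ with a group-averaged transplantation of the eigenfunctions, and to handle the fourth-order term by the clamped-plate integration-by-parts identity \emph{before} invoking the frame machinery. I will use two standard facts: on an admissible domain $U$ with tension $\sigma$,
\[ \Gamma_1+\dots+\Gamma_n\big|_{\sigma,U}=\min\ \sum_{i=1}^n\Bigl(\int_U|\Delta\psi_i|^2+\sigma\int_U|\nabla\psi_i|^2\Bigr), \]
the minimum over $L^2(U)$-orthonormal $n$-tuples $\psi_i\in H^2_0(U)$; and the clamped-plate identity $\int_U|\Delta\psi|^2=\int_U\|D^2\psi\|_2^2$ for $\psi\in H^2_0(U)$, where $\|D^2\psi\|_2^2=\sum_{j,k}(\partial_{jk}\psi)^2$. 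Let $\phi_1,\dots,\phi_n$ be $L^2(\Omega)$-orthonormal eigenfunctions on $\Omega$ at tension $\tau$, let $G$ be the isometry group of $\Omega$, and set $N=T^{-1}(T^{-1})^{\dagger}$, symmetric positive definite with eigenvalues the squared singular values of $T^{-1}$, so $\tr N=\|T^{-1}\|_2^2$ and $\tr(N^2)=\|T^{-1}\|_4^4$. For $g\in G$ the transplants $\psi_i^{(g)}=\phi_i\circ g^{\dagger}\circ T^{-1}$ lie in $H^2_0(T(\Omega))$ and satisfy $\int_{T(\Omega)}\psi_i^{(g)}\psi_j^{(g)}=|\det T|\,\delta_{ij}$ (because $g^{\dagger}(\Omega)=\Omega$); applying the clamped-plate identity on $T(\Omega)$ and changing variables $x=Tgy$, Rayleigh--Ritz gives for every $g$
\[ \Gamma_1+\dots+\Gamma_n\big|_{\widehat\tau,T(\Omega)}\le\sum_{i=1}^n\Bigl[\int_\Omega\tr\bigl((g^{\dagger}Ng\,D^2\phi_i)^2\bigr)+\widehat\tau\int_\Omega\langle g^{\dagger}Ng\,\nabla\phi_i,\nabla\phi_i\rangle\Bigr]. \]
Averaging these bounds over $g\in G$ is the decisive step.

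The gradient term averages cleanly: by irreducibility of $G$ (Schur's lemma, i.e.\ the classical tight $1$-frame property), $\frac1{|G|}\sum_g g^{\dagger}Ng=\tfrac{\tr N}{d}\Id$, so the averaged gradient part of term $i$ is $\widehat\tau\,\tfrac{\tr N}{d}\int_\Omega|\nabla\phi_i|^2$. For \eqref{plateweaker} I bound the fourth-order term by the trace inequality $\tr(MBMB)\le\tr(M^2B^2)$ for symmetric $M,B$ (equivalently $\|MB-BM\|_2^2\ge0$), applied with $M=g^{\dagger}Ng$, $B=D^2\phi_i$; since $(g^{\dagger}Ng)^2=g^{\dagger}N^2g$, averaging yields $\frac1{|G|}\sum_g\tr((g^{\dagger}Ng\,D^2\phi_i)^2)\le\tfrac{\tr(N^2)}{d}\|D^2\phi_i\|_2^2$ pointwise, and after integrating and reapplying the clamped-plate identity on $\Omega$ the right side becomes $\tfrac{\tr(N^2)}{d}\int_\Omega(\Delta\phi_i)^2=\tfrac{C(T^{-1})}{d}\int_\Omega(\Delta\phi_i)^2$. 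With $\widehat\tau=\tau\,C(T^{-1})/\|T^{-1}\|_2^2$ one has $\widehat\tau\,\tr N/d=\tau\,C(T^{-1})/d$, and the averaged Rayleigh quotients sum to $\frac{C(T^{-1})}{d}\sum_i\bigl(\int_\Omega(\Delta\phi_i)^2+\tau\int_\Omega|\nabla\phi_i|^2\bigr)=\frac{C(T^{-1})}{d}\sum_i\Gamma_i|_{\tau,\Omega}$, which is \eqref{plateweaker}.

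For the stronger bound, the hypothesis that $\Omega$ admits $2$-frames means, via the generalized tight $p$-frame theory of \autoref{pmatrix} (with $p=2$ and the matrix $T^{-1}$), that the averaged fourth-order tensor $\frac1{|G|}\sum_g(g^{\dagger}Ng)\otimes(g^{\dagger}Ng)$, restricted to symmetric arguments, is a combination of $\delta_{ab}\delta_{cd}$ and $\tfrac12(\delta_{ac}\delta_{bd}+\delta_{ad}\delta_{bc})$; its two coefficients are fixed by two trace contractions (giving $(\tr N)^2/d$ and $\tr(N^2)$). Substituting into $\tr((g^{\dagger}Ng\,D^2\phi_i)^2)$, integrating, and using the clamped-plate identity on $\Omega$, one gets $\frac1{|G|}\sum_g\int_\Omega\tr((g^{\dagger}Ng\,D^2\phi_i)^2)=\frac{(\tr N)^2+2\tr(N^2)}{d(d+2)}\int_\Omega(\Delta\phi_i)^2=\frac{D(T^{-1})}{d}\int_\Omega(\Delta\phi_i)^2$; combining with the gradient term and choosing $\widehat\tau=\tau\,D(T^{-1})/\|T^{-1}\|_2^2$ gives \eqref{platestronger}. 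The inequality $D(T^{-1})\le C(T^{-1})$ is exactly $(\tr N)^2\le d\,\tr(N^2)$, Cauchy--Schwarz. When $T=cO$ with $O$ orthogonal, $N=c^{-2}\Id$ commutes with everything, so the trace inequality and the Cauchy--Schwarz step become equalities, $C(T^{-1})=D(T^{-1})=dc^{-4}$, and (using isometry invariance to replace $cO(\Omega)$ by $c\,\Omega$ and the scaling $\Gamma_i|_{\tau c^{-2},c\,\Omega}=c^{-4}\Gamma_i|_{\tau,\Omega}$) both \eqref{plateweaker} and \eqref{platestronger} hold with equality.

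The step I expect to be the main obstacle is the exact evaluation of the group average $\frac1{|G|}\sum_g(g^{\dagger}Ng)\otimes(g^{\dagger}Ng)$ under the $2$-frame hypothesis and the identification of its contracted value with $D(T^{-1})$ --- this is precisely what the generalized tight $p$-frame theory is built to supply, and the two trace contractions must be bookkept carefully. A secondary, easily missed point is that for the weaker bound one must invoke the clamped-plate identity $\int|\Delta\cdot|^2=\int\|D^2\cdot\|_2^2$ \emph{before} transplanting: estimating $|\Delta v(x)|^2$ pointwise by $\|T^{-1}\|_4^4\,\|D^2\phi(T^{-1}x)\|_2^2$ and only then integrating loses a factor of $d$, whereas passing through $\tr((g^{\dagger}Ng\,D^2\phi)^2)\le\tr((g^{\dagger}Ng)^2(D^2\phi)^2)$ and averaging recovers the sharp constant $\|T^{-1}\|_4^4/d$.
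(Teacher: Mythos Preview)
Your approach is genuinely different from the paper's: you work in physical space with Hessians and the clamped--plate identity $\int|\Delta\psi|^2=\int\|D^2\psi\|_2^2$, whereas the paper works entirely in Fourier space, writing the quadratic form as $\int(|\xi|^4+\tau|\xi|^2)|\widehat u|^2\,d\xi$ and group--averaging the symbol $|T^{-\dagger}U^\dagger\xi|^{2p}$ directly. For the weak bound~\eqref{plateweaker} your argument is correct and rather elegant: the inequality $\tr(MBMB)\le\tr(M^2B^2)$ (from $\|[M,B]\|_2^2\ge0$) plays the role that Jensen's inequality for $\Phi(t)=t^2$ plays in the paper's \autoref{nontight}, and Schur's lemma then gives the sharp constant $\|T^{-1}\|_4^4/d$. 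This physical--space route is a nice alternative.

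For the strong bound~\eqref{platestronger}, however, there is a genuine gap. You assert that under the $2$-frame hypothesis the averaged tensor $\frac{1}{|G|}\sum_g(g^\dagger Ng)_{ab}(g^\dagger Ng)_{cd}$ lies in the span of $\delta_{ab}\delta_{cd}$ and $\tfrac12(\delta_{ac}\delta_{bd}+\delta_{ad}\delta_{bc})$. But the $2$-frame property, as defined in the paper, is the uniqueness of degree-$4$ $G$-invariant polynomials in a \emph{vector} variable $x$; equivalently, $\dim(\mathrm{Sym}^4\Rd)^G=1$. Your tensor lives in $\mathrm{Sym}^2(\mathrm{Sym}^2\Rd)=\mathrm{Sym}^4\Rd\oplus S^{(2,2)}\Rd$, and the $2$-frame property pins down only the $\mathrm{Sym}^4$ component (the contraction with $y_ay_by_cy_d$). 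For your claim you additionally need $\dim(S^{(2,2)}\Rd)^G=1$, which amounts to uniqueness of the $G$-invariant quadratic form on traceless symmetric matrices --- equivalent to irreducibility of the $G$-action on $\mathrm{Sym}^2_0(\Rd)$. This does hold for every concrete group listed in \autoref{pexists} (dihedral $I_2(n)$ with $n\ne4$, $H_3$, $F_4$, $H_4$, $E_k$, and $O(d)$), but it is not what \autoref{theoremframes} or \autoref{corframes} supply, and you have not argued it. The paper sidesteps the issue entirely: in Fourier space the object being averaged is $|T^{-\dagger}U^\dagger\xi|^4$, a degree-$4$ polynomial in $\xi$, so \autoref{corframes} with $p=2$ applies verbatim and yields $F_2(s^2(T^{-1}))|\xi|^4$ without ever needing to control the full $4$-tensor. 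The fix on your side is either to pass through Fourier space at this one step, or to add (and prove, case by case or otherwise) the irreducibility of $G$ on $\mathrm{Sym}^2_0(\Rd)$ as an auxiliary lemma.
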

 Note that we need to vary $\tau$ on the left of \eqref{plateweaker} and \eqref{platestronger} to keep equality for all $T$ of the form $c \Id$ (simple scaling). We prove this result in \autoref{secbilap}.

 The inequality $D(T^{-1})\le C(T^{-1})$ (with equality if and only if $T$ is a scalar multiple of an orthogonal matrix) follows from quadratic-arithmetic mean inequality. However, that does not mean that \autoref{platestronger} is sharper than \eqref{plateweaker} when $\tau>0$. We do have a smaller constant on the right, but we also have a smaller tension parameter on the left. The inequalities are also not comparable when $\tau<0$, since we might have negative eigenvalues. 

 Finally, not every irreducible isometry group (symmetric domain $\Omega$) admits $2$-frames (see \autoref{pexists}). In particular, squares do not admit $2$-frames. Therefore \autoref{platestronger} cannot be used to bound plate eigenvalues of rectangles. This somewhat disappointing limitation is not just an artifact of our method. In \autoref{platenumerical} we show that \eqref{platestronger} is simply wrong for rectangles. In the same section we compare our bounds with known results for elliptical and triangular plates.

 One can view this theorem as a starting point for various natural simplifications. We state the most interesting corollaries for the $2$-frame case, but similar results can be obtained for classical frames and/or more general Fourier multipliers. Analogous simplifications for Laplace eigenvalue problems were developed in \cite{LS11plane} and \cite{LS11}.

In a slight expense of accuracy, we can avoid changing $\tau$ by fixing volume of the transformed domain. We get:
\begin{corollary}\label{corstrongplate}
  Suppose $\tau=0$, or $|\det T|=1$ and $\tau>0$. If $\Omega$ admits $2$-frames then
  \begin{align*}
    \left.\Gamma_1+\dots+\Gamma_n \right|_{\tau,T(\Omega)}\le \left.\frac{D(T^{-1})}d(\Gamma_1+\dots+\Gamma_n) \right|_{\tau,\Omega}.
  \end{align*}
\end{corollary}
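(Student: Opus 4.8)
The plan is to derive this as a direct consequence of the stronger inequality \eqref{platestronger} in Theorem \ref{thmplate}, by exploiting the scaling freedom to absorb the awkward tension-rescaling factor $D(T^{-1})/\|T^{-1}\|_2^2$ appearing on the left-hand side. First I would observe that when $\tau=0$ the tension term disappears entirely from the eigenvalue problem, so \eqref{platestronger} reads $\Gamma_1+\dots+\Gamma_n|_{0,T(\Omega)}\le \tfrac{D(T^{-1})}{d}(\Gamma_1+\dots+\Gamma_n)|_{0,\Omega}$, which is exactly the claimed inequality with $\tau=0$; nothing further is needed in that case.

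For the case $|\det T|=1$ and $\tau>0$, the idea is to show that the rescaled tension parameter on the left of \eqref{platestronger} is bounded above by $\tau$, and then to argue monotonicity of the eigenvalues in $\tau$. Concretely, I would compute the ratio $D(T^{-1})/\|T^{-1}\|_2^2$. Writing $\sigma_1,\dots,\sigma_d$ for the singular values of $T^{-1}$, we have $\|T^{-1}\|_2^2=\sum_i\sigma_i^2$ and $\|T^{-1}\|_4^4=\sum_i\sigma_i^4$, so $D(T^{-1})=\bigl((\sum\sigma_i^2)^2+2\sum\sigma_i^4\bigr)/(d+2)$. The constraint $|\det T|=1$ gives $\prod_i\sigma_i=1$, hence by the AM–GM inequality $\sum_i\sigma_i^2\ge d\bigl(\prod\sigma_i^2\bigr)^{1/d}=d$. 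Using this together with $\sum\sigma_i^4\ge (\sum\sigma_i^2)^2/d$ (quadratic–arithmetic mean), one gets $D(T^{-1})\ge \bigl((\sum\sigma_i^2)^2+2(\sum\sigma_i^2)^2/d\bigr)/(d+2)=(\sum\sigma_i^2)^2/d\ge \|T^{-1}\|_2^2$. Therefore $D(T^{-1})/\|T^{-1}\|_2^2\ge 1$, so the tension parameter $\tau D(T^{-1})/\|T^{-1}\|_2^2$ on the left of \eqref{platestronger} is at least $\tau$.

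The remaining step is the monotonicity: since the eigenvalues $\Gamma_i$ of $\Delta^2-\tau\Delta$ with clamped conditions are given by a min–max over Rayleigh quotients $\bigl(\|\Delta u\|^2+\tau\|\nabla u\|^2\bigr)/\|u\|^2$, each $\Gamma_i$ is a nondecreasing function of $\tau$ when $\tau\ge 0$ (the quadratic form increases pointwise in $\tau$). Hence, with $\tau'=\tau D(T^{-1})/\|T^{-1}\|_2^2\ge\tau>0$,
\begin{align*}
  \Gamma_1+\dots+\Gamma_n\Big|_{\tau,T(\Omega)}\le \Gamma_1+\dots+\Gamma_n\Big|_{\tau',T(\Omega)}\le \frac{D(T^{-1})}{d}(\Gamma_1+\dots+\Gamma_n)\Big|_{\tau,\Omega},
\end{align*}
where the second inequality is \eqref{platestronger}. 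This is exactly the asserted bound.

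The main obstacle, modest as it is, is the inequality $D(T^{-1})\ge\|T^{-1}\|_2^2$ under the determinant-one constraint: one must be careful that it goes in the right direction (it genuinely uses $|\det T|=1$, and would fail for general $T$), and that the monotonicity argument requires $\tau\ge 0$ — which is precisely why the corollary is stated only for $\tau=0$ or $\tau>0$ with $|\det T|=1$, and fails to simplify for compression. I would also note, for completeness, that equality throughout forces $\sigma_1=\dots=\sigma_d=1$, i.e. $T$ orthogonal, consistent with the equality case in Theorem \ref{thmplate}.
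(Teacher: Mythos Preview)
Your proof is correct and follows exactly the natural line the paper has in mind: the paper omits the argument entirely, citing that it is ``essentially identical to the proof of \cite[Corollary 5]{LS11}'', and that referenced argument is precisely the combination of (i) the AM--GM bound $\|T^{-1}\|_2^2\ge d$ coming from $|\det T|=1$, (ii) the power-mean inequality $\|T^{-1}\|_4^4\ge \|T^{-1}\|_2^4/d$ giving $D(T^{-1})\ge \|T^{-1}\|_2^2$, and (iii) monotonicity of the Rayleigh quotient in the tension parameter. Your handling of the $\tau=0$ case and your observation about the equality case are also in line with the paper.
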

The proof is essentially identical to the proof of \cite[Corollary 5]{LS11} and will be omitted.

Instead of the constants involving matrix norms, we can also express the $2$-frame case in a more geometric way. Let $I_{2p}(\Omega)$ denote the polar $2p$-moment of mass of $\Omega$ around the origin
\begin{align*}
  I_{2p}(\Omega)=\int_\Omega |x|^{2p} \,dx.
\end{align*}
See \autoref{secmoments} for details. So that $I_2$ is the polar moment of inertia of the domain and $I_0=V$ is its volume.
Then \autoref{lemmoments} gives the following equivalent restatement of \autoref{corstrongplate}.
\begin{theorem}\label{plateiso}
  Suppose $\Omega$ admits $2$-frames. If $\tau=0$ or $|\det T|=1$ and $\tau>0$, then
  \begin{align*}
    (\Gamma_1+\dots+\Gamma_n)V^{4/d} \Big|_{\tau,T(\Omega)}\cdot \left.\frac{V^{1+4/d}}{I_4}\right|_{T^{-1}(\Omega)}
  \end{align*}
  is maximal when $T$ is a scalar multiple of an orthogonal matrix.
\end{theorem}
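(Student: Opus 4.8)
The plan is to derive \autoref{plateiso} as a direct consequence of \autoref{corstrongplate} by rewriting the matrix-norm constant $D(T^{-1})/d$ in terms of the polar moments $I_2$ and $I_4$ of $T^{-1}(\Omega)$. The key input is \autoref{lemmoments} (referenced but not yet stated in the excerpt), which must relate the Schatten norms $\|T^{-1}\|_2$ and $\|T^{-1}\|_4$ to the moments of the linearly transformed domain. Concretely, if $\Omega$ admits $2$-frames, then integrating $|Sx|^2$ and $|Sx|^4$ over $\Omega$ against the frame structure should give $I_2(S(\Omega)) = \frac{\|S\|_2^2}{d} I_2(\Omega)$ and $I_4(S(\Omega)) = \frac{\|S\|_2^4 + 2\|S\|_4^4}{d(d+2)} I_4(\Omega)$, the latter being exactly $D(S)/d$ times $I_4(\Omega)$ after one accounts for the fact that on a $2$-frame-admitting symmetric domain $\Omega$ one has the isotropy relation $\int_\Omega x_ix_j\,dx = \frac{I_2(\Omega)}{d}\delta_{ij}$ and a corresponding fourth-moment identity $\int_\Omega x_ix_jx_kx_\ell\,dx = \frac{I_4(\Omega)}{d(d+2)}(\delta_{ij}\delta_{k\ell}+\delta_{ik}\delta_{j\ell}+\delta_{i\ell}\delta_{jk})$. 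Taking $S = T^{-1}$ then yields $D(T^{-1})/d = \dfrac{I_4(T^{-1}(\Omega))}{I_4(\Omega)}\Big/\dfrac{|\det T^{-1}|^{1+4/d}\,\text{(normalization)}}{\cdots}$ — in other words, $D(T^{-1})/d$ is, up to $\Omega$-only constants, the dimensionless ratio $\dfrac{V^{1+4/d}}{I_4}\Big|_{T^{-1}(\Omega)}$ divided by the same quantity on $\Omega$.

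First I would invoke \autoref{corstrongplate}: under the stated hypotheses ($\tau=0$, or $|\det T|=1$ and $\tau>0$),
\begin{align*}
  \left.\Gamma_1+\dots+\Gamma_n\right|_{\tau,T(\Omega)} \le \frac{D(T^{-1})}{d}\left.(\Gamma_1+\dots+\Gamma_n)\right|_{\tau,\Omega}.
\end{align*}
Next I would use \autoref{lemmoments} to write $\dfrac{D(T^{-1})}{d} = c_\Omega \cdot \dfrac{I_4(T^{-1}(\Omega))}{V(T^{-1}(\Omega))^{1+4/d}}$, where $c_\Omega = \dfrac{V(\Omega)^{1+4/d}}{I_4(\Omega)}$ depends only on $\Omega$; here the volume factor $V(T^{-1}(\Omega)) = |\det T|^{-1} V(\Omega)$ is used to make the expression scale-invariant. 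Multiplying the corollary's inequality through by $V(\Omega)^{4/d}$ and rearranging, the product
\begin{align*}
  (\Gamma_1+\dots+\Gamma_n)V^{4/d}\Big|_{\tau,T(\Omega)} \cdot \left.\frac{V^{1+4/d}}{I_4}\right|_{T^{-1}(\Omega)}
\end{align*}
is bounded above by a constant depending only on $\Omega$, $\tau$, and $n$ — namely $c_\Omega^{-1}\cdot (\Gamma_1+\dots+\Gamma_n)|_{\tau,\Omega}\cdot V(\Omega)^{4/d}$. Finally I would check the equality case: when $T = c\,\mathbb{I}\mathrm{d}$ (so $T^{-1}$ is also a scalar multiple of the identity), both $D(T^{-1})/d$ collapses to the value that makes \autoref{corstrongplate} an equality and the dimensionless ratio $V^{1+4/d}/I_4$ is invariant, so the product attains the bound; hence the maximum is realized exactly at scalar multiples of orthogonal matrices, using also that the tension hypothesis is preserved ($|\det(c\,\mathbb{I}\mathrm{d})|=1$ forces $|c|=1$, consistent with $\tau>0$, or else $\tau=0$).

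The main obstacle is establishing (or correctly citing) the fourth-moment identity for $2$-frame-admitting domains and tracking the dimensional constants $d$ and $d+2$ precisely — this is exactly the content \autoref{secmoments} and \autoref{lemmoments} are designed to supply, so the work here is bookkeeping rather than conceptual. A secondary subtlety is confirming that the substitution $S = T^{-1}$ and the volume identity $V(T^{-1}(\Omega)) = |\det T^{-1}|\,V(\Omega)$ combine to cancel all $\det T$ factors, leaving a genuinely scale-invariant (dimensionless) product; this is why the hypothesis is phrased either with $\tau=0$ (pure fourth-order scaling) or $|\det T|=1$ (volume-preserving), matching the normalization already used in \autoref{corstrongplate}. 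Since all the analytic content is already packaged in \autoref{corstrongplate} and the geometric content in \autoref{lemmoments}, the proof of \autoref{plateiso} is essentially a translation and can be stated briefly.
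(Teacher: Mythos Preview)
Your approach is correct and matches the paper's exactly: \autoref{plateiso} is presented there as an equivalent restatement of \autoref{corstrongplate} via \autoref{lemmoments}, precisely the translation you outline. One bookkeeping correction: applying \autoref{lemmoments} with $p=2$ and $T\to T^{-1}$ gives $\dfrac{D(T^{-1})}{d}=F_2(s^2(T^{-1}))=\dfrac{I_4(T^{-1}(\Omega))}{I_4(\Omega)}\cdot\dfrac{V(\Omega)^{1+8/d}}{V(T(\Omega))^{4/d}\,V(T^{-1}(\Omega))^{1+4/d}}$, which carries an extra factor $V(\Omega)^{4/d}/V(T(\Omega))^{4/d}$ compared to what you wrote, so you should multiply the inequality from \autoref{corstrongplate} through by $V(T(\Omega))^{4/d}$ rather than $V(\Omega)^{4/d}$; with that fix all $\det T$ factors cancel in both the $\tau=0$ and the $|\det T|=1$ cases, and the argument goes through as you describe.
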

Note that both factors above are scale invariant, and it is not possible to combine them into one simpler quantity on a single domain (see the discussion in \cite[Section III.B]{LS11}). When $n=1$ and $\tau=0$, minimization of the first factor (evaluated on $T(\Omega)$) reduces to the classical Rayleigh conjecture for the lowest plate eigenvalue, and balls should be the minimizers among all domains. However this is only known in dimensions $2$ (Nadirashvili \cite{N95}) and $3$ (Ashbaugh-Benguria \cite{AB95}). In higher dimensions partial results were obtained by Talenti \cite{T82} and Ashbaugh-Laugesen \cite{AL96}. The second, purely geometric factor in \autoref{plateiso} is maximal for balls, hence it can be viewed as a compensating factor, turning lower isoperimetric bound into a more general upper bound. 

  Finally, a simple relation between moments of mass of $T(\Omega)$ and $T^{-1}(\Omega)$ in two dimensions, see \autoref{momentstwo}, gives:

  \begin{corollary}\label{corplateiso}
  If planar $\Omega$ admits $2$-frames then \autoref{plateiso} simplifies to the following:
  \begin{align*}
    \left.(\Gamma_1+\dots+\Gamma_n)A^2\cdot \frac{A^3}{I_4}\right|_{\tau,T(\Omega)}
  \end{align*}
  is maximal when $T$ is a scalar multiple of an orthogonal matrix.

  In particular, among triangles, the quantity above is maximal for equilateral triangles. Similarly, among ellipses, the balls are maximizers.
\end{corollary}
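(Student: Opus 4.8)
The plan is to specialize \autoref{plateiso} to $d=2$ and then trade the factor evaluated on $T^{-1}(\Omega)$ for one evaluated on $T(\Omega)$; that trade is exactly the two-dimensional moment relation \autoref{momentstwo}. In dimension two $V=A$, so $V^{4/d}=A^{2}$ and $V^{1+4/d}=A^{3}$, and \autoref{plateiso} asserts that
\begin{align*}
  (\Gamma_1+\dots+\Gamma_n)A^{2}\,\Big|_{\tau,T(\Omega)}\cdot\left.\frac{A^{3}}{I_4}\right|_{T^{-1}(\Omega)}
\end{align*}
is maximal precisely when $T$ is a scalar multiple of an orthogonal matrix, under the standing hypotheses of \autoref{plateiso} ($\tau=0$, or $|\det T|=1$ and $\tau>0$). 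So it suffices to show that the scale-invariant ratio $A^{3}/I_4$ takes the same value on $T(\Omega)$ and on $T^{-1}(\Omega)$ whenever $\Omega$ admits $2$-frames.

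To prove that, I would use the $2$-frame moment identity behind \autoref{lemmoments}: writing $M=T^{\dagger}T$, it gives $I_4(T(\Omega))=|\det T|\,\frac{I_4(\Omega)}{d(d+2)}\bigl[(\tr M)^{2}+2\tr(M^{2})\bigr]$, while $A(T(\Omega))=|\det T|\,A(\Omega)$. With $\lambda_1,\lambda_2$ the singular values of $T$ we have $\tr M=\lambda_1^{2}+\lambda_2^{2}$, $\tr(M^{2})=\lambda_1^{4}+\lambda_2^{4}$ and $|\det T|=\lambda_1\lambda_2$, so a short computation gives
\begin{align*}
  \frac{I_4(T(\Omega))}{A(T(\Omega))^{3}}=\frac{I_4(\Omega)}{8\,A(\Omega)^{3}}\left(3\left(\frac{\lambda_1^{2}}{\lambda_2^{2}}+\frac{\lambda_2^{2}}{\lambda_1^{2}}\right)+2\right).
\end{align*}
The singular values of $T^{-1}$ are $\lambda_1^{-1},\lambda_2^{-1}$, and the right-hand factor is manifestly unchanged when each $\lambda_i$ is replaced by $\lambda_i^{-1}$; hence $A^{3}/I_4$ does agree on $T(\Omega)$ and $T^{-1}(\Omega)$. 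Substituting this equality into the display above collapses both factors onto $T(\Omega)$ and leaves $(\Gamma_1+\dots+\Gamma_n)A^{2}\cdot\frac{A^{3}}{I_4}\,\big|_{\tau,T(\Omega)}$, still maximal exactly when $T$ is a scalar multiple of an orthogonal matrix.

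For the two explicit consequences I would argue as follows. After centering at the centroid, every nondegenerate triangle is the image $T(\Omega_0)$ of a fixed equilateral triangle $\Omega_0$ under an invertible linear map: matching the vertices of $\Omega_0$ to those of the target triangle determines $T$, and the common vanishing of the two centroids keeps $T(\Omega_0)$ a genuine triangle. Being a regular polygon other than a square, $\Omega_0$ admits $2$-frames (see \autoref{pexists}), and the scalar multiples of orthogonal matrices carry $\Omega_0$ to equilateral triangles; applying the corollary with $\Omega=\Omega_0$ identifies equilateral triangles as the maximizers among triangles. The ellipse case is identical, with $\Omega_0$ the unit disk---whose isometry group $O(2)$ certainly admits $2$-frames---and scalar multiples of orthogonal matrices sending disks to disks. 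When $\tau>0$ the constraint $|\det T|=1$ restricts the comparison to triangles, resp.\ ellipses, of one fixed area; when $\tau=0$ the whole quantity is scale invariant and no such restriction is needed.

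The only step with genuine content is \autoref{momentstwo}: one needs the $2$-frame moment identity in hand and must keep the centroid and the origin aligned, after which the statement reduces to the elementary observation that the symmetric function of the singular values appearing in $A^{3}/I_4$ depends only on $\lambda_1/\lambda_2$ and is therefore invariant under passing from $T$ to $T^{-1}$---a coincidence special to two dimensions. Everything else is routine bookkeeping, the only mild care being the area normalization in the tension case.
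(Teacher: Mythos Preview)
Your proof is correct and follows the same route as the paper: specialize \autoref{plateiso} to $d=2$ and then invoke the two-dimensional identity $\left.\frac{A^{3}}{I_4}\right|_{T(\Omega)}=\left.\frac{A^{3}}{I_4}\right|_{T^{-1}(\Omega)}$ (\autoref{momentstwo}) to collapse both factors onto $T(\Omega)$. The only difference is cosmetic: the paper derives \autoref{momentstwo} from the Schatten-norm relation $F_p(s^2(T^{-1}))\,|\det T|^{2p}=F_p(s^2(T))$, while you compute $I_4/A^3$ directly in terms of the singular values and observe its invariance under $\lambda_i\mapsto\lambda_i^{-1}$; these are the same calculation in slightly different packaging.
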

Note that square might be a maximizer among all parallelograms in the above corollary, even though \autoref{platestronger} does not hold, since the corollary is weaker than \eqref{platestronger}. Indeed, neither $I_4$ nor eigenvalues for squares can be estimated using $2$-frames, and there could be some case-specific cancellation. 

Finally, \autoref{platestronger} extends to a related plate buckling problem (see \autoref{secbuckling} for detailed exposition). The goal is to find a critical value $\Lambda$ of the tension parameter $\tau$, that forces plate to buckle. The simplest, two-dimensional version of \autoref{thmbuckling} reads
\begin{theorem}
If planar $\Omega$ admits $2$-frames, then the buckling eigenvalue $\Lambda$ satisfies: 
  \begin{align*}
    \Lambda A \cdot\frac{AI_2}{I_4}\Big|_{T(\Omega)}
  \end{align*}
  is maximal when $T$ is a scalar multiple of an orthogonal matrix.
\end{theorem}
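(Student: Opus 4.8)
The plan is to imitate the proof of the $2$-frame bound \eqref{platestronger} in \autoref{thmplate}, applied to the buckling Rayleigh quotient, and then to convert the resulting Schatten-norm constant into polar moments using the two-dimensional identities behind \autoref{corplateiso}. Recall that the (first) buckling eigenvalue is characterized variationally by
\begin{align*}
  \Lambda\big|_{T(\Omega)}=\inf_{0\ne v\in H^2_0(T(\Omega))}\frac{\int_{T(\Omega)}(\Delta v)^2}{\int_{T(\Omega)}|\nabla v|^2}.
\end{align*}
I would fix a buckling eigenfunction $u$ on $\Omega$; for each isometry $g$ of $\Omega$ the function $v_g:=u\circ(g^{-1}T^{-1})$ is an admissible trial function on $T(\Omega)$, because $g(\Omega)=\Omega$. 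Writing $M:=(T^\dagger T)^{-1}$ and $M_g:=g^\dagger M g$, the change of variables $x=Tgy$ gives
\begin{align*}
  \int_{T(\Omega)}|\nabla v_g|^2=|\det T|\int_\Omega\langle M_g\nabla u,\nabla u\rangle,\qquad
  \int_{T(\Omega)}(\Delta v_g)^2=|\det T|\int_\Omega\bigl(\tr(M_g\,D^2u)\bigr)^2,
\end{align*}
and, $M$ being symmetric and positive, its singular values are the squares of those of $T^{-1}$, so $\tr M=\|T^{-1}\|_2^2$ and $\tr M^2=\|T^{-1}\|_4^4$.

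Next I would average over the isometry group $G$ of $\Omega$. Irreducibility (which underlies the $1$-frame, hence also the $2$-frame hypothesis, see \autoref{pexists}) gives $\frac1{|G|}\sum_g M_g=\frac{\tr M}{d}\,\Id$, which takes care of the denominator. For the numerator I would invoke \autoref{pmatrix}/\autoref{subspmatrix}: conjugating $M$ by a group generating a tight $2$-frame turns $\frac1{|G|}\sum_g\tr(M_gB)\,\tr(M_gC)$ into an explicit isotropic bilinear form in symmetric $B,C$ whose coefficients involve only $\tr M$ and $\tr M^2$. Applying this with $B=C=D^2u$, integrating over $\Omega$, and using the clamped identity $\int_\Omega(\Delta u)^2=\int_\Omega|D^2u|^2$ (valid for $u\in H^2_0$) makes the $(\Delta u)^2$ and $|D^2u|^2$ contributions collapse exactly as in \eqref{platestronger}, to the constant $D(T^{-1})/d$. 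Since $\Lambda|_{T(\Omega)}\int_{T(\Omega)}|\nabla v_g|^2\le\int_{T(\Omega)}(\Delta v_g)^2$ holds for every individual $g$, the inequality survives averaging, and together with $\int_\Omega(\Delta u)^2=\Lambda|_\Omega\int_\Omega|\nabla u|^2$ this yields
\begin{align*}
  \Lambda\big|_{T(\Omega)}\le\frac{D(T^{-1})}{\|T^{-1}\|_2^2}\,\Lambda\big|_\Omega.
\end{align*}

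It then remains to rewrite this geometrically in dimension two. By \autoref{lemmoments}, for $\Omega$ admitting $2$-frames one has $A(T\Omega)=|\det T|\,A(\Omega)$, $I_2(T\Omega)=|\det T|\,\|T\|_2^2\,I_2(\Omega)/d$ and $I_4(T\Omega)=|\det T|\,(\|T\|_2^4+2\|T\|_4^4)\,I_4(\Omega)/(d(d+2))$, while \autoref{momentstwo} supplies, for $d=2$, the relations $\|T^{-1}\|_2^2=\|T\|_2^2/|\det T|^2$, $\|T^{-1}\|_4^4=\|T\|_4^4/|\det T|^4$ and $\|T\|_4^4=\|T\|_2^4-2|\det T|^2$. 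Feeding these into the displayed eigenvalue bound, a short computation shows that every power of $|\det T|$, of $\|T\|_2^2$ and of $\|T\|_2^4+2\|T\|_4^4$ cancels, so that the scale-invariant product $\Lambda A\cdot\frac{AI_2}{I_4}$ on $T(\Omega)$ is bounded above by its value on $\Omega$. Equality in the Rayleigh-quotient step holds when $T$ is a scalar multiple of an orthogonal matrix (then each $v_g$ is, up to scaling and the isometry $g$, the genuine buckling eigenfunction of $T(\Omega)$), and for such $T$ the product on $T(\Omega)$ indeed equals the product on $\Omega$ by scale- and isometry-invariance; this proves the asserted maximality.

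The main obstacle is bookkeeping rather than conceptual. One must carry out the $2$-frame averaging of the numerator precisely enough to see the collapse to $D(T^{-1})/d$, which hinges on combining the exact tight-$2$-frame coefficients of \autoref{pmatrix} with the clamped-boundary identity $\int(\Delta u)^2=\int|D^2u|^2$; and one must assemble the two-dimensional moment identities of \autoref{lemmoments} and \autoref{momentstwo} so that all $T$-dependence cancels in the final product. For the sum $\Lambda_1+\dots+\Lambda_n$ (the general \autoref{thmbuckling}) the averaging of a single Rayleigh quotient is no longer available; there one runs instead the Gram-matrix Rayleigh--Ritz argument from the proof of \eqref{platestronger} (cf.\ \cite[Corollary~5]{LS11}), exploiting that the averaged denominator Gram matrix of the trial functions $v_i^g$ is a scalar multiple of the identity by irreducibility.
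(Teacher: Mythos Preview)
Your route differs from the paper's in one structural choice: the paper works entirely on the Fourier side (\autoref{secbuckling}), writing the buckling Rayleigh quotient as $\int|\xi|^4|\widehat u|^2\,d\xi\big/\int|\xi|^2|\widehat u|^2\,d\xi$. After transplanting $u\circ U\circ T^{-1}$ the numerator and denominator become integrals of $|T^{-\dagger}U^\dagger\xi|^4|\widehat u|^2$ and $|T^{-\dagger}U^\dagger\xi|^2|\widehat u|^2$; averaging over $U\in G$ is then exactly \autoref{corframes} with $p=2$ and $p=1$, giving $F_2(s^2(T^{-1}))|\xi|^4$ and $\tfrac1d\|T^{-1}\|_2^2|\xi|^2$ directly. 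The conversion to moments via \autoref{lemmoments} and \autoref{momentstwo} is thereafter the same as yours, so you land on the same intermediate inequality $\Lambda|_{T(\Omega)}\le D(T^{-1})\|T^{-1}\|_2^{-2}\,\Lambda|_\Omega$.

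Your physical-space computation, however, has a gap at the numerator step. The results you invoke from \autoref{pmatrix}/\autoref{subspmatrix} (\autoref{theoremframes}, \autoref{corframes}) evaluate $\tfrac1{|G|}\sum_g\langle M,P_{gy}\rangle_{HS}^2$ only for \emph{rank-one} $P_{gy}$, equivalently $\tfrac1{|G|}\sum_g|Sgy|^4$ for a vector $y$. What you need is $\tfrac1{|G|}\sum_g\langle M_g,B\rangle_{HS}^2$ for an arbitrary symmetric $B=D^2u$, and the paper never proves that. The statement is true in dimension two under the $2$-frame hypothesis (the conjugation action of $G$ on traceless symmetric $2\times2$ matrices is again irreducible, so by Schur the $G$-invariant quadratic forms on $\mathrm{Sym}(2)$ are spanned by $(\tr B)^2$ and $\tr(B^2)$, and specialising to $B=P_y$ shows the two coefficients sum to $F_2(\sigma(M))=D(T^{-1})/d$), but you would have to supply that argument and then combine it with the clamped identity $\int(\Delta u)^2=\int|D^2u|^2$. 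The Fourier route bypasses all of this because only the vector $\xi$ ever appears, so rank-one projections suffice and no Hessian or integration-by-parts identity is required.
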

Again, the first (scale invariant) factor should be minimal for balls among all domains, but this P\'olya-Szeg\"o conjecture is still open.
\subsection{Bochner subordinators}
We also prove a general result for sums of eigenvalues of the generators of arbitrary subordinated Brownian motions. More precisely, for any complete Bernstein function $\Psi$ (see \cite{SSV12}), one defines a semigroup associated with a subordinated Brownian motion with generator $\Psi(-\Delta)$. Here we state theorems for the most often used cases, while proofs and general results are contained in \autoref{sec:fractional}. 

Perhaps the best known example of a subordinator is an $\alpha/2$-stable subordinator $\Psi(t)=t^{\alpha/2}$ with $0<\alpha<2$. The resulting operator is the fractional Laplacian $(-\Delta)^{\alpha/2}$ on a domain $\Omega$ with Dirichlet boundary condition outside of $\Omega$. This operator generates a semigroup of the symmetric $\alpha$-stable process killed while exiting $\Omega$. It is worth noting that this operator is not the same as the $\alpha/2$-power of the Dirichlet Laplacian. Hence its spectrum is not just a power of the Dirichlet spectrum (see Chen-Song \cite{CS05}).

The spectral theory of the fractional Laplacian is of great interest. The smallest eigenvalue of this operator satisfies Rayleigh-Faber-Krahn isoperimetric inequality (Ba\~nuelos \textit{et al.} \cite{BLM01}). More precisely, among all domains with fixed area/volume, ball has the least smallest eigenvalue. Various spectral properties of the fractional Laplacian have been studied by e.g. Ba\~nuelos-Kulczycki \cite{BK06,BK08}, Chen-Song \cite{CS05,CS06}, Frank-Geisinger \cite{FG11}, Yolcu-Yolcu \cite{YY12,YY13}. For a comprehensive overview see \cite[Chapter 4]{BBKRSV}. Numerical results have also been obtained: for an interval by Kulczycki \textit{et al.} \cite{KKMS10}, for a ball by Dyda \cite{D12}. Clearly, it is not possible to find exact eigenvalues for balls, or even intervals.

As in the plate case, we prove bounds for sums of eigenvalues involving norms of the transformation $T$. Here we only state the simplified isoperimetric upper bound (cf. \autoref{plateiso})
\begin{theorem}\label{thmfractional}
  (Fractional Laplacian) For any $\Omega$ with irreducible isometry group and $0<\alpha\le 2$, the eigenvalues of $(-\Delta)^{\alpha/2}$ satisfy:
  \begin{align*}
    \left.\left(\lambda_1+\dots+\lambda_n\right)^{2/\alpha} V^{2/d}\right|_{\alpha,T(\Omega)}\cdot \left.\frac{V^{1+2/d}}{I_2}\right|_{T^{-1}(\Omega)}
  \end{align*}
  is maximal when $T$ is a scalar multiple of an orthogonal matrix.
\end{theorem}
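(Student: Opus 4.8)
The plan is to derive \autoref{thmfractional} as the isoperimetric reformulation of a norm-based sum-of-eigenvalues bound, exactly parallel to how \autoref{plateiso} was obtained from \autoref{corstrongplate} via \autoref{lemmoments}. First I would invoke the general Fourier multiplier machinery: the fractional Laplacian $(-\Delta)^{\alpha/2}$ has symbol $|\xi|^\alpha$, which is the $\alpha/2$-power of the quadratic form $|\xi|^2$. Since $\Omega$ has irreducible isometry group, its symmetry group generates a classical tight $1$-frame, and conjugating by $T^{-1}$ (using the rectangular/square matrix version of the frame identity from \autoref{pmatrix}, specifically the result that conjugation of a projection or arbitrary matrix yields a generalized tight $p$-frame with constant governed by the singular values) lets me compare the Rayleigh quotients on $T(\Omega)$ with those on $\Omega$. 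Concretely, pulling back via $T$ turns eigenfunctions on $T(\Omega)$ into competitors on $\Omega$, and averaging the transformed symbol $|T^{-\dagger}\xi|^\alpha$ over the isometry group of $\Omega$ replaces it (by the $1$-frame identity applied to the quadratic form $|T^{-\dagger}\xi|^2$, then raising to the $\alpha/2$ power using concavity/monotonicity since $0<\alpha\le 2$) by $\bigl(\tfrac{\|T^{-1}\|_2^2}{d}\bigr)^{\alpha/2}|\xi|^\alpha$. This gives $\lambda_1+\dots+\lambda_n|_{T(\Omega)} \le \bigl(\tfrac{\|T^{-1}\|_2^2}{d}\bigr)^{\alpha/2}(\lambda_1+\dots+\lambda_n)|_{\Omega}$, which is the analogue of \eqref{plateweaker} in the $\alpha/2$-stable case.

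Next I would remove the matrix norms in favor of geometry. Raising the displayed bound to the power $2/\alpha$ gives $(\lambda_1+\dots+\lambda_n)^{2/\alpha}|_{T(\Omega)} \le \tfrac{\|T^{-1}\|_2^2}{d}\,(\lambda_1+\dots+\lambda_n)^{2/\alpha}|_{\Omega}$. By \autoref{lemmoments} (the moment identity linking $\|T^{-1}\|_2^2$ to the polar moment of inertia), $\tfrac{\|T^{-1}\|_2^2}{d}$ equals $I_2(T^{-1}(\Omega))/I_2(\Omega)$ after normalizing volume, i.e. $\|T^{-1}\|_2^2/d = \bigl(V^{1+2/d}/I_2\bigr)^{-1}_\Omega \cdot \bigl(V^{1+2/d}/I_2\bigr)_{T^{-1}(\Omega)}$ once we also track the volume change $|\det T|$. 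Rescaling $T$ to have $|\det T|=1$ (harmless since every quantity in the final statement is scale invariant — note $(\lambda_1+\dots+\lambda_n)^{2/\alpha}$ scales like length${}^{-2}$ and $V^{2/d}$ like length${}^2$, so their product is scale invariant, and likewise $V^{1+2/d}/I_2$), the inequality becomes precisely
\begin{align*}
  \left.\left(\lambda_1+\dots+\lambda_n\right)^{2/\alpha} V^{2/d}\right|_{T(\Omega)}\cdot \left.\frac{V^{1+2/d}}{I_2}\right|_{T^{-1}(\Omega)} \le \left.\left(\lambda_1+\dots+\lambda_n\right)^{2/\alpha} V^{2/d}\right|_{\Omega}\cdot \left.\frac{V^{1+2/d}}{I_2}\right|_{\Omega},
\end{align*}
with the right side being the value at $T$ orthogonal (times a scalar), which proves maximality. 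Equality for scalar multiples of orthogonal matrices is immediate since then $T^{-\dagger}\xi$ is just a rotated rescaling of $\xi$ and all the frame averaging is exact.

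The main obstacle is the interplay between the $\alpha/2$-power and the frame averaging: the $1$-frame identity is a statement about the quadratic form $|T^{-\dagger}\xi|^2$, but the multiplier we actually need to control is $|T^{-\dagger}\xi|^\alpha$, a \emph{non-quadratic} function of $T$ when $\alpha\ne 2$. The right way to handle this is to first write the fractional operator via its quadratic-form-valued building block and then apply Jensen/concavity in the averaging step: for $0<\alpha\le 2$ the map $t\mapsto t^{\alpha/2}$ is concave, so averaging $|T^{-\dagger}g\xi|^\alpha = (|T^{-\dagger}g\xi|^2)^{\alpha/2}$ over $g$ in the isometry group is bounded above by $\bigl(\text{average of } |T^{-\dagger}g\xi|^2\bigr)^{\alpha/2} = \bigl(\tfrac{\|T^{-1}\|_2^2}{d}|\xi|^2\bigr)^{\alpha/2}$; this is exactly where the restriction $\alpha\le 2$ enters and why only the $1$-frame (not a higher $p$-frame) constant appears. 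One must also be careful that this averaging is legitimate at the level of Rayleigh quotients / quadratic forms of the nonlocal operator — this is handled by the general multiplier theorem \autoref{thmgeneralmult}, whose Dirichlet form representation in Fourier variables makes the group-averaging of the symbol a genuine inequality between quadratic forms, so I would ultimately just cite that theorem and specialize $\Psi(t)=t^{\alpha/2}$.
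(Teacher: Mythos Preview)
Your approach is essentially the same as the paper's: specialize the concave-multiplier bound (\autoref{thmconcave}, itself a case of \autoref{thmgeneralmult}) to $\Psi(t)=t^{\alpha/2}$, use Jensen and the tight $1$-frame identity to obtain $\lambda_1+\dots+\lambda_n|_{T(\Omega)} \le (\|T^{-1}\|_2^2/d)^{\alpha/2}(\lambda_1+\dots+\lambda_n)|_{\Omega}$, then invoke \autoref{lemmoments} with $p=1$ to convert $\|T^{-1}\|_2^2/d$ into the moment ratio $V^{1+2/d}/I_2$ on $T^{-1}(\Omega)$ and arrive at the scale-invariant isoperimetric form.

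Two small slips to clean up: first, the trial-function direction is reversed in your description --- you take eigenfunctions $u_i$ on $\Omega$ and transplant them as $u_i\circ U\circ T^{-1}$ onto $T(\Omega)$ (not the other way around), which is why the transformed symbol $|T^{-\dagger}U^\dagger\xi|^\alpha$ appears and why you get an \emph{upper} bound on $T(\Omega)$; your computation and final inequality are nonetheless correct. Second, your displayed identity for $\|T^{-1}\|_2^2/d$ in terms of $(V^{1+2/d}/I_2)$ has the ratio inverted; with $|\det T|=1$ \autoref{lemmoments} gives $\|T^{-1}\|_2^2/d = I_2(T^{-1}(\Omega))/I_2(\Omega)$, and that is what makes the final inequality come out as stated.
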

The special case $\alpha=2$ reduces to \cite[Corollary 2]{LS11} (upper bound for the sums of Dirichlet eigenvalues of the classical Laplacian). As in the plate case we get a significantly simpler statement in two dimensions (cf. \autoref{corplateiso}).
\begin{corollary} In dimension 2
  \begin{align*}
    \left.\left(\lambda_1+\dots+\lambda_n\right)^{2/\alpha}A\cdot \frac{A^2}{I_2}\right|_{\alpha,T(\Omega)}
  \end{align*}
  is maximal when $T$ is scalar multiple of an orthogonal matrix.

  In particular, disk is the extremizer among ellipses, square among parallelograms and eqiulateral among all triangles.
\end{corollary}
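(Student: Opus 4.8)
The plan is to read this corollary off from \autoref{thmfractional} with $d=2$, exactly as \autoref{corplateiso} was read off from \autoref{plateiso}. When $d=2$ one has $V=A$, $V^{2/d}=A$ and $V^{1+2/d}=A^{2}$, so \autoref{thmfractional} already asserts that $\left(\lambda_1+\dots+\lambda_n\right)^{2/\alpha}A\big|_{\alpha,T(\Omega)}\cdot\frac{A^{2}}{I_2}\big|_{T^{-1}(\Omega)}$ is maximal precisely when $T\in\Real\cdot O(2)$. So the only thing to check is that the purely geometric factor is unchanged when evaluated on $T(\Omega)$ instead of on $T^{-1}(\Omega)$, i.e.\ that $\frac{A^{2}}{I_2}\big|_{T^{-1}(\Omega)}=\frac{A^{2}}{I_2}\big|_{T(\Omega)}$.

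For this I would use that, since the isometry group of $\Omega$ is irreducible (the standing assumption in \autoref{thmfractional}), Schur's lemma makes the inertia tensor $\int_\Omega xx^{\dagger}\,dx$ a scalar matrix $c\,\Id$; this is the order-two moment identity underlying \autoref{lemmoments}, and it gives $I_2(T(\Omega))=|\det T|\,c\,\|T\|_2^2$ for every invertible $T$. Applying this to $T$ and to $T^{-1}$, and using that in dimension two the singular values $\sigma_1,\sigma_2$ of $T$ satisfy $\|T^{-1}\|_2^2=\sigma_1^{-2}+\sigma_2^{-2}=(\sigma_1\sigma_2)^{-2}\|T\|_2^2=(\det T)^{-2}\|T\|_2^2$, I obtain $I_2(T^{-1}(\Omega))=|\det T|^{-4}I_2(T(\Omega))$, while $|T^{-1}(\Omega)|=|\det T|^{-1}|\Omega|$ gives $A(T^{-1}(\Omega))^2=|\det T|^{-4}A(T(\Omega))^2$. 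The two factors $|\det T|^{-4}$ cancel, which yields the desired equality; this is also what the two-dimensional moment relation \autoref{momentstwo} records, and it is worth stressing that only irreducibility enters here, not the $2$-frame hypothesis needed by \autoref{corplateiso} for its $I_4$. Plugging the equality into \autoref{thmfractional} proves the displayed statement.

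For the ``in particular'' claims I would note that every ellipse is a linear image of the unit disk, every parallelogram a linear image of the unit square, and every triangle an affine image of the equilateral triangle, and that the disk, square and equilateral triangle have irreducible planar isometry groups ($O(2)$ and the dihedral groups of orders $8$ and $6$ --- a $90^{\circ}$ or $120^{\circ}$ rotation leaves no line invariant). Since the eigenvalues of $(-\Delta)^{\alpha/2}$ are translation invariant and, with $I_2$ taken about the center of the model domain (which every linear map preserves), the whole functional is translation invariant, the triangle case reduces to the linear one. The main statement then identifies the maximizer as the case $T\in\Real\cdot O(2)$, that is, the disk among ellipses, the square among parallelograms, and the equilateral triangle among triangles.

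I do not expect a genuine obstacle: the argument rests only on the scalar form of the inertia tensor and on the elementary identity $\|T^{-1}\|_2^2=(\det T)^{-2}\|T\|_2^2$ valid in the plane, both already at hand. The single delicate point is the placement of the origin in the triangle case, which is handled by translation invariance of the eigenvalues and of the centered second moment $I_2$.
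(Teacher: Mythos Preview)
Your proposal is correct and follows essentially the same route as the paper: the corollary is read off from \autoref{thmfractional} in dimension two together with the planar moment identity \autoref{momentstwo} (case $p=1$), exactly as \autoref{corplateiso} was obtained from \autoref{plateiso}. You unwrap the proof of \autoref{momentstwo} for $p=1$ via Schur's lemma and the identity $\|T^{-1}\|_2^2=(\det T)^{-2}\|T\|_2^2$ rather than simply citing it, and you are more careful than the paper about the translation needed to reduce arbitrary triangles to linear images of the equilateral one; both are harmless elaborations of the paper's argument.
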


We can also use a $p$-frame-like identity with $p=1/2$ (even though $p$ is an integer in true frames), available for the isometry group of the disk, to get the following improved result for ellipses, involving the isoperimetric ratio $A/L^2$:
\begin{theorem}\label{thmperimeter}
  Let $E$ be an ellipse with perimeter $L$ and area $A$. Then for $\alpha\le 1$,
  \begin{align*}
    \left.(\lambda_1+\dots+\lambda_n)^{2/\alpha} A\cdot \frac{A}{L^2}\right|_{\alpha,E}
  \end{align*}
  is maximal when $E$ is a disk.
\end{theorem}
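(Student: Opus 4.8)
The plan is to realize the ellipse as $E=T(D)$, with $D$ the unit disk and $T$ a linear map having singular values $t_1,t_2$, and to run the Rayleigh--Ritz (Ky Fan) principle for $(-\Delta)^{\alpha/2}$ on $T(D)$ using trial functions obtained from an $L^2(D)$-orthonormal family $u_1,\dots,u_n$ of eigenfunctions for the $n$ lowest eigenvalues on $D$, precomposed with rotations. Writing the Dirichlet form as $\mathcal{E}_\Omega[u]=\int_{\Rd}|\xi|^\alpha|\widehat u(\xi)|^2\,d\xi$ for $u$ supported in $\overline\Omega$, and taking $u=u_i\circ g\circ T^{-1}$ for a rotation $g$, a change of variable on the Fourier side (using $g^\dagger=g^{-1}$, $|\det g|=1$) gives
\begin{align*}
  \frac{\mathcal{E}_{T(D)}[u_i\circ g\circ T^{-1}]}{\|u_i\circ g\circ T^{-1}\|^2}=\int_{\Rd}|S g^{-1}\mu|^\alpha\,|\widehat{u_i}(\mu)|^2\,d\mu,\qquad S:=(T^{-1})^\dagger .
\end{align*}
Since $g(D)=D$, each family $\{u_i\circ g\circ T^{-1}\}_{i=1}^n$ is an admissible orthogonal $n$-dimensional trial space on $T(D)$, so $(\lambda_1+\dots+\lambda_n)|_{\alpha,E}$ is at most the trace of the form over that space for every rotation $g$; averaging this scalar inequality over the rotation group (normalized Haar measure) and moving the average inside the integral, the fact that $SO(2)$ acts transitively on circles about the origin collapses the $\mu$-dependence and yields
\begin{align*}
  (\lambda_1+\dots+\lambda_n)\big|_{\alpha,E}\ \le\ c_\alpha(S)\,(\lambda_1+\dots+\lambda_n)\big|_{\alpha,D},\qquad c_\alpha(S)\,|\mu|^\alpha:=\int_{SO(2)}\!\!|Sg\mu|^\alpha\,dg .
\end{align*}
This rotational averaging is exactly the continuous tight $p$-frame identity available for the disk's isometry group (with $p=\alpha/2$, so $p=\tfrac12$ when $\alpha=1$), and the singular value decomposition of $S$ evaluates the constant as $c_\alpha(S)=\tfrac1{2\pi}\int_0^{2\pi}(\sigma_1^{2}\cos^2\psi+\sigma_2^{2}\sin^2\psi)^{\alpha/2}\,d\psi$ with $\sigma_j=1/t_j$.

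Next I would reduce the exponent to $1$. Since $0<\alpha\le1$, the function $t\mapsto t^{\alpha}$ is concave on $[0,\infty)$, so Jensen's inequality applied to $\psi\mapsto(\sigma_1^{2}\cos^2\psi+\sigma_2^{2}\sin^2\psi)^{1/2}$ gives $c_\alpha(S)\le c_1(S)^{\alpha}$, with strict inequality when $\alpha<1$ unless $\sigma_1=\sigma_2$, i.e.\ unless $T$ is a scalar multiple of an orthogonal matrix. It then remains only to recognize $c_1(S)$ geometrically: factoring $1/t_1t_2$ out of the square root,
\begin{align*}
  c_1(S)=\frac1{2\pi t_1t_2}\int_0^{2\pi}\sqrt{t_1^{2}\sin^2\psi+t_2^{2}\cos^2\psi}\,d\psi=\frac{L}{2\pi\,(A/\pi)}=\frac{L}{2A},
\end{align*}
because the integral is precisely the perimeter $L$ of the ellipse $E=T(D)$ with semi-axes $t_1,t_2$, while $A=\pi t_1t_2$ is its area. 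Combining the displays, raising to the power $2/\alpha$, and multiplying by the (scale-invariant) factor $A\cdot A/L^2$ of $E$,
\begin{align*}
  \left.(\lambda_1+\dots+\lambda_n)^{2/\alpha}\,A\cdot\frac{A}{L^{2}}\,\right|_{\alpha,E}\ \le\ \frac14\,\Bigl((\lambda_1+\dots+\lambda_n)\big|_{\alpha,D}\Bigr)^{2/\alpha}\ =\ \left.(\lambda_1+\dots+\lambda_n)^{2/\alpha}\,A\cdot\frac{A}{L^{2}}\,\right|_{\alpha,D},
\end{align*}
using $A_D\cdot A_D/L_D^{2}=\pi\cdot\pi/(2\pi)^2=\tfrac14$ for any disk; hence the disk is a maximizer.

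The change of variable in the first display, the legitimacy of integrating the variational bound over the compact rotation group, and the elliptic-integral bookkeeping are all routine. The substantive point — and the reason the perimeter shows up in place of the polar moment $I_2$ of \autoref{thmfractional} — is the identification of the continuous tight $\tfrac12$-frame constant of the disk's isometry group, evaluated at $S=(T^{-1})^\dagger$, with the scale-invariant isoperimetric ratio $L/2A$ of $E=T(D)$. This is also exactly what restricts the argument to $\alpha\le1$: for $\alpha>1$ the concavity in the Jensen step reverses, and only the weaker Hilbert--Schmidt (polar-moment) bound survives.
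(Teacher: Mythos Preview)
Your proof is correct and follows essentially the same route as the paper's: average the Rayleigh quotient over the rotation group of the disk to extract the constant $c_\alpha(S)$, apply Jensen's inequality (concavity of $t\mapsto t^\alpha$ for $\alpha\le1$) to reduce to $c_1(S)^\alpha$, and then identify $c_1(S)$ with $L/2A$ via the elliptic arclength integral. Your write-up is in fact more careful than the paper's sketch in \autoref{sec:fractional}, which has a typo in the exponent and omits the final bookkeeping that turns the bound into the scale-invariant form $A\cdot A/L^2$.
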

Note that $\lambda_1 A^2/L^2$ for the Dirichlet Laplacian is a relatively well behaved, bounded quantity, but with degenerate domains as extremizers (Polya \cite{P60} and Makai \cite{M62}), instead of balls. Therefore, it would be interesting to check what is the extremal domain in our case, among all convex domains, even just for the first eigenvalue. 

Fix mass $m\ge0$. Relativistic subordinator $\Psi(t)=\sqrt{m^2+t}-m$ leads to another important operator: the Klein-Gordon operator  $\sqrt{m^2-\Delta}-m$ that models relativistic particles in quantum mechanics (see Harrell-Yolcu \cite{HY09} and Yolcu \cite{Y09}). This example requires more care, as the relativistic constant must change with the linear transformation, as did the tension constant for plates. For that reason we use $\cdot|_{m,\Omega}$ to indicate that a quantity is evaluated on $\Omega$ with mass $m$.
\begin{theorem}\label{thmrelativistic} (Klein-Gordon operator)
  Sums of eigenvalues of $\sqrt{m^2-\Delta}-m$ satisfy
  \begin{align*}
    \lambda_1+\dots+\lambda_n \Big|_{md/\|T^{-1}\|_{2},T(\Omega)}\le \left.\frac{\|T^{-1}\|_{2}}{\sqrt{d}}\left(\lambda_1+\dots+\lambda_n\right)\right|_{m,\Omega}.
  \end{align*}

  Furthermore, if $|\det T|=1$, then
  \begin{align*}
    \left.\left(\lambda_1+\dots+\lambda_n\right)^2 V^{2/d}\right|_{m,T(\Omega)}\cdot \left.\frac{V^{1+2/d}}{I_2}\right|_{T^{-1}(\Omega)}
  \end{align*}
  is maximal when $T$ is an orthogonal transformation.
\end{theorem}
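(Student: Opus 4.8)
The plan is to deduce both parts from the general Fourier multiplier bound \autoref{thmgeneralmult}, specialized to the complete Bernstein function $\Psi(t)=\sqrt{m^2+t}-m$; its concavity is precisely the structural property that the general theorem exploits. I recall the mechanism I expect that theorem to supply. The eigenvalues of $\Psi(-\Delta)$ on a domain are characterized through the Dirichlet form $\mathcal E[u]=\int_{\Rd}\Psi(|\xi|^2)\,|\widehat u(\xi)|^2\,d\xi$, and pulling a trial function on $T(\Omega)$ back to $\Omega$ via $u\mapsto u\circ T^{-1}$ converts this into the form with the linearly distorted symbol $\xi\mapsto\Psi(\langle M\xi,\xi\rangle)$, where $M=T^{-1}(T^{-1})^{\dagger}$ has $\tr M=\|T^{-1}\|_2^2$. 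One then feeds into Ky Fan's inequality the orthonormal trial families $\{u_j\circ g\circ T^{-1}\}_{j\le n}$, with $g$ ranging over the isometry group $G$ of $\Omega$; averaging the resulting estimates over $g$, the tight $1$-frame identity $\frac1{|G|}\sum_{g\in G}g^{\dagger}Mg=\frac{\tr M}{d}\,\Id$ (Schur's lemma, via irreducibility) together with concavity of $\Psi$ replaces the averaged distorted symbol by the single symbol $\zeta\mapsto\Psi\bigl(\tfrac{\|T^{-1}\|_2^2}{d}\,|\zeta|^2\bigr)$.

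For the relativistic case I would then invoke the elementary factorization
\[
  \sqrt{m^2+c\,s}\;-\;m\;=\;\sqrt{c}\,\Bigl(\sqrt{(m/\sqrt c)^2+s}\;-\;m/\sqrt c\Bigr),\qquad c>0,
\]
which identifies the symbol $\Psi\bigl(\tfrac{\|T^{-1}\|_2^2}{d}\,|\zeta|^2\bigr)$ with $\tfrac{\|T^{-1}\|_2}{\sqrt d}$ times the Klein--Gordon symbol of a rescaled mass. Taking the trial functions $u_j$ to be the first $n$ eigenfunctions of the Klein--Gordon operator on $\Omega$ with that rescaled mass turns the averaged estimate into the first displayed inequality; equality in the averaging and in Jensen's inequality occurs exactly when $M$ is already scalar, i.e.\ when $T$ is a scalar multiple of an orthogonal matrix. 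This also explains why the mass cannot be the same on both sides: as with the tension parameter for plates, the change of variables forces the relativistic constant to rescale, and one must carry the rescaling through the argument explicitly.

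For the second part I impose $|\det T|=1$, so that $V(T(\Omega))=V(T^{-1}(\Omega))=V(\Omega)$, and I invoke \autoref{lemmoments}: the same irreducibility that produced the frame identity shows that the second moment transforms as $I_2(T^{-1}(\Omega))=\tfrac{\|T^{-1}\|_2^2}{d}\,I_2(\Omega)$. Substituting this and the first part into the scale-invariant product, the factor $\tfrac{\|T^{-1}\|_2}{\sqrt d}$ coming from the eigenvalue bound combines with the factor $\sqrt d/\|T^{-1}\|_2$ coming from $1/I_2(T^{-1}(\Omega))$, cancelling the dependence on $T$; using that $\|T^{-1}\|_2^2\ge d$ when $|\det T|=1$ (arithmetic--geometric mean inequality) together with monotonicity of the eigenvalues in the mass to control the rescaled mass against $m$, what remains is exactly the value of the product at an orthogonal transformation, so that value is the maximum. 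When $T$ is orthogonal, $\|T^{-1}\|_2^2=d$ and every inequality used is an equality, so the bound is attained there.

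The step I expect to be the main obstacle is precisely the bookkeeping of the rescaled relativistic constant and its interaction with the geometric moment $I_2$: unlike the homogeneous fractional Laplacian of \autoref{thmfractional}, where no parameter is present and the factorization is exact with no mass to track, here the mass $m\sqrt d/\|T^{-1}\|_2$ must be kept visible throughout, and the constraint $|\det T|=1$ is what forces it to recombine in the favorable direction with the $\|T^{-1}\|_2^2$ appearing both in $I_2(T^{-1}(\Omega))$ and in the frame constant. A subsidiary point, automatic here, is verifying that $\Psi(t)=\sqrt{m^2+t}-m$ satisfies the hypotheses of \autoref{thmgeneralmult}: it is a complete Bernstein function with $\Psi(0)=0$, hence nonnegative and concave, which is exactly what licenses the Jensen step collapsing the group average of symbols to a single symbol.
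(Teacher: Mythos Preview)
Your proof tracks the paper's essentially line by line. For the first display both of you specialize \autoref{thmgeneralmult} (the paper states this specialization separately as \autoref{thmconcave}) to the concave $\Psi_m(t)=\sqrt{m^2+t}-m$, then use the factorization $\Psi_m(ct)=\sqrt c\,\Psi_{m/\sqrt c}(t)$ (equivalently the paper's $\Psi_m(t/c)=\tfrac1{\sqrt c}\Psi_{\sqrt c\,m}(t)$) to turn the frame constant $c=\|T^{-1}\|_2^2/d$ into a rescaled mass; for the second display both of you combine this with \autoref{lemmoments} (giving $I_2(T^{-1}(\Omega))=c\,I_2(\Omega)$ when $|\det T|=1$), the AM--GM bound $c\ge 1$, and monotonicity of the eigenvalues in $m$.

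One caution: the monotonicity step you and the paper invoke for the second part points the wrong way. With $|\det T|=1$ the second display reduces to $\sum_j\lambda_j|_{m,T(\Omega)}\le\sqrt c\,\sum_j\lambda_j|_{m,\Omega}$, whereas the first display only gives $\sum_j\lambda_j|_{m\sqrt c,\,T(\Omega)}\le\sqrt c\,\sum_j\lambda_j|_{m,\Omega}$; since $\Psi_m$ (hence each eigenvalue) is \emph{decreasing} in $m$ and $m\sqrt c\ge m$, the first display is \emph{weaker} than what the second requires, not stronger. The paper's asserted inequality $\Psi_m(t/c)\ge\Psi_m(t)/\sqrt c$ for $c>1$ is the same slip in disguise: from $\Psi_m(t/c)=\tfrac1{\sqrt c}\Psi_{\sqrt c\,m}(t)$ together with monotone decrease in $m$ one actually gets $\le$, not $\ge$ (for instance $m=1$, $c=4$, $t=12$ yields $\Psi_1(3)=1$ versus $\Psi_1(12)/2=(\sqrt{13}-1)/2\approx 1.30$). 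So your outline matches the paper's, but this particular step does not close as written in either place; what the argument genuinely delivers is the second display with the rescaled mass on $T(\Omega)$, not with the fixed $m$ the theorem states.
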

Note that as in the fractional Laplacian case, we also have a simplified two-dimensional version, without the inverse transformation $T^{-1}$. Finally, when $m=0$ this last result reduces to the fractional Laplacian with $\alpha=1$.

Our final example involves a generic complete Bernstein function $\Psi$, and it unifies \autoref{thmrelativistic} and \autoref{thmfractional} (additional assumption $|\det T|=1$ is irrelevant for the fractional Laplacian due to homogeneity of $\Psi$). For any concave $\Psi$ (in particular any Bernstein function) there exists $0<\beta\le1$ such that $\Psi(t/c)\le c^{-\beta}\Psi(t)$ for any $c>1$. Therefore
\begin{theorem}\label{thmsubor}
  If $|\det T|=1$, then the eigenvalues of $\Psi(-\Delta)$ satisfy
  \begin{align*}
    \lambda_1+\dots+\lambda_n \Big|_{\Psi,T(\Omega)}\le \left.\frac{\|T^{-1}\|_{2}^{2\beta}}{d^\beta}\left(\lambda_1+\dots+\lambda_n\right)\right|_{\Psi,\Omega}.
  \end{align*}
\end{theorem}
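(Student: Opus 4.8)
\textbf{Proof proposal for \autoref{thmsubor}.} The plan is to reduce everything to the Fourier multiplier bound of \autoref{thmgeneralmult} applied to the multiplier $\Psi(|\xi|^2)$, together with the scaling/concavity relation $\Psi(t/c)\le c^{-\beta}\Psi(t)$ that was recalled just before the statement. First I would recall that under a linear change of variables $x\mapsto Tx$, an eigenfunction $u$ on $\Omega$ pulls back to a test function on $T(\Omega)$, and the Fourier symbol of the operator gets distorted: the quadratic form for $\Psi(-\Delta)$ on $T(\Omega)$ evaluated on $u\circ T^{-1}$ produces, after Plancherel, an integral of $\Psi(|(T^{-1})^{\dagger}\xi|^2)$ against $|\hat u(\xi)|^2$. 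This is exactly the setup in which the $p$-frame machinery of \autoref{pmatrix} is used: averaging $\Psi(|(T^{-1})^{\dagger}\xi|^2)$ over the isometry group of $\Omega$ (which is irreducible, hence admits a $1$-frame) replaces the distorted symbol by a radial one, namely $\Psi$ evaluated at a suitable average of $|\xi|^2$. Concretely, the $1$-frame identity gives, for each fixed $\xi$, that the group-average of $|(T^{-1})^{\dagger}R\xi|^2$ over $R$ in the group equals $\tfrac{1}{d}\|T^{-1}\|_2^2\,|\xi|^2$. Combining this with concavity of $\Psi$ (Jensen's inequality: the average of $\Psi$ is at most $\Psi$ of the average) yields
\begin{align*}
  \frac{1}{|G|}\sum_{R\in G}\Psi\bigl(|(T^{-1})^{\dagger}R\xi|^2\bigr)\le \Psi\Bigl(\tfrac{1}{d}\|T^{-1}\|_2^2\,|\xi|^2\Bigr).
\end{align*}

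Next I would invoke the hypothesis $|\det T|=1$, which ensures $\|T^{-1}\|_2^2\ge d$ (AM--GM on the singular values of $T^{-1}$, whose product is $1$), so that $c:=\|T^{-1}\|_2^2/d\ge1$ and the concavity-scaling bound $\Psi(ct)\le$ — wait, the inequality we have is $\Psi(t/c)\le c^{-\beta}\Psi(t)$, i.e. for $c\ge1$, $\Psi$ of a \emph{smaller} argument is controlled; since $\tfrac1d\|T^{-1}\|_2^2|\xi|^2\ge|\xi|^2$ we instead write $|\xi|^2 = (\tfrac1d\|T^{-1}\|_2^2|\xi|^2)/c$ and apply the relation in the form $\Psi(s/c)\le c^{-\beta}\Psi(s)$ with $s=\tfrac1d\|T^{-1}\|_2^2|\xi|^2$, giving $\Psi(|\xi|^2)\le (d/\|T^{-1}\|_2^2)^{\beta}\Psi(\tfrac1d\|T^{-1}\|_2^2|\xi|^2)$, equivalently
\begin{align*}
  \Psi\Bigl(\tfrac1d\|T^{-1}\|_2^2|\xi|^2\Bigr)\le \frac{\|T^{-1}\|_2^{2\beta}}{d^{\beta}}\,\Psi(|\xi|^2).
\end{align*}
Chaining the two displayed inequalities bounds the averaged distorted symbol by $\tfrac{\|T^{-1}\|_2^{2\beta}}{d^{\beta}}$ times the original radial symbol $\Psi(|\xi|^2)$, pointwise in $\xi$.

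Finally I would feed this pointwise symbol comparison into the variational (Rayleigh--Ritz / min-max) characterization of $\lambda_1+\dots+\lambda_n$: using the pullbacks of the first $n$ eigenfunctions of $\Psi(-\Delta)$ on $\Omega$ as a trial subspace on $T(\Omega)$, the numerator of each Rayleigh quotient is bounded by $\tfrac{\|T^{-1}\|_2^{2\beta}}{d^{\beta}}$ times the corresponding quantity on $\Omega$ (after the group averaging, which does not change the subspace dimension since $G$ preserves $\Omega$ and permutes the eigenspaces), while the $L^2$ normalization is preserved because $|\det T|=1$. This is precisely the mechanism already used to prove \autoref{thmgeneralmult}; the present theorem is its specialization to $\Phi(\xi)=\Psi(|\xi|^2)$ combined with the scalar estimate above, so the bulk of the argument is a citation. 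The one genuine subtlety — and the step I expect to be the main obstacle to write cleanly — is the interchange of the group-averaging with the nonlinear function $\Psi$: the frame identity linearizes the action on $|\xi|^2$, but $\Psi$ is applied \emph{after}, so one must be careful that the averaging is done at the level of the quadratic form (where it is a genuine projection-type identity supplied by \autoref{pmatrix}) and the Jensen step is applied to the resulting convex combination of scalars, not to operators. Once that ordering is pinned down, the remaining manipulations (AM--GM for $\|T^{-1}\|_2^2\ge d$, and the elementary rearrangement of the $\Psi(t/c)\le c^{-\beta}\Psi(t)$ relation) are routine, and equality for orthogonal $T$ follows since then $c=1$ and all inequalities above are equalities.
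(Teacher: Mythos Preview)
Your overall strategy matches the paper's: use the group--averaging bound together with Jensen for the concave $\Psi$ and the $1$-frame identity (this is exactly \autoref{thmconcave} / the second part of \autoref{thmgeneralmult}), and then convert the resulting multiplier $\Psi\bigl(\tfrac{\|T^{-1}\|_2^2}{d}\,|\xi|^2\bigr)$ on $\Omega$ back to $\Psi(|\xi|^2)$ via a scaling relation for $\Psi$, finishing with Rayleigh--Ritz on the transplanted eigenfunctions. The AM--GM step giving $c:=\|T^{-1}\|_2^2/d\ge1$ from $|\det T|=1$ is also correct and necessary.

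There is, however, a genuine gap in your second displayed chain. From the relation $\Psi(s/c)\le c^{-\beta}\Psi(s)$ applied with $s=\tfrac{1}{d}\|T^{-1}\|_2^2|\xi|^2$ you correctly obtain $\Psi(|\xi|^2)\le c^{-\beta}\Psi(c|\xi|^2)$, i.e.\ $c^{\beta}\Psi(|\xi|^2)\le \Psi(c|\xi|^2)$. The next line, labeled ``equivalently'', asserts $\Psi(c|\xi|^2)\le c^{\beta}\Psi(|\xi|^2)$; this is the \emph{opposite} inequality, not an equivalent reformulation. What the argument actually requires, in order to dominate the Rayleigh quotient for the multiplier $\Psi(c|\xi|^2)$ by $c^\beta$ times that for $\Psi(|\xi|^2)$, is the pointwise bound $\Psi(ct)\le c^{\beta}\Psi(t)$ for $c\ge1$, equivalently $\Psi(t/c)\ge c^{-\beta}\Psi(t)$. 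For concave $\Psi$ with $\Psi(0)=0$ this always holds with $\beta=1$ (since $\Psi(t)=\Psi(\tfrac{1}{c}\cdot ct+(1-\tfrac{1}{c})\cdot 0)\ge \tfrac{1}{c}\Psi(ct)$), and for $\Psi(t)=t^{\alpha/2}$ one has equality with $\beta=\alpha/2$. The paper's text states the scaling hypothesis with $\le$, which appears to be a misprint; in any case you should not force the desired conclusion by an invalid rearrangement. Once you replace your ``equivalently'' step with the correct inequality $\Psi(ct)\le c^{\beta}\Psi(t)$, the remainder of your outline goes through and coincides with the paper's route.
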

Note that for fractional Laplacian we have $\beta=\alpha/2$ and for the relativistic operator $\beta=1/2$.

\section{General approach}\label{qforms}
Now we develop the general context that will be used to handle all operators described in the main results section. Note that these are only examples of what we can work with, albeit the best known ones. 

\subsection{Quadratic forms in frequency domain}
Define the Sobolev space $H^\beta(\Rd)$ as the subset of $L^2(\Rd)$, such that the Fourier transform of a function $f\in L^2(\Rd)$ satisfies 
\begin{align*}
  \|f\|_\beta^2 = \int_\Rd |\xi|^{2\beta} |\widehat f|^2 d\xi<\infty.
\end{align*}
Then $\|f\|_\beta$ is the Sobolev seminorm and $\|f\|_0$ is the standard $L^2$ norm. Note that when $\beta$ is an integer, this coincides with the classical definitions involving derivatives of $f$. The subspace $H_0^\beta(\Omega)$ of $H^\beta(\Rd)$ is a closure of $C_c^\infty(\Omega)$ in the Sobolev norm $\sqrt{\|f\|_0^2+\|f\|_\beta^2}$. See \cite{DPV} for a detailed discussion about various ways to define fractional Sobolev spaces, their equivalence and embedding properties.

Consider a weakly defined self-adjoint linear operator $A$ with the domain $H^\beta_0(\Omega)$ (possibly fractional Sobolev space). That is, for any $u,v\in C^{\infty}_c(\Omega)$ we define the symmetric quadratic form corresponding to $A$
\begin{align*}
  Q_A(u,v)=\int_\Omega (Au)(x) v(x) dx=\int_\Omega u(x) (Av)(x) dx.
\end{align*}
This form can then be extended to $u,v\in H^\beta_0(\Omega)$. In many classical cases, including Dirichlet Laplacian, one can use Fourier transform to rewrite the quadratic form in the frequency domain. Note that the assumption that $u\in H_0^\beta(\Omega)$ (informally: function $u$ is zero on the boundary (or outside $\Omega$ for fractional cases) as are all its derivatives up to order $\beta$) means that extending with $u=0$ outside $\Omega$ gives a function in $H^\beta(\Rd)$. Parseval's theorem can now be used to justify a frequency domain formula for the quadratic form
\begin{align}\label{Qfourier}
  Q_A(u,v)=\int_\Rd s(\xi) \widehat{u}(\xi) \overline{\widehat{v}(\xi)} d\xi,
\end{align}
where $s(\xi)$ is called the Fourier multiplier (or symbol) corresponding to $A$. In particular, classical Laplacian corresponds to $s(\xi)=|\xi|^2$, while powers of the Laplacian with Dirichlet boundary conditon can be defined as Fourier multipliers $|\xi|^{2\beta}$ with the domain $H_0^\beta(\Omega)$.

In this paper we only consider multipliers $s(\xi)=f(|\xi|^2)$ with at most polynomially growing bounded below function $f$ (in particular any linear combination of power functions with positive coefficient for the highest power). Since $|\xi|^2$ is the Fourier multiplier for the Dirichlet Laplacian on $\Omega$, we call these Laplace related multipliers. The multiplier only depends on the length of $\xi$, hence it is invariant under isometries in frequency domain. This corresponds to operators $A$ invariant under isometries of physical space, just like the classical Laplacian. We are interested in the interaction of linear transformations with these operators. In particular, how does the spectrum of the operator change under such transformation?

Consider an invertible linear map $T:\Rd\to\Rd$. We will abuse the notation slightly, by using $T$ to denote both the linear transformation, and the matrix that defines it. The Fourier transform almost commutes with $T$, since
\begin{align*}
  \widehat{u\circ T}(\xi)&=\int_\Rd u(Tx) \exp(-2\pi i \xi\cdot x)dx=\int_\Rd u(y)\exp(-2\pi i \xi\cdot T^{-1}y)|\det T|^{-1}dy
  \\&=
  \int_\Rd u(y)\exp(-2\pi i T^{-\dagger}\xi\cdot y)|\det T|^{-1}dy=|\det T|^{-1}\widehat u(T^{-\dagger}\xi),
\end{align*}
where $T^{-\dagger}$ denotes the transposed inverse of the matrix $T$.

This allows us to evaluate the quadratic form on the linearly transformed functions
\begin{align}
  Q_A(u\circ T,u\circ T)&=\int_\Rd f(|\xi|^2)|\widehat{u\circ T}|^2d\xi=|\det T|^{-2}\int_{\Rd} f(|\xi|^2)|\widehat u(T^{-\dagger}\xi)|^2d\xi\nonumber
  \\&=
  |\det T|^{-2}\int_{\Rd} f(|T^{\dagger}\xi|^2)|\widehat u(\xi)|^2|\det T^{\dagger}|d\xi\nonumber
  \\&=
  |\det T|^{-1}\int_{\Rd}f(|T^{\dagger}\xi|^2)|\widehat u|^2d\xi.\label{QT}
\end{align}

Note that $f$ is bounded below and grows at most polynomially, hence $f(|\xi^2|)\approx f(|T^\dagger \xi|^2)$ and the last integral converges. Furthermore, taking $f(t)=t^{2\beta}$ implies that $u\circ T$ belongs to $H^\beta(\Rd)$ whenever $u\in H^\beta(\Rd)$. Finally, if $C_c^\infty(\Omega)\ni u_n\to u\in H^\beta_0(\Omega)$ in the $H^\beta(\Rd)$ norm, then $u_n\circ T\to u\circ T$ by a similar calculation. Hence $u\circ T$ is in the domain of the operator $A$ if and only if $u$ is in its domain.

If $T$ is an orthogonal matrix, we get $Q_A(u\circ T,v\circ T)= Q_A(u,v)$ (showing invariance of $Q_A$ under isometries). While for $T=c\Id$ and $f$ homogeneous of degree $\beta$ we get $Q_A(u\circ T,u\circ T)=c^{2\beta-d}Q_A(u,u)$ (use \eqref{QT} with the particular choice of $f$ and $T$) . For general matrices $T$, one cannot immediately simplify the quadratic form. However, $p$-frames described in the next section allow us to simplify many interesting cases.

Note also that $L^2$ norm (equivalent to $\beta=0$) satisfies 
\begin{align*}
  \|u\circ T\|_0^2=|\det T|^{-1}\|u\|_0^2.
\end{align*}

From now on, we also assume that $H^\beta_0(\Omega)$ embeds compactly into $L^2(\Omega)$. When $\beta=1$, this is true for any bounded domain $\Omega$ (Rellich's theorem), while $H^\beta_0(\Omega)$ with $\beta>1$ imbeds isometrically into $H^1_0(\Omega)$, which then imbeds compactly into $L^2$. For the case $\beta<1$ we refer the reader to \cite{DPV}. Boundedness below of $f$ implies that for some $c$ the operator $A+c$ is elliptic. This, together with compact embedding implies discreteness of the spectrum (see Osborn-Babu\v{s}ka \cite{BO91}, or Blanchard-Br\"uning \cite{BB92}, cf. Section 4 of Laugesen \cite{Lnotes}). 

\subsection{Geometric averaging and eigenvalue bounds}
In order to find the smallest eigenvalue one can minimize the Rayleigh quotient (see e.g. Bandle \cite{B90})

\begin{align*}
  R[u]&=\frac{Q_A(u,u)}{\|u\|_0^2},\\
  \lambda_1&=\inf_u R[u].
\end{align*}
Similarly one can find the sum of consecutive eigenvalues
\begin{align*}
  \sum_{i=1}^n \lambda_i &= \inf\left\{ \sum_{i=1}^n R[u_i]:\; u_i \text{ mutually $L^2$-orthogonal}\right\}.
\end{align*}

Consider highly symmetric domain $\Omega$ (with irreducible symmetry group $G$) and its linear image $T(\Omega)$. Suppose $u_i$ are the orthonormal eigenfunctions for $\lambda_i(\Omega)$. Due to symmetry of $\Omega$, for any isometry $U\in G$, functions $u_i\circ U$ are also orthonormal eigenfunctions for $\Omega$. Usually, these are not the same eigenfunctions, but they belong to the same eigenvalues, and are still orthogonal.

Consider functions $v_i: T(\Omega)\to \Real$ defined by $v_i(x)=u_i\circ U\circ T^{-1}(x)$. Linear transformations preserve $L^2$ orthogonality, and zero boundary condition. More formally, $u_i\circ U\circ T^{-1}\in H^{\beta}_0(T(\Omega))$. We get
\begin{align*}
  \lambda_1\Big|_{T(\Omega))}\le R[v_1]=\frac{Q_A(u\circ U\circ T^{-1},u\circ U\circ T^{-1})}{\|u\circ U\circ T^{-1}\|_0^2}=\frac{
  \int_{\Rd}f(|T^{-\dagger}U^\dagger \xi|^2)|\widehat u|^2d\xi
  }{\|u\|_0^2}
\end{align*}
and similarly for sums of eigenvalues. But the left side is independent of $U$, hence we can average over the group $G$ of symmetries of $\Omega$ (or any of its subgroups). Therefore
\begin{align}\label{average1}
  \lambda_1\Big|_{T(\Omega)}\le
  \frac{1}{|G|}\sum_{U\in G}\frac{\int_{\Rd}f(|T^{-\dagger}U^\dagger \xi|^2)|\widehat u|^2d\xi
  }{\|u\|_0^2}
\end{align}
Similarly, when the $u_i$ are orthonormal,
\begin{align}\label{averagesum}
  \lambda_1+\dots+\lambda_n\Big|_{T(\Omega)}\le
  \frac{1}{|G|}\sum_{U\in G}\sum_{i=1}^n\int_{\Rd}f(|T^{-\dagger}U^\dagger \xi|^2)|\widehat u_i|^2d\xi.
\end{align}
In the simplest case $f(t)=t$, the averaging of this type was used in \cite{LS11} to obtain sharp bounds for eigenvalues of the Laplacian, by separating $|\xi|^2$ from $T$. This required irreducibility of $G$ as a subgroup of the orthogonal group on $\Rd$, but $u_i$ did not need to be known explicitly.

In \autoref{pmatrix} we generalize the theory of $p$-frames so that optimal averaging can be applied to other power functions $f$, leading to the plate results described in \autoref{mainclamped}. Note that averaging can always be performed if $G$ is the full orthogonal group (we average with respect to Haar measure on this group). See \autoref{thmperimeter} for an application of this approach. Note also that the Haar measure averaging generalizes to nonlinear transformations of balls \cite{LS14,LS14b}, assuming $f(t)=t$. 

\subsection{A bound for general Fourier multipliers}

Now we introduce a classical $1$-frame based, general approach that allows us to handle almost arbitrary multipliers $f(|\xi|^2)$. Recall the Schatten $1$-norm
\begin{align*}
  \|T\|_1 = \tr \sqrt{T T^\dagger},
\end{align*}
where $\dagger$ denotes the transpose of a matrix.
Assume $M$ is symmetric and nonnegative definite, so that $\|M\|_1 = \tr M$. Write the spectral decomposition $M=VEV^{-1}$ with orthonormal $V$ and diagonal $E$. Define
\begin{align*}
  \Phi(M) = V\Phi(E)V^{-1},
\end{align*}
with entry-wise action of $\Phi$ on the diagonal matrix $E$.
For nonnegative $\Phi$ we have
\begin{align*}
  \|\Phi(M)\|_1 = \tr \Phi(M) = \tr \Phi(E).
\end{align*}
For arbitrary $\Phi$ the trace still makes sense (sum of the diagonal elements or eigenvalues is well defined), though the Schatten $1$-norm is no longer equal to the trace. Write $f=\Phi_1-\Phi_2$ and for arbitrary matrix $T$ define
\begin{align}\label{newmult}
  F[\Phi_1,\Phi_2,T](t) = \frac{1}{d}\tr \Phi_1(t T^{-1}T^{-\dagger})-\Phi_2\left(\frac{t}{d}\tr T^{-1}T^{-\dagger}  \right)
\end{align}
Note that this quantity strongly depends on the choice of $\Phi_i$. However, we have the following weak form of invariance. For any linear function $l$
\begin{align*}
  F[\Phi_1+l,\Phi_2+l,T](t)=F[\Phi_1,\Phi_2,T](t).
\end{align*}
We will use this property in \autoref{secnontight} in the proof of the next theorem.

Any $f\in C^{1,1}_{loc}(0,\infty)$ can be decomposed as a difference of two convex functions \cite{H59,BB11}, therefore the following theorem can be applied to virtually arbitrary multipliers. Due to a complicated statement of the result, we employ the notation $\cdot|_{f,\Omega}$, which indicates that the quantities are evaluated on $\Omega$ and the operator has the Fourier multiplier $f(|\xi|^2)$.
\begin{theorem}\label{thmgeneralmult}
  Suppose it is possible to write $f(t)=\Phi_1(t)-\Phi_2(t)$, where $\Phi_i$ are convex. Let $T$ be an invertible linear transformation and $\Omega$ be a domain with irreducible isometry group. Then
  \begin{align*}
    \lambda_1+\dots+\lambda_n \Big|_{f,T(\Omega)}\le \lambda_1+\dots+\lambda_n\Big|_{F[\Phi_1,\Phi_2,T],\Omega},
  \end{align*}
  with equality when $T$ is a multiple of an orthogonal matrix. Furthermore, if $f=\Psi\circ \Phi$ with concave $\Psi$ and convex $\Phi$ then
  \begin{align*}
    \lambda_1+\dots+\lambda_n \Big|_{f,T(\Omega)}\le \lambda_1+\dots+\lambda_n\Big|_{\Psi\circ F[\Phi,0,T],\Omega}.
  \end{align*}
\end{theorem}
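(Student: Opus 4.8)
The plan is to start from the geometric averaging inequality \eqref{averagesum}. Fix orthonormal eigenfunctions $u_i$ of the operator with multiplier $f(|\xi|^2)$ on the symmetric domain $\Omega$, and apply the transformation bound: for each $i$ and each $U\in G$, the function $u_i\circ U\circ T^{-1}$ is a legitimate test function on $T(\Omega)$, so averaging over $G$ gives
\begin{align*}
  \lambda_1+\dots+\lambda_n\Big|_{f,T(\Omega)}\le \sum_{i=1}^n\frac{1}{|G|}\sum_{U\in G}\int_{\Rd}f\bigl(|T^{-\dagger}U^\dagger\xi|^2\bigr)|\widehat{u_i}|^2\,d\xi.
\end{align*}
For each fixed $\xi$, write $g_\xi(U)=|T^{-\dagger}U^\dagger\xi|^2 = \xi^\dagger U T^{-1}T^{-\dagger}U^\dagger\xi$. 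The key algebraic fact, coming from irreducibility of $G$ (Schur's lemma / the $1$-frame property), is that the group average of the rank-one matrix $U^\dagger\xi\xi^\dagger U$ equals $\frac{|\xi|^2}{d}\Id$, so
\begin{align*}
  \frac{1}{|G|}\sum_{U\in G}g_\xi(U)=\frac{|\xi|^2}{d}\tr\bigl(T^{-1}T^{-\dagger}\bigr)=\frac{|\xi|^2}{d}\|T^{-1}\|_2^2.
\end{align*}

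Next I would split $f=\Phi_1-\Phi_2$ and treat the two pieces by Jensen's inequality in opposite directions, using convexity of each $\Phi_i$ but applied to \emph{different} averages. For $\Phi_2$ (subtracted, so we want a lower bound on $\frac{1}{|G|}\sum_U\Phi_2(g_\xi(U))$), convexity and the computation above give
\begin{align*}
  \frac{1}{|G|}\sum_{U\in G}\Phi_2\bigl(g_\xi(U)\bigr)\ge \Phi_2\!\left(\frac{|\xi|^2}{d}\tr T^{-1}T^{-\dagger}\right).
\end{align*}
For $\Phi_1$ I want an upper bound, which plain Jensen does not give; here is where the finer spectral averaging enters. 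Diagonalize $T^{-1}T^{-\dagger}=W\,\mathrm{diag}(\mu_1,\dots,\mu_d)\,W^\dagger$. For fixed $\xi$, as $U$ ranges over the (irreducible) group the vector $W^\dagger U^\dagger\xi/|\xi|$ has second moment $\frac1d\Id$, so $g_\xi(U)=|\xi|^2\sum_k \mu_k (W^\dagger U^\dagger\xi/|\xi|)_k^2$ is a convex combination (in expectation) of $|\xi|^2\mu_1,\dots,|\xi|^2\mu_d$ with equal average weights; convexity of $\Phi_1$ then yields
\begin{align*}
  \frac{1}{|G|}\sum_{U\in G}\Phi_1\bigl(g_\xi(U)\bigr)\le \frac1d\sum_{k=1}^d\Phi_1\bigl(|\xi|^2\mu_k\bigr)=\frac1d\tr\,\Phi_1\bigl(|\xi|^2 T^{-1}T^{-\dagger}\bigr).
\end{align*}
This requires justification: what is actually true is that the $d$ probability weights $(W^\dagger U^\dagger\xi/|\xi|)_k^2$ average to $1/d$ each over the group, and then one needs that $\frac{1}{|G|}\sum_U \Phi_1(\sum_k \mu_k|\xi|^2 p_k(U))\le \frac{1}{|G|}\sum_U\sum_k p_k(U)\Phi_1(\mu_k|\xi|^2)$ by convexity applied pointwise in $U$, followed by swapping sums and using $\frac{1}{|G|}\sum_U p_k(U)=1/d$. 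Combining the two bounds, the integrand is $\le F[\Phi_1,\Phi_2,T](|\xi|^2)|\widehat{u_i}|^2$ with $F$ as in \eqref{newmult}, and summing over $i$ identifies the right-hand side as the sum of Rayleigh quotients on $\Omega$ for the operator with multiplier $F[\Phi_1,\Phi_2,T](|\xi|^2)$ evaluated on the orthonormal system $\{u_i\}$, which is $\ge \lambda_1+\dots+\lambda_n|_{F[\Phi_1,\Phi_2,T],\Omega}$ — wait, that direction is wrong, so one instead uses that the $u_i$ need not be eigenfunctions of the new operator but the variational characterization of $\sum\lambda_i$ as an infimum over orthonormal systems gives exactly $\sum_i\frac1d\cdots \ge$ is not what we need either; the correct reading is that $\sum_{i=1}^n\int F(|\xi|^2)|\widehat{u_i}|^2 = \sum_{i=1}^n R_{\text{new}}[u_i]\ge \lambda_1+\dots+\lambda_n|_{F,\Omega}$ is false, and in fact we want the $u_i$ to be the \emph{eigenfunctions of the new operator} — but they are the eigenfunctions of the old one. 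The resolution, following \cite{LS11}, is that the new multiplier $F[\Phi_1,\Phi_2,T]$ is applied on $\Omega$ with its own eigenfunctions; one picks $u_i$ to be the first $n$ eigenfunctions of the \emph{new} operator $F[\Phi_1,\Phi_2,T](|\xi|^2)$ on $\Omega$ (these are orthonormal), uses them as test functions for $T(\Omega)$ via the averaging, and the chain of inequalities above then reads $\lambda_1+\dots+\lambda_n|_{f,T(\Omega)}\le \sum_{i=1}^n\frac{1}{|G|}\sum_U\int f(|T^{-\dagger}U^\dagger\xi|^2)|\widehat{u_i}|^2\le \sum_{i=1}^n\int F(|\xi|^2)|\widehat{u_i}|^2=\lambda_1+\dots+\lambda_n|_{F,\Omega}$, which is the claim.

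For the second assertion, with $f=\Psi\circ\Phi$, $\Psi$ concave and $\Phi$ convex, I would apply the same averaging but now to $\Phi$ alone: taking $u_i$ the first $n$ eigenfunctions of the operator with multiplier $\Psi(F[\Phi,0,T](|\xi|^2))$ on $\Omega$, I first use concavity of $\Psi$ and Jensen's inequality \emph{across the group average} to pull $\Psi$ outside,
\begin{align*}
  \frac{1}{|G|}\sum_{U\in G}\Psi\bigl(\Phi(g_\xi(U))\bigr)\le \Psi\!\left(\frac{1}{|G|}\sum_{U\in G}\Phi(g_\xi(U))\right)\le \Psi\!\left(\tfrac1d\tr\Phi(|\xi|^2 T^{-1}T^{-\dagger})\right)=\Psi\bigl(F[\Phi,0,T](|\xi|^2)\bigr),
\end{align*}
where the inner bound is exactly the convex/spectral Jensen step from the first part with $\Phi_2=0$, and $\Psi$ being increasing (or at least monotone on the relevant range) preserves the inequality. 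One then needs to integrate against $|\widehat{u_i}|^2$ and sum, observing that $\int\Psi(\Phi(\cdots))|\widehat{u_i}|^2$ need not split but since $u_i$ are the new operator's eigenfunctions, $\sum_i\int\Psi(F[\Phi,0,T](|\xi|^2))|\widehat{u_i}|^2 = \lambda_1+\dots+\lambda_n|_{\Psi\circ F[\Phi,0,T],\Omega}$, giving the stated bound.

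The main obstacle I expect is the upper-bound Jensen step for the convex piece $\Phi_1$: ordinary Jensen only gives a \emph{lower} bound for a convex function of an average, so one genuinely needs the sharper statement that over an irreducible group the group-averaged distribution of $g_\xi(U)/|\xi|^2$ is a mixture of the eigenvalues $\mu_k$ of $T^{-1}T^{-\dagger}$ with uniform weights $1/d$ — equivalently that $\frac{1}{|G|}\sum_U (W^\dagger U^\dagger \hat\xi\hat\xi^\dagger U W)_{kk}=1/d$ for each $k$ — and then convexity is applied pointwise in $U$ before averaging. Getting the equality case ($T$ a scalar multiple of an orthogonal matrix, where $\mu_1=\dots=\mu_d$ and both Jensen inequalities are equalities, and the averaging is vacuous) out of this requires tracking when these convexity inequalities are tight. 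The secondary technical point is the ``which eigenfunctions to use'' bookkeeping described above: the test functions on $T(\Omega)$ must be built from the eigenfunctions of the \emph{target} operator on $\Omega$ (the one with multiplier $F$, resp.\ $\Psi\circ F[\Phi,0,T]$), not of the original $f$-operator, so that the final equality $\sum_i R_{\text{new}}[u_i]=\lambda_1+\dots+\lambda_n|_{F,\Omega}$ holds — this is the standard trick from \cite{LS11} and should transfer verbatim, but it is the step most easily gotten backwards.
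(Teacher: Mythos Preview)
Your proposal is correct and follows essentially the same route as the paper: the paper packages your ``spectral Jensen'' step for $\Phi_1$ (pointwise convexity with weights $p_k(U)=(W^\dagger U^\dagger\hat\xi)_k^2$, then group-average using $\frac{1}{|G|}\sum_U p_k(U)=1/d$) and your ordinary Jensen step for $\Phi_2$ into its \autoref{nontight}, applied once with $\Psi=\mathrm{id}$, $\Phi=\Phi_1$ and once with $\Psi=-\Phi_2$, $\Phi=\mathrm{id}$, and then---exactly as you eventually settle on---chooses the $u_i$ to be eigenfunctions of the \emph{new} multiplier $F[\Phi_1,\Phi_2,T]$ on $\Omega$. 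Your observation that the second part tacitly needs $\Psi$ monotone (to pass the inner convex bound through $\Psi$) is a valid point the paper glosses over.
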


The proof is postponed to \autoref{secnontight}.
Note that this result is very broad, but awkward to apply, in practice. The Fourier multiplier on the right is not the same as the multiplier on the left. Thus we compare eigenvalues of different operators. Cases listed in the previous section use special form of $f$ to simplify the statement of this theorem. Also, \autoref{thmconcave} presents a much simpler looking case $\Phi_1(t)=0$ (or equivalently $\Phi(t)=t$). Finally, if $f\in C^2(0,\infty)$ and $f''(t)\ge -c$, then we can convexify $f$ by taking $\Phi_1(t)=f(t)+c t^2$ and $\Phi_2(t)=ct^2$. This approach applies to any polynomial $f(t)$ of degree at least $3$ with positive leading term. Furthermore, higher order frames lead to improved results, as described in \autoref{secpolylap}.

As with all our results, Hardy-Littlewood-P\'olya majorization (cf. \cite[Theorem 1.1]{LS14}) implies:
\begin{corollary}\label{thmgeneralmajorized}
  Let $f$ and $\Omega$ be as in \autoref{thmgeneralmult}. For any concave increasing function $\psi$
  \begin{align*}
    \psi(\lambda_1)+\dots+\psi(\lambda_n) \Big|_{f,T(\Omega)}\le \psi(\lambda_1)+\dots+\psi(\lambda_n)\Big|_{F[\Phi_1,\Phi_2,T],\Omega},
  \end{align*}
\end{corollary}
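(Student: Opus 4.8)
\textbf{Proof proposal for \autoref{thmgeneralmajorized}.}

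The plan is to deduce this corollary directly from \autoref{thmgeneralmult} via the Hardy--Littlewood--P\'olya majorization principle, exactly along the lines of \cite[Theorem 1.1]{LS14}, without re-entering the frame-theoretic machinery. The two operators in play are $A_1$, with Fourier multiplier $f(|\xi|^2)$ on $T(\Omega)$, and $A_2$, with multiplier $F[\Phi_1,\Phi_2,T](|\xi|^2)$ on $\Omega$; denote by $(\lambda_i)_{i\ge 1}$ and $(\mu_i)_{i\ge 1}$ their respective ordered eigenvalue sequences. First I would record that both spectra are discrete (the discreteness discussion at the end of \autoref{qforms} applies to both multipliers, since $F[\Phi_1,\Phi_2,T]$ inherits polynomial growth and boundedness below from $f$), so the sequences are well defined and non-decreasing.

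The key step is to promote the scalar inequality of \autoref{thmgeneralmult}, namely $\sum_{i=1}^n \lambda_i \le \sum_{i=1}^n \mu_i$ for every $n$, to the statement that the finite sequence $(\lambda_1,\dots,\lambda_n)$ is \emph{weakly majorized from below} by $(\mu_1,\dots,\mu_n)$ in the sense appropriate for sums of concave functions: all partial sums of the $\lambda$'s (taken in increasing order, which is the natural order here) are dominated by the corresponding partial sums of the $\mu$'s. Since $\psi$ is concave and increasing, the standard majorization lemma (cf.\ \cite[Appendix A]{LS14}) gives that $\sum_{i=1}^n \psi(\lambda_i) \le \sum_{i=1}^n \psi(\mu_i)$; concretely one writes $\psi(\lambda_i)-\psi(\mu_i) = \int$ of $\psi'$, uses that $\psi'$ is non-increasing, and sums by parts against the partial-sum inequalities $\sum_{i\le k}\lambda_i \le \sum_{i\le k}\mu_i$, the boundary term being controlled because $\psi'\ge 0$. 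This is the routine but essential computation; I would either cite it or include the two-line Abel summation argument.

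The main obstacle, such as it is, is purely bookkeeping: one must make sure the direction of majorization matches the direction in which \autoref{thmgeneralmult} is stated, and that the eigenvalues are compared in \emph{increasing} order so that the Abel summation against $\psi'$ non-increasing produces the correct sign. There is a subtlety worth flagging when $\tau<0$ (or more generally when $f$ is not bounded below by a positive constant) since then some $\lambda_i$ or $\mu_i$ may be negative and $\psi$ may only be defined on a half-line; I would either restrict to the regime where all eigenvalues lie in the domain of $\psi$, or note that for the operators of interest in \autoref{mainres} positivity is automatic. Modulo this, the corollary follows immediately, and no new use of irreducibility or of the $p$-frame identities is needed beyond what \autoref{thmgeneralmult} already supplies.
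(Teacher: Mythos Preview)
Your proposal is correct and follows exactly the route the paper indicates: the paper does not give a separate proof of \autoref{thmgeneralmajorized} but simply invokes Hardy--Littlewood--P\'olya majorization (citing \cite[Theorem 1.1]{LS14} and \cite[Appendix A]{LS14}) applied to the partial-sum inequalities supplied by \autoref{thmgeneralmult} for each $k\le n$. Your Abel-summation sketch and the remark on the domain of $\psi$ are appropriate elaborations of that citation.
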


\section{Generalized $p$-frames from finite symmetry groups}\label{pmatrix}
\subsection{General setup}
Bachoc and Ehler \cite{BE13} considered the following generalization of the classical frames.
A set of vectors $\{v_i\}_{i=1}^N$ in $\Rd$ is called a tight $p$-frame, for an integer $p$, if
\begin{align}\label{pframe}
  \frac{1}{N}\sum_i \langle x, v_i\rangle^{2p} = \frac{(1/2)_p}{(d/2)_p} |x|^{2p}|v_i|^{2p},
\end{align}
where $(\cdot)_p$ denotes the rising factorial.  In the same paper, authors also consider a more general tight $p$-fusion frames. The inner products $\langle x, v_i\rangle$ can be viewed as a projection of $x$ onto vectors $v_i$. Replace these rank 1 projections with arbitrary projections on subspaces $V_i$ to obtain tight $p$-fusion frames. We are interested in a special case of fixed dimension $V_i$, say $dim(V_i)=k$. The set of the projections $P_{V_i}$ (given as orthogonal matrices) is a tight $p$-fusion frame if for any $x$
\begin{align}\label{PVframe}
  \frac{1}{N}\sum_i |P_{V_i}x|^{2p} = \frac{(k/2)_p}{(d/2)_p} |x|^{2p}.
\end{align}

Note that when $p=1$, the definitions above reduce to classical tight frames and classical tight fusion frames (see Casazza \textit{et al.} \cite{CKL,CFMWZ}.

Let $G$ be a finite group of symmetries of a polytope in dimension $d$ (subgroup of the orthogonal group $O(d)$). This group can be used to generate projections $P_{V_i}$ using the orbit of a single subspace $V$. The orbit $U(V)$, for $U\in G$, may contain the same subspace many times, resulting in $V_i=V_j$ for some $i$ and $j$. This is not explicitly forbidden in the definition of the tight $p$-fusion frame. In fact, the union of two tight $p$-fusion frames is again a tight $p$-fusion frame. One can even combine frames with different subspace dimensions, resulting in a new frame constant. Note also that different choices of $V$ may result in different lengths of the orbits. Nevertheless, group $G$ generates a tight $p$-fusion frame if
\begin{align*}
  \frac{1}{|G|} \sum_{U\in G} |P_{U(V)}(x)|^{2p}=\frac{1}{|G|} \sum_{U\in G} |UP_{V}U^{-1}x|^{2p}=\frac{(k/2)_p}{(d/2)_p} |x|^{2p}.
\end{align*}

The existence of tight $p$-fusion frames generated by $G$ is equivalent to uniqueness of the invariant polynomials of degree $2p$ for a given group $G$ \cite[Theorem 6.1]{BE13}. In particular, if $G$ is not irreducible, then there exists an invariant hyperplane (orbit not spanning the whole $\Rd$), hence also invariant polynomial of degree 1. Its square is also invariant. But $|x|^2$ is a different invariant of degree 2, since it is invariant even under the full orthogonal group. This shows necessity of the uniqueness for $p=1$ (classical frames).

The following lemma states sufficiency condition for existence of tight $p$-frames. The argument used in its proof will lead to generalized tight $p$-frames later in this section.
\begin{lemma}[\protect{Special case of \cite[Theorem 6.1]{BE13}}]\label{framelemma}
  If a finite symmetry group $G$ acting as isometries of $\Rd$ has unique invariant polynomial of degree $2p$, then any orbit of $G$ generates a tight $q$-frame for any $q\le p$.
\end{lemma}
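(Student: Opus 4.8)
The plan is to run the classical symmetrization argument for frames: average the $2q$-th power of a linear functional over the orbit, recognize the result as a $G$-invariant homogeneous polynomial, invoke the hypothesis to force it to be proportional to $|x|^{2q}$, and then read off the proportionality constant from a single spherical integral. Concretely, fix a vector $v\in\Rd$ whose orbit $\{Uv:U\in G\}$ is to be studied, and fix an integer $q$ with $1\le q\le p$ (the case $q=0$ being trivial, since both sides of \eqref{pframe} reduce to $1$). I would first form the homogeneous polynomial of degree $2q$
\begin{align*}
  P(x)=\frac1{|G|}\sum_{U\in G}\langle x,Uv\rangle^{2q},
\end{align*}
and check that it is $G$-invariant: for $W\in G$, orthogonality gives $\langle Wx,Uv\rangle=\langle x,W^{-1}Uv\rangle$, and as $U$ ranges over $G$ so does $W^{-1}U$, hence $P(Wx)=P(x)$.

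Next I would upgrade the hypothesis at degree $2p$ to the analogous statement at degree $2q$: the space $\mathcal I_{2q}$ of $G$-invariant homogeneous polynomials of degree $2q$ is one-dimensional. Indeed $|x|^{2q}\in\mathcal I_{2q}$, so $\dim\mathcal I_{2q}\ge 1$; and multiplication by the $G$-invariant polynomial $|x|^{2(p-q)}$ is a linear map $\mathcal I_{2q}\to\mathcal I_{2p}$ which is injective (the polynomial ring is an integral domain) and lands in $\mathcal I_{2p}$ (since $|x|^2$ is $G$-invariant), so $\dim\mathcal I_{2q}\le\dim\mathcal I_{2p}=1$. Thus $\mathcal I_{2q}$ is spanned by $|x|^{2q}$, and by the previous paragraph $P(x)=c\,|x|^{2q}$ for some constant $c$.

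Finally I would pin down $c$ by integrating both sides against the uniform probability measure $d\sigma$ on $\Sd$. The right-hand side integrates to $c$; on the left, the standard spherical moment identity $\int_{\Sd}\langle x,w\rangle^{2q}\,d\sigma(x)=|w|^{2q}\int_{\Sd}x_1^{2q}\,d\sigma(x)=|w|^{2q}\,\tfrac{(1/2)_q}{(d/2)_q}$, together with $|Uv|=|v|$ for every $U\in G$, gives $c=|v|^{2q}\,(1/2)_q/(d/2)_q$. Substituting back yields
\begin{align*}
  \frac1{|G|}\sum_{U\in G}\langle x,Uv\rangle^{2q}=\frac{(1/2)_q}{(d/2)_q}\,|v|^{2q}\,|x|^{2q},
\end{align*}
which is exactly the tight $q$-frame identity \eqref{pframe} for the orbit $\{Uv\}_{U\in G}$, all of whose vectors share the common length $|v|$.

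The invariance verification and the spherical moment computation are routine bookkeeping; the one step that genuinely uses the hypothesis — and the one I would state most carefully — is the \emph{descent} of uniqueness of invariants from degree $2p$ to every lower even degree $2q$, carried out by multiplying by powers of $|x|^2$. (Incidentally, for $p\ge 1$ this same multiplication trick shows the hypothesis already forces $G$ to be irreducible, consistent with the remark preceding the lemma.)
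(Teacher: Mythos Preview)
Your proof is correct and follows essentially the same approach as the paper's: form the orbit-averaged $2q$-th power, recognize it as a $G$-invariant homogeneous polynomial, use uniqueness of invariants at degree $2q$ (descended from degree $2p$) to force proportionality to $|x|^{2q}$, and identify the constant. The differences are cosmetic: the paper verifies invariance in the variable $v$ first (then uses the adjoint to swap roles), whereas you verify invariance in $x$ directly; and the paper defers the constant to \cite{BE13} and \autoref{secfp}, whereas you compute it on the spot via the spherical moment $\int_{\Sd}x_1^{2q}\,d\sigma=(1/2)_q/(d/2)_q$. Your phrasing of the descent step, via the injective multiplication-by-$|x|^{2(p-q)}$ map $\mathcal I_{2q}\hookrightarrow\mathcal I_{2p}$, is in fact cleaner than the paper's discussion of fundamental invariants.
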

\begin{proof}
  Group $G$ cannot have fundamental invariant polynomials of degree $2q>2$ with $q\le p$. Otherwise we could get more than one invariant polynomial of degree $2p$. Indeed, any subgroup of the orthogonal group has invariant quadratic $|x|^2$. If there is a fundamental invariant $f$ of degree $2q<2p$, then $|x|^{2p-2q}f$ would be a $2p$-degree invariant, different then $|x|^{2p}$. Hence there is a unique invariant polynomial of degree $2q$, equal $|x|^{2q}$, for any $q\le p$.

We want to prove the tight $q$-frame identity with $q\le p$ for an orbit of $G$. That is
\begin{align}\label{cdotG}
   F(v):=\frac{1}{|G|}\sum_U \langle x,Uv\rangle^{2q} = \frac{(1/2)_q}{(d/2)_q} |x|^{2q}|v|^{2q}.
\end{align}
Note that the right side is an invariant polynomial in $v$. The inner product on the left is linear in $v$, hence the left side is a polynomial of degree $2q$ in $v$. Suppose $U'\in G$.
\begin{align}
  F(U'v)=\frac{1}{|G|}\sum_U \langle x, UU'v\rangle^{2q}=\frac{1}{|G|}\sum_{UU'} \langle x, UU'v\rangle^{2p}=\frac{1}{|G|}\sum_U \langle x, Uv\rangle^{2q}=F(v),
\end{align}
using group property of $G$. Hence $F(v)$ is an invariant polynomial of degree $2q$. Therefore it must equal $c|v|^{2q}$. The left side and the right side differ by a constant (in $v$, may still dependent on $x$), by uniqueness.
  
However $U$ is an isometry, hence $\langle x, Uv\rangle=\langle U^{-1}x,v\rangle$. Therefore the same argument shows that both sides differ by a constant dependent only on $v$. We see that
\eqref{cdotG} holds with an unknown constant (independent of $x$ and $v$). This constant is the same for any tight $p$-frame \cite[Section 4]{BE13} and is computed in \cite[Remark 5.2]{BE13}. We compute the same constant in \autoref{secfp}, as a special case of the polynomial $F_p$ from \autoref{corframes}.
\end{proof}

\subsection{Generalized tight $p$-frames}\label{subspmatrix}
The following reduction is crucial in working with classical frames \cite{Ba10,BBCE} as well as $p$-frames \cite{BE13}:
\begin{align*}
  \langle x,Uy\rangle^2=\tr(P_x UP_yU^{-1})=\tr(P_x P_{Uy})=\langle P_x,P_{Uy}\rangle_{HS}.
\end{align*}
where $P_x=xx^\dagger$ is a projection matrix, and $\langle\cdot,\cdot\rangle_{HS}$ is the Hilbert-Schmidt inner product of matrices.

A tight $2$-frame identity, using above notion, states that
\begin{align*}
  \frac{1}{|G|}\sum_U \langle P_x,P_{Uy}\rangle_{HS}^2  = \frac{3}{d(d+2)}|P_x|_{HS}^2|P_y|_{HS}^2.
\end{align*}
Therefore, matrices $P_{Uy}$ form a tight frame in the space of rank $1$ matrices equipped with Hilbert-Schmidt inner product. We will show that the same matrices form a tight frame in the space of all symmetric matrices. 
\begin{theorem}\label{theoremframes}
  Let $G$ be a group of symmetries with unique invariant polynomial of degree $2p$. Then for any symmetric matrix $M$, projections $P_{Uy}$ formed by the orbit $Uy$ satisfy
  \begin{align}\label{matrixframe}
    \frac{1}{|G|}\sum_U \langle M,P_{Uy}\rangle_{HS}^p  = F_{p}(\sigma(M))|P_y|_{HS}^p=F_{p}(\sigma(M))|y|^{2p},
  \end{align}
  where $F_{p}(\sigma(M))$ is a homogeneous, symmetric polynomial of degree $p$, and $\sigma(M)$ is the multiset of eigenvalues of $M$ (with repeated elements for multiple eigenvalues).
\end{theorem}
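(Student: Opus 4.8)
The plan is to run the group-averaging argument from the proof of \autoref{framelemma}, now with the quadratic form $y\mapsto\langle y,U^{\dagger}MUy\rangle$ playing the role that the linear form $y\mapsto\langle x,Uy\rangle$ played there. First I would rewrite the left-hand side of \eqref{matrixframe}. Since $P_{Uy}=(Uy)(Uy)^{\dagger}$ and $M=M^{\dagger}$,
\[
\langle M,P_{Uy}\rangle_{HS}=\tr\!\bigl(M\,(Uy)(Uy)^{\dagger}\bigr)=(Uy)^{\dagger}M\,(Uy)=\langle y,U^{\dagger}MUy\rangle,
\]
so that
\[
F(y):=\frac{1}{|G|}\sum_{U}\langle M,P_{Uy}\rangle_{HS}^{p}=\frac1{|G|}\sum_{U}\langle y,U^{\dagger}MUy\rangle^{p}
\]
is a polynomial in $y$, homogeneous of degree $2p$, and of degree $p$ in the entries of $M$.

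Next I would show that $F$ is $G$-invariant in $y$, exactly as in \autoref{framelemma}: for $U'\in G$, reindexing the sum by $V=UU'$ gives
\[
F(U'y)=\frac1{|G|}\sum_{U}\langle y,(UU')^{\dagger}M(UU')y\rangle^{p}=\frac1{|G|}\sum_{V}\langle y,V^{\dagger}MVy\rangle^{p}=F(y).
\]
Since $|y|^{2}$ is $O(d)$-invariant, $|y|^{2p}$ is a nonzero homogeneous $G$-invariant of degree $2p$; by the uniqueness hypothesis the space of such invariants is one-dimensional, hence $F(y)=c(M)\,|y|^{2p}$ for a scalar $c(M)$ (necessarily a polynomial of degree $p$ in the entries of $M$).

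It then remains to identify $c(M)$ and check the claimed shape. Let $I_{p}(N)$ denote the mean value of $\langle y,Ny\rangle^{p}$ over $\Sd$. Averaging the identity $F(y)=c(M)|y|^{2p}$ over $y\in\Sd$ yields $c(M)=\tfrac1{|G|}\sum_{U}I_{p}(U^{\dagger}MU)$. The substitution $y\mapsto Oy$ shows $I_{p}(O^{\dagger}NO)=I_{p}(N)$ for every $O\in O(d)$, so $I_{p}(N)$ depends only on the spectrum of $N$; as $U^{\dagger}MU$ and $M$ share eigenvalues, every summand equals $I_{p}(M)$ and hence $c(M)=I_{p}(M)$. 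Writing $M$ in an orthonormal eigenbasis, $\langle y,My\rangle=\sum_{i}\mu_{i}y_{i}^{2}$, so $F_{p}(\sigma(M)):=c(M)=I_{p}(M)$ is the mean of $\bigl(\sum_{i}\mu_{i}y_{i}^{2}\bigr)^{p}$ over $\Sd$, which is manifestly a homogeneous symmetric polynomial of degree $p$ in the eigenvalue multiset $\mu_{1},\dots,\mu_{d}$ of $M$; and $|P_{y}|_{HS}^{p}=|y|^{2p}$ is immediate from $|P_{y}|_{HS}^{2}=\tr(yy^{\dagger}yy^{\dagger})=|y|^{4}$. This establishes \eqref{matrixframe}.

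I do not expect a serious obstacle here. The two points requiring care are that the relevant degree-$2p$ invariant is genuinely $|y|^{2p}$ (handled verbatim as in \autoref{framelemma}, using $|x|^{2}\in\Real[x]^{O(d)}$), and the step upgrading ``$c(M)$ is invariant under $G$-conjugation'' to ``$c(M)$ depends only on the eigenvalues of $M$'' — which is exactly why I would evaluate $c(M)$ by a spherical average rather than attempting to diagonalize $M$ inside the group sum (where $G$ does not commute with the diagonalizing orthogonal matrix). The genuinely hard work, namely extracting a usable closed form for $F_{p}$ (and recovering the classical frame constant $(1/2)_{p}/(d/2)_{p}$ when $M$ is a rank-one projection), is postponed to \autoref{secfp} and is not needed for this statement.
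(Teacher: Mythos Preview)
Your proof is correct and follows essentially the same line as the paper's: establish $G$-invariance in $y$ to force $F(y)=c(M)\,|y|^{2p}$, then average to identify $c(M)$ as a spectral quantity. The only cosmetic difference is that the paper phrases the averaging via Haar measure on $O(d)$ and a single substitution $V^{-1}UW\mapsto W$ that absorbs both the diagonalizing matrix and the finite group sum at once, whereas you average over $\Sd$ and observe directly that each summand $I_p(U^{\dagger}MU)$ already equals $I_p(M)$ by orthogonal conjugation-invariance; both routes land on the same spherical integral \eqref{intOd}.
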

\begin{remark}
  Note that $\langle M,P_{Uy}\rangle_{HS}=\langle U^{-1}MU,P_{y}\rangle_{HS}$. Hence orbit of $M$ under conjugation with group $G$ forms a tight-frame-like object. However, even full orthogonal group acting by conjugation on matrices cannot change the spectrum of the matrix. Hence any symmetric polynomial in $\sigma(M)$ is invariant under this action. Therefore, polynomials $F_p$ are not just $|\sigma(M)|^p$, as is the case for the vector based actions (cf. \autoref{pframe}).
\end{remark}
\begin{proof}
  Note that the left side of \eqref{matrixframe} is a polynomial of degree $2p$ in $y$. It is also invariant under $G$. As in \autoref{framelemma}, we get that \eqref{matrixframe} holds up to a constant $F_p(M)$ depending only on $M$. However, the left side is a polynomial of degree $p$ in the entries $m_{ij}$ of the matrix $M$. Therefore, the constant term $F_p(M)$ is a polynomial in the entries $m_{ij}$ of $M$. This polynomial is obviously homogeneous. We need to show that it only depends on eigenvalues as a multiset (counting multiplicities). That is $F_p$ is a symmetric homogeneous polynomial.

Start by writing the spectral decomposition of $M$
\begin{align*}
  M=VEV^{-1},
\end{align*}
with orthonormal $V$ and diagonal $E$. For any vector $y$
  \begin{align*}
    \langle VEV^{-1},P_{y}\rangle_{HS}=\tr(VEV^{-1}P_{y})=\tr(EV^{-1}P_{y}V)=
    \langle E,P_{V^{-1}y}\rangle_{HS}.
  \end{align*}

Consider Haar measure $\mu$ on the orthogonal group, that is normalized to $\mu(O(d))=1$. We have
  \begin{align*}
F_{p}(M)|y|^{2p}&=\int_{O(d)} F_{p}(M)|Wy|^{2p}\,d\mu(W)=\int_{O(d)} \frac{1}{|G|}\sum_U \langle M,P_{UWy}\rangle_{HS}^p \,d\mu(W)
\\&=
\frac{1}{|G|}\sum_U \int_{O(d)} \langle E,P_{V^{-1}UWy}\rangle_{HS}^p \,d\mu(W)
  \end{align*}
  But Haar measure is invariant under actions of the group it is defined on, hence we can substitute $V^{-1}UW\to W$ to get
  \begin{align}\label{intOd}
    F_{p}(M)|y|^{2p}=\int_{O(d)}  \langle E,P_{Wy}\rangle_{HS}^p \,d\mu(W)
  \end{align}
  Therefore polynomial $F_p(M)$ depends only on the eigenvalue matrix $E$. However, we can use permutation matrices (subgroup of $O(d)$) to permute the diagonal entries in $E$ (spectral decomposition is unique up to the order of the diagonal entries in $E$). Hence $F_p$ only depends on eigenvalues as a multiset (it is symmetric).
\end{proof}
For arbitrary matrix $T$, define its singular values $s(T)$ as the multiset of eigenvalues of the matrix $\sqrt{T^\dagger T}$.  
\begin{corollary}\label{corframes}
  For any matrix $T$ with $d$ columns (possibly rectangular), and any group $G$ with unique invariant polynomial of degree $2p$
  \begin{align*}
    \frac{1}{|G|}\sum_U |TUx|^{2p}  = F_{p}(s^2(T))|x|^{2p}.
  \end{align*}
\end{corollary}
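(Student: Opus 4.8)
The plan is to deduce the corollary from \autoref{theoremframes} by feeding in the right symmetric matrix $M$. Writing the singular value decomposition is not even necessary; the only observation needed is that $|TUx|^2 = \langle TUx, TUx\rangle = \langle T^\dagger T\, Ux, Ux\rangle$. Now recall from the reduction used throughout \autoref{subspmatrix} that for any symmetric matrix $M$ and any vector $z$ one has $\langle Mz, z\rangle = \tr(M z z^\dagger) = \langle M, P_z\rangle_{HS}$, where $P_z = zz^\dagger$. Applying this with $M = T^\dagger T$ and $z = Ux$ gives $|TUx|^2 = \langle T^\dagger T, P_{Ux}\rangle_{HS}$, so that $|TUx|^{2p} = \langle T^\dagger T, P_{Ux}\rangle_{HS}^p$.

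With that identity in hand the corollary is immediate: average over $U \in G$ and invoke \autoref{theoremframes} with $M = T^\dagger T$ (which is symmetric — indeed nonnegative definite — and has $d$ columns and $d$ rows, so the orbit $\{Ux\}$ and the projections $P_{Ux}$ make sense in $\Rd$). This yields
\begin{align*}
  \frac{1}{|G|}\sum_{U} |TUx|^{2p} = \frac{1}{|G|}\sum_{U} \langle T^\dagger T, P_{Ux}\rangle_{HS}^p = F_p(\sigma(T^\dagger T))\,|x|^{2p}.
\end{align*}
Finally, the eigenvalues of the symmetric nonnegative matrix $T^\dagger T$ are exactly the squares of the singular values of $T$, i.e.\ $\sigma(T^\dagger T) = s^2(T)$ as multisets (this is precisely the definition of $s(T)$ given just before the corollary, applied to $\sqrt{T^\dagger T}$ and squared). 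Substituting $\sigma(T^\dagger T) = s^2(T)$ gives the claimed formula.

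There is essentially no obstacle here; the corollary is a repackaging of \autoref{theoremframes}. The only point that deserves a word of care is the dimension bookkeeping for rectangular $T$: if $T$ has $d$ columns and $m$ rows, then $T^\dagger T$ is $d\times d$ and symmetric, so \autoref{theoremframes} applies verbatim in $\Rd$ with the group $G \subset O(d)$; the number of rows $m$ is irrelevant because it only affects the ambient space of the image vectors $TUx$, whose norms are computed via the $d\times d$ form $T^\dagger T$. One should also note that $T^\dagger T$ may be singular (when $T$ does not have full column rank), but \autoref{theoremframes} was stated for all symmetric $M$ with no positivity assumption, so this causes no trouble; $F_p$ is simply evaluated on a multiset of eigenvalues some of which vanish.
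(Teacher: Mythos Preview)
Your proof is correct and follows essentially the same approach as the paper: both reduce $|TUx|^2$ to the Hilbert--Schmidt inner product $\langle T^\dagger T, P_{Ux}\rangle_{HS}$ and then invoke \autoref{theoremframes} with the symmetric matrix $M = T^\dagger T$. Your version is more explicit about the identification $\sigma(T^\dagger T) = s^2(T)$ and the dimension bookkeeping for rectangular $T$, which the paper leaves tacit.
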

\begin{proof}
  Note that
  \begin{align*}
    |TUx|^2=Tr(TUx(TUx)^\dagger)=Tr(TUxx^\dagger U^{-1}T^{\dagger})=Tr(T^\dagger T U P_xU^{-1})=\langle T^\dagger T,P_{Ux}\rangle_{HS},
  \end{align*}
  and $T^\dagger T$ is symmetric.
\end{proof}
\begin{corollary}
  In the theorem and the corollary above the finite group $G$ can be replaced with full orthogonal group $O(d)$. This also means that the sum is replaced with the Haar measure integral. Finally, any restriction based on uniqueness of the invariant polynomials no longer applies. Hence $G=O(d)$ can be used with arbitrary value of $p$ in any dimension.
\end{corollary}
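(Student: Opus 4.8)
The plan is to repeat the proofs of \autoref{theoremframes} and \autoref{corframes} essentially verbatim, replacing the finite average $\frac1{|G|}\sum_{U\in G}$ by integration against the unique normalized Haar measure $\mu$ on the compact group $O(d)$, and to observe that the hypothesis on uniqueness of degree-$2p$ invariants entered the earlier arguments in exactly one place: to conclude that a $G$-invariant homogeneous polynomial of degree $2p$ must be a scalar multiple of $|x|^{2p}$. When $G=O(d)$ this conclusion is a classical fact rather than a hypothesis, which is precisely why the restriction disappears and arbitrary $p$ and $d$ become admissible.

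First I would record that fact: every polynomial on $\Rd$ invariant under the full orthogonal group lies in $\Real[|x|^2]$ — it is constant on each sphere $|x|=r$, hence of the form $g(|x|^2)$ with $g$ a polynomial — so a homogeneous $O(d)$-invariant polynomial of degree $2p$ equals $c|x|^{2p}$ for some constant $c$, in every dimension and for every $p$. Next I would check that $y\mapsto\int_{O(d)}\langle M,P_{Uy}\rangle_{HS}^p\,d\mu(U)$ is still a polynomial of degree $2p$ in $y$: since $\langle M,P_{Uy}\rangle_{HS}=y^\dagger U^\dagger M U y$ is a quadratic form in $y$ whose coefficients are polynomials in the entries of $U$, its $p$-th power is a polynomial in $y$ whose coefficients are continuous functions on the compact group $O(d)$, hence $\mu$-integrable, and term-by-term integration produces a polynomial in $y$ of degree at most $2p$. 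Exactly as in \autoref{theoremframes}, this polynomial is $O(d)$-invariant in $y$ (substitute $U\to UW^{-1}$, using Haar invariance), so by the fact above it equals $F_p(M)|y|^{2p}$ with $F_p(M)$ a constant depending only on $M$; expanding shows $F_p(M)$ is a homogeneous polynomial of degree $p$ in the entries of $M$. Writing $M=VEV^{-1}$ gives $\langle M,P_{Uy}\rangle_{HS}=\langle E,P_{V^{-1}Uy}\rangle_{HS}$, and the Haar-invariant substitution $V^{-1}U\to U$ reduces the integral to $\int_{O(d)}\langle E,P_{Uy}\rangle_{HS}^p\,d\mu(U)$, which depends only on $E$; since permutation matrices lie in $O(d)$, $F_p$ is symmetric in the eigenvalues. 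This yields the $O(d)$-version of \eqref{matrixframe}, and the $O(d)$-version of \autoref{corframes} follows from the identity $|TUx|^2=\langle T^\dagger T,P_{Ux}\rangle_{HS}$ with $T^\dagger T$ symmetric, integrated against $\mu$.

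The only point needing any care is the routine interchange of the group average with the polynomial structure — that averaging a bounded polynomial-valued family over the compact group $O(d)$ returns a polynomial of the same degree — together with the invocation of the elementary invariant theory of the orthogonal group; both are standard, so I expect no genuine obstacle. The explicit evaluation of the resulting constant $F_p$ is deferred to \autoref{secfp}.
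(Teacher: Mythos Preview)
Your proposal is correct and matches the paper's intended argument: the paper states this corollary without a separate proof, since the proof of \autoref{theoremframes} already passes through the Haar integral over $O(d)$ (see \eqref{intOd}), and your explicit isolation of where the uniqueness hypothesis enters---together with the classical fact that $O(d)$-invariant polynomials lie in $\Real[|x|^2]$---is precisely why the restriction drops. The only additional care you take (polynomiality survives Haar averaging) is routine and correctly handled.
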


\subsection{Groups with unique invariant polynomial of degree $2p$ (admitting $p$-frames)}\label{pexists}
Existence of the invariant polynomials is a very well understood subject. In particular one can easily find a table of the degrees of fundamental invariants for reflection groups (see Benson and Grove \cite[Table 7.1]{BG}). In case a symmetry group is missing from a table (e.g. groups of rotations, without reflections) one can use Molien's series \cite[Theorem 7.4.4 with examples]{BG} to check the number of linearly independent invariants of a given degree. See also \autoref{secsage} for SAGE implementation of a Molien series. Below we list most of the interesting examples.

As we already pointed out, every subgroup of the orthogonal group has a quadratic fundamental invariant $|x|^2$. The full symmetry group of the tetrahedron, cube and octahedron (as well as their higher dimensional equivalents, Coxeter groups $A_n$, $C_n$ and $D_n$) have a fundamental invariant of degree $4$. Therefore the uniqueness fails in these cases, since $|x|^4$ is also invariant. Hence those groups do not admit tight $p$-frames with $p>1$. Nevertheless, in \autoref{secnontight} we will construct non-tight $p$-frames for these groups, and use them to get bounds for Laplace-like operators. 

In dimension 3, only icosahedral group (Coxeter $H_3$) does not have a fundamental invariant of degree $4$, but it does have one of degree $6$. Hence icosahedral symmetry admits tight $2$-frames, but not $3$-frames. The rotation group of the icosahedron (subgroup of index $2$ of $H_3$) does not have a degree $4$ fundamental invariant either, as we check using SAGE in \autoref{secsage}.

In dimension 4, the group of symmetries of a 24-cell (Coxeter $F_4$) admits $2$-frames, while the group of symmetries of 120-cell (Coxeter $H_4$) allows for $5$-frames. Finally, Coxeter groups $E_6$ and $E_7$ admit $2$-frames and $E_8$ allows for $3$-frames.

All these examples show that in dimensions at least $3$, there are few finite groups admitting higher order frames, possibly none. However, one can always get an infinite frame using full orthogonal group (symmetries of a ball) and Haar measure integral instead of summation.

Dimension $2$ is the most interesting, since the group of symmetries $I_2(n)$ of a regular $n$-gon has only two fundamental invariants. One of course of degree $2$, the other of degree $n$. Therefore $I_2(n)$ admits $p$-frames for any $p<n/2$. Furthermore, if $n$ is odd, we also get $p$-frames for any $p<n$. In particular symmetry of the equilateral triangle allows for $2$-frames, while regular pentagon admits $4$-frames. Somewhat surprisingly, symmetry of the square is not enough to allow even $2$-frames. On the other hand, the fact that squares and cubes do not admit $2$-frames is equivalent to Pythagorean theorem holding only with squares of the lengths of the sides of a right triangle (simplex).

\section{Polynomials $F_p$}\label{secfp}
In this section we derive a formula for the polynomials $F_p$ used in the statement of \autoref{theoremframes} and \autoref{corframes}. We start with formula \eqref{intOd} with unit vector $y$ and diagonal matrix $E$ with entries $e_1,\dots,e_d$. It is enough to consider positive $e_i$, and for convenience we choose to work with matrix $E^2$, instead of $E$.
\begin{align}\label{oversphere}
  F_{p}(E^2)=\int_{O(d)}  \langle E^2,P_{Uy}\rangle_{HS}^p ,d\mu(U)=\int_{O(d)} |EUy|^{2p}\,d\mu(U)=\frac{1}{|S^{d-1}|}\int_{S^{d-1}} |E\theta|^{2p} dS(\theta),
\end{align}
since Haar measure on $O(d)$ induces a uniform measure on the sphere. In \cite{BE13}, authors used rotational invariance of the Laplacian, and the fact that the Laplacian restricted to a subspace is still the Laplacian to handle matrices $E$ with $0$'s and $1$'s on the diagonal (subspace projections). In general one might try to turn the integral into an integral in spherical coordinates, however the calculations become very tedious in higher dimensions. Furthermore, we would like to also find a formula for $F_p$ that would not require the singular value decomposition of the transformation. Such a formula would be easier to apply to symbolic matrices, for which singular value decomposition is not easy to find.

Direct, polar coordinates based approach in two dimensions has an advantage when we take the whole orthogonal group as $G$. That is when $\Omega$ is a disk, and $T(\Omega)$ is an ellipse. In this case constants $F_p$ can be evaluated even for \textit{non-integer} values of $p$, albeit in terms of a rather complicated combination of hypergeometric functions ${}_2F_1$. Note that we defined $F_p$ as polynomials (integer values of $p$), however \eqref{oversphere} naturally extends to any real $p$. In particular, $p=1/2$ leads to an elliptic integral of the first kind, that gives slightly better upper bounds for $\sqrt{-\Delta}$, than tight $1$-frame (see \autoref{thmperimeter} and its proof in \autoref{sec:fractional}). Effectively we are using $1/2$-frame-like identity, that is available for the full orthogonal group.

\subsection{$p$-moments of the sums of squares of Gaussian random variables.}
From the probabilistic point of view, the Laplacian is closely connected to Brownian motion and Gaussian random variables. We tackle the general matrix $E$ in the integral above using Gaussian random variables.
Note that 
\begin{align*}
  \int_0^\infty e^{-r^2/2} r^{2p+d-1} dr=2^{p-1+d/2}\Gamma(p+d/2).
\end{align*}
Therefore \eqref{oversphere} can be rewritten as
\begin{align}\label{fpgaus}
  F_p(E^2)=
  \frac{\Gamma(d/2)}{2^{p}\Gamma(p+d/2)}\int_\Rd \frac{1}{(2\pi)^{d/2}}e^{-|x|^2/2} |Ex|^{2p}dx=\frac{1}{2^p (d/2)_p}\E\left(\sum_i e_i^2X_i^2\right)^p,
\end{align}
where $X_i$ are independent standard normal random variables. On the right we have a somewhat complicated $2p$-moment of these random variables.
Note that taking $e_1=\dots=e_k=1$ and $e_{k+1}=\dots=e_d=0$, then integrating resulting $k$-dimensional Gaussian leads to a different proof of \cite[Remark 5.2]{BE13} (constant in \eqref{PVframe}). 

To find the general formula we first establish two probabilistic results. Then we use abstract algebra of symmetric functions to deduce an easy to calculate formula for $F_p$. By $m_\lambda(a_1,\dots,a_n)$ we denote the monomial symmetric polynomial with exponent patern $\lambda$ (see e.g. \cite{Mac95}), that is a sum of all possible monomials with the same exponent pattern.

\begin{lemma}\label{lemgaus}
  Let $X_i,\dots,X_n$ be independent standard normal random variables, and $a_i\in \Real$. Then
  \begin{align}
 \E\left(\sum_i a_iX_i^2\right)^p 
 =\frac{p!}{2^p}\sum_{\lambda}\left(\prod_{k\in\lambda}\binom{2k}{k}\right)m_\lambda(a_1,\dots,a_n), \label{GM}
  \end{align}
  where we sum over all integer partitions $\lambda$ of $p$.
\end{lemma}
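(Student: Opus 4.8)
The plan is to compute the moment $\E\left(\sum_i a_iX_i^2\right)^p$ by combining the multinomial expansion with the classical moment formula for a single Gaussian, and then to recognize the resulting sum as a sum over integer partitions. First I would expand
\begin{align*}
  \left(\sum_i a_iX_i^2\right)^p = \sum_{k_1+\dots+k_n=p}\binom{p}{k_1,\dots,k_n}\prod_i a_i^{k_i}X_i^{2k_i},
\end{align*}
take expectations, and use independence to factor $\E\prod_i X_i^{2k_i}=\prod_i \E X_i^{2k_i}=\prod_i (2k_i-1)!!$. The key arithmetic identity I would invoke is $(2k-1)!!=\frac{(2k)!}{2^k k!}=\frac{k!}{2^k}\binom{2k}{k}$, so that the multinomial coefficient $\binom{p}{k_1,\dots,k_n}=\frac{p!}{\prod_i k_i!}$ multiplied by $\prod_i (2k_i-1)!!$ collapses to $\frac{p!}{2^p}\prod_i \binom{2k_i}{k_i}$ (using $\sum k_i = p$ for the power of $2$, and the $k_i!$ factors cancelling). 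This already gives
\begin{align*}
  \E\left(\sum_i a_iX_i^2\right)^p = \frac{p!}{2^p}\sum_{k_1+\dots+k_n=p}\left(\prod_i \binom{2k_i}{k_i}\right)\prod_i a_i^{k_i}.
\end{align*}

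The remaining step is purely combinatorial bookkeeping: group the compositions $(k_1,\dots,k_n)$ of $p$ by their underlying partition $\lambda$. For a fixed partition $\lambda$ of $p$, the monomials $\prod_i a_i^{k_i}$ ranging over all distinct rearrangements of the nonzero parts (with the remaining $k_i=0$) are exactly the monomials appearing in $m_\lambda(a_1,\dots,a_n)$, and the factor $\prod_i\binom{2k_i}{k_i}$ depends only on the multiset of parts (note $\binom{0}{0}=1$, so zero parts contribute trivially), equalling $\prod_{k\in\lambda}\binom{2k}{k}$. Hence the composition sum reorganizes as $\sum_\lambda \left(\prod_{k\in\lambda}\binom{2k}{k}\right) m_\lambda(a_1,\dots,a_n)$, which is precisely \eqref{GM}.

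I do not anticipate a serious obstacle here — the result is essentially a reformulation of the multinomial theorem plus Gaussian moments. The one point requiring a little care is the passage from the sum over compositions to the sum over partitions: one must check that the combinatorial weight is genuinely partition-dependent (not composition-dependent), which is clear since both $\prod_i\binom{2k_i}{k_i}$ and the collection of monomials are invariant under permuting the $k_i$, and that the monomial symmetric polynomial $m_\lambda$ correctly accounts for multiplicities when $\lambda$ has repeated parts (the standard convention for $m_\lambda$ handles this, each distinct monomial appearing once). I would also note explicitly that the formula is stated for $n$ variables with $a_i$ arbitrary reals, so no positivity is needed, and that partitions $\lambda$ with more than $n$ parts contribute nothing since $m_\lambda(a_1,\dots,a_n)=0$ in that case — consistent with the convention in the sum. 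If desired, a one-line sanity check with $p=1$ (giving $\frac12\binom{2}{1}\sum a_i = \sum a_i = \E\sum a_i X_i^2$) and $p=2$ confirms the normalization.
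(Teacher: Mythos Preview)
Your proof is correct and follows essentially the same approach as the paper: multinomial expansion, Gaussian moments $(2k-1)!!$, the identity $(2k-1)!!/k!=\binom{2k}{k}/2^k$, and regrouping compositions into partitions via $m_\lambda$. The only cosmetic difference is that you simplify to binomial coefficients before collecting into partitions, whereas the paper collects first and simplifies after.
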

We do not know whether this is a new result, but we could not find it in any probability related book.
\begin{proof}
  
\begin{align*}
 \E\left(\sum_i a_iX_i^2\right)^p 
 &=\sum_{k_1+\dots+k_p=p} \binom{p}{k_1,\dots,k_d}\prod_{i=1}^n a_i^{k_i}\E X_i^{2k_i}
 \\&=
 \sum_{k_1+\dots+k_p=p} \binom{p}{k_1,\dots,k_d}\prod_{i=1}^n (2k_i-1)!! a_i^{k_i} 
\end{align*}
Note that multinomial coefficient, as well as the double factorial representing the Gaussian moment only depend on the multiset $\{k_i\}$. Therefore the sum can be rewritten as a sum over all partitions $\lambda$ of $p$, involving monomial symmetric polynomials $m_\lambda(a_1,\dots,a_n)$. Since $X_i$ are identically distributed, they have the same moments, and we can treat $a_i$ as variables of the polynomial. We get
\begin{align*}
 \E\left(\sum_i a_iX_i^2\right)^p 
 &= 
 {p!}\sum_{\lambda}\left(\prod_{k\in\lambda}\frac{(2k-1)!!}{k!}\right)m_\lambda(a_1,\dots,a_n) 
 \\&=
 \frac{p!}{2^p}\sum_{\lambda}\left(\prod_{k\in\lambda}\binom{2k}{k}\right)m_\lambda(a_1,\dots,a_n)
\end{align*}
\end{proof}
We also need an auxiliary result for double sequences of standard normal random variables. The following lemma relies on the relation between Gaussian random variables and the $\chi^2$ distribution. This time we need complete homogenous polynomials $h_p$, defined as the sum of all possible monomials of degree $p$. Again, the monograph \cite{Mac95} provides an excellent source of information about these and other kinds of symmetric polynomials.
\begin{lemma}\label{lemdouble}
  Let $X_i,\dots,X_n,Y_1,\dots,Y_n$ be independent standard normal random variables, and $a_i\in \Real$. Then
  \begin{align}
  \frac{p!}{2^p}
 \sum_{\lambda}\left(\prod_{k\in\lambda}\binom{2k}{k}\right)&m_\lambda(a_1,\dots,a_n,a_1,\dots,a_n)
 \\&=
 \E\left(\sum_i a_i(X_i^2+Y_i^2)\right)^p 
 =
 2^p p! h_p(a_1,\dots,a_n).
  \end{align}
\end{lemma}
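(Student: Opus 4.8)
\textbf{Proof proposal for \autoref{lemdouble}.} The plan is to read the first equality directly off \autoref{lemgaus}, and to obtain the second from the distributional identity $X_i^2+Y_i^2\sim\chi^2_2$ together with the generating function for the complete homogeneous symmetric polynomials.

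For the first equality, apply \autoref{lemgaus} to the list of $2n$ coefficients $a_1,\dots,a_n,a_1,\dots,a_n$ and the $2n$ independent standard normals $X_1,\dots,X_n,Y_1,\dots,Y_n$; since that lemma is stated for arbitrary real coefficients (repetitions allowed) this is immediate, with no extra work.

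For the second equality, I would use that $X_i^2+Y_i^2$ has the chi-squared distribution with two degrees of freedom, i.e.\ $X_i^2+Y_i^2=2Z_i$ where $Z_i$ is a standard rate-one exponential variable and the $Z_i$ are independent. Hence
\begin{align*}
  \E\Big(\sum_i a_i(X_i^2+Y_i^2)\Big)^p = 2^p\,\E\Big(\sum_i a_iZ_i\Big)^p.
\end{align*}
Now either expand by the multinomial theorem using $\E Z_i^{k}=k!$ — the multinomial coefficient $\binom{p}{k_1,\dots,k_n}=\frac{p!}{k_1!\cdots k_n!}$ cancels against $\prod_i k_i!$, leaving $p!$ times the sum of all monomials $\prod_i a_i^{k_i}$ with $\sum_i k_i=p$, which is exactly $p!\,h_p(a_1,\dots,a_n)$ — or, more cleanly, use moment generating functions: for small $|t|$,
\begin{align*}
  \E\exp\Big(t\sum_i a_iZ_i\Big)=\prod_i\frac{1}{1-ta_i}=\sum_{p\ge0}h_p(a_1,\dots,a_n)\,t^p,
\end{align*}
so $\E\big(\sum_i a_iZ_i\big)^p=p!\,[t^p]\prod_i(1-ta_i)^{-1}=p!\,h_p(a_1,\dots,a_n)$. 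Combining gives the claimed value $2^p p!\,h_p(a_1,\dots,a_n)$.

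I do not expect a genuine obstacle here: the entire content is recognizing $X_i^2+Y_i^2=2Z_i$ with $Z_i$ standard exponential and invoking the product formula for the generating function of $h_p$. The only care needed is bookkeeping of the constant $2^p$ so that the two ends of the displayed chain agree, and noting that \autoref{lemgaus} applies verbatim to the doubled coefficient list.
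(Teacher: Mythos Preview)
Your proposal is correct and matches the paper's proof essentially line for line: the paper likewise gets the first equality by applying \autoref{lemgaus} to the doubled coefficient list, then rewrites $X_i^2+Y_i^2$ as $2Z_i$ with $Z_i$ standard exponential and expands by the multinomial theorem, cancelling $\binom{p}{k_1,\dots,k_n}$ against $\prod_i k_i!$ to obtain $2^p p!\,h_p$. Your moment-generating-function alternative is a pleasant extra but not needed, and the paper does not use it.
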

Note that monomial symmetric polynomial $m_\lambda$ has $2n$ variables, but is evaluated at two copies of $a_1,\dots,a_n$. As a result we get a polynomial with more than one exponent pattern.

\begin{proof}
  The first equality follows directly from \autoref{lemgaus} with $2n$ standard normal random variables, with each factor $a_i$ repeated twice. To get the second formula, note that $X_i^2+Y_i^2$ has $\chi^2(2)$ distribution, which also equals exponential distribution with $\lambda=1/2$. Let $Z_i$ be exponential random variables with $\lambda=1$, then
  \begin{align}
 \E\left(\sum_i a_i(X_i^2+Y_i^2)\right)^p
 &=
 2^{p}\E\left(\sum_i a_i Z_i\right)^p =
 2^{p}\sum_{k_1+\dots+k_p=p} \binom{p}{k_1,\dots,k_p}\prod_{i=1}^n a_i^{k_i}\E Z_i^{k_i}
 \\&=
 2^{p}\sum_{k_1+\dots+k_p=p} \binom{p}{k_1,\dots,k_p}\prod_{i=1}^n k_i! a_i^{k_i}= 
 {2^p p!} h_p(a_1,\dots,a_n).
  \end{align}
  In the last equality we cancel all $k_i!$ terms with the multinomial coefficient, and we get all possible monomials of combined degree $p$, each with coefficient $p!$. Therefore we get complete homogeneous polynomial $h_p$ of degree $p$ in $n$ variables. 
\end{proof}

\subsection{Algebra of symmetric functions}
Consider the ring of symmetric functions over rational numbers, denoted by $\Lambda$ in \cite{Mac95}, defined on countably many variables $\{a_1,a_2,\dots\}$. Let $A_n=\{a_1,\dots,a_n\}$. For any partition $\lambda$ of $p$
we define a $\lambda$ indexed power sum basis element
\begin{align*}
  p_\lambda(A_n)=\prod_{k\in\lambda} (a_1^{k}+\dots+a_n^{k})=\prod_{k\in\lambda} p_{k}(A_n),
\end{align*}
where $p_k(A_n)$ is called a power sum polynomial of degree $k$.
Then $p_\lambda(A_n)$ generates $\Lambda$. In the previous section we used monomial symmetric polynomials, which also form a basis for the same ring. 

Note that any symmetric function has the same expansion in monomial basis (or power sum basis), regardless of the number of variables in $A_n$. In other words, $\Lambda$ can be treated as an abstract ring without ever specifying the variable set $A$ (alphabet). Furthermore, one can define abstract algebraic alphabet operations over rational numbers, and these operation reduce to intuitive alphabet operations over natural numbers. See \cite[Chapter 2]{Las03} for even broader context. In particular, for any symmetric function $S(A)$ we could try to define doubling operation
\begin{align*}
  S(2A)=s(a_1,\dots,a_n,a_1,\dots,a_n).
\end{align*}
Hence $S(2A)$ would be a symmetric function of $2n$ variables, evaluated on two copies of $A$. However it is not immediately clear how one could make this precise. Expanding $S$ in the power sum basis with rational coefficients $q_\lambda$ leads to an intuitive definition
\begin{align*}
  S(2A)&=\sum_\lambda q_\lambda p_\lambda(2A)=\sum_\lambda q_\lambda \prod_{k\in \lambda} p_k(2A)
  \\&=\sum_\lambda q_\lambda \prod_{k\in \lambda} 2p_k(A),
\end{align*}
since power sum polynomials are simply sums of powers of all variables. It is clear that using power sum basis we can define $S(qA)$ for any rational $q$. And for any natural number $q$ we can interpret this transformation as taking $qn$ variables evaluated on $q$ copies of $A_n$. It is also clear that $\theta_q: S(A)\to S(qA)$ is an automorphism of $\Lambda$ with inverse $\theta_{1/q}$. 

From now on, let
\begin{align}\label{Sgaus}
  S(A)=\E\left(\sum_i a_iX_i^2\right)^p 
\end{align}
\autoref{lemgaus} implies that $S(A)$ a symmetric function over rational numbers, expanded in monomial symmetric basis $m_\lambda$. But \autoref{lemdouble} gives
\begin{align*}
  S(2A)=2^pp!h_p(A).
\end{align*}
Note that direct proof of this fact would be tedious, since monomial symmetric polynomials $m_\lambda$ do not interact in a simple way with automorphism $\theta$. It is also not trivial to transform from monomial to power sum basis. 

Invertibility of $\theta$ gives
\begin{align}\label{stoh}
  S(A)=2^p p! h_p\left(\frac{1}{2}A\right).
\end{align}
This provides a very concise formula for $S(A)$. Unfortunately, action of $\theta$ on complete homogeneous polynomials is not simple. Hence we will rewrite this expression using power sum basis.

\subsection{Cycle index in power sum basis}

For any finite group $G$ of permutations we can define a so-called cycle index $Z(G,c_1,\dots,c_p)$ in the following way. Consider a sequence of dummy variables $c_1,\dots,c_p$, and  use $c_i$ to denote cycle of length $i$. Every permutation can now be represented using a monomial of degree equal to the number of cycles in this permutation. The cycle index is an average of those monomials over all elements of the group. Note that $c_i$ corresponds to cycles of length $i$, hence replacing $c_i$ with power sum polynomial $p_i(A)$ leads to a homogeneous symmetric polynomial in alphabet $A$ (variables $a_1,\dots, a_n$). 

We are interested in the cycle index of the symmetric group $S_p$. We get a polynomial, whose coefficients count number of permutations with the cycle lengths given by $\lambda$ (denote these coefficients by $q_\lambda$). We have
\begin{align}\label{Zs}
  Z\left(S_p,a_1+\dots+a_p,a_1^2+\dots+a_n^2,\dots,a_1^p+\dots+a_n^p\right)=\frac{1}{p!}\sum_\lambda q_\lambda p_\lambda(A).
\end{align}
Simple counting argument shows that if $\lambda$ has $j_k$ cycles of length $k$ then
\begin{align}\label{sl}
  q_\lambda=\frac{p!}{\prod_{k=1}^p k^{j_k} j_k!}.
\end{align}

However, P\'olya's enumeration theorem (generalized Burnside's lemma, see \cite{R27,P37} or any modern enumerative combinatorics book) states that this cycle index is a generating function of colorings of $p$ objects with $n$ colors (monomial $\prod_{i=1}^n a_i^{k_i}$ encodes $k_i$ elements of color $i$). But the colorings are considered the same if there is a permutation of objects that maps one coloring to the other. This last restriction clearly implies that there is exactly one coloring per monomial. Hence
\begin{align}\label{Zh}
  Z\left(S_p,a_1+\dots+a_p,a_1^2+\dots+a_n^2,\dots,a_1^p+\dots+a_n^p\right)= h_p(A).
\end{align}
Therefore $s(A)$ defined by \eqref{Sgaus} satisfies
\begin{align}
  S(A)&=2^pp!h_p\left( \frac{1}{2}A \right)=4^p\sum_\lambda q_\lambda p_\lambda\left( \frac{1}{2}A \right)=2^pp!\sum_\lambda q_\lambda \prod_{k\in \lambda} \frac{1}{2} p_k(A)\nonumber
  \\&=2^pp! 
  Z\left(S_p,\frac{a_1+\dots+a_n}{2},\frac{a_1^2+\dots+a_n^2}{2},\dots,\frac{a_1^p+\dots+a_n^p}{2}\right)\label{sZ}
\end{align}
To find $S(A)$ one needs to replace $c_i$ in the abstract cycle index of $S_p$ with $p_i(A)/2$.

\subsection{Schatten norms and formulas for polynomials $F_p$.}
Let $T$ be an arbitrary rectangular matrix, and $s(T)$ the multiset of its singular values. The Schatten norm of $T$ of order $2n$ equals
\begin{align*}
  \|T\|_{2n}^{2n} = \sum_{\sigma\in s^2(T)} \sigma^{2n}= p_n(s^2(T)),
\end{align*}
where $p_n$ is the power sum polynomial of order $n$. This description fits perfectly into the framework of the power sum basis for symmetric polynomials discussed in the previous section. However, the same norm can also be calculated using matrix trace
\begin{align*}
  \|T\|_{2n}^{2n} = \tr\left((T^\dagger T)^n\right) = \tr\left( (TT^\dagger)^n\right),
\end{align*}
where $T^\dagger$ denotes the transposed matrix. This formula allows for calculating Schatten norms without finding singular values of $T$. One can also choose the order of multiplication that gives smaller matrices. This characterization can be useful when matrix $T$ involves further unknowns, in which case finding singular values would be hard.

Now we return to polynomials $F_p(s^2(T))$. \autoref{lemgaus} and \eqref{fpgaus} provide a combinatorial formula:
\begin{lemma}\label{lemmafpm}
  For any (possibly rectangular) matrix $T$
  \begin{align*}
    F_p(s^2(T))= 
    \frac{p!}{4^p (d/2)_p}\sum_{\lambda}\left(\prod_{k_i\in\lambda}\binom{2k_i}{k_i}\right)m_\lambda(s^2(T)).
  \end{align*}
\end{lemma}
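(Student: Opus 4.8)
The plan is to read off the formula directly from the probabilistic identity in \eqref{fpgaus} together with the Gaussian moment computation of \autoref{lemgaus}. Recall that \eqref{fpgaus} expresses $F_p$ in terms of an expectation:
\begin{align*}
  F_p(E^2)=\frac{1}{2^p(d/2)_p}\,\E\Bigl(\sum_i e_i^2 X_i^2\Bigr)^p,
\end{align*}
where the $X_i$ are independent standard normals and $E$ is the diagonal matrix of singular values; more precisely, by \eqref{oversphere} and \eqref{intOd} the polynomial $F_p$ evaluated at the eigenvalue multiset of $T^\dagger T$, i.e.\ at $s^2(T)$, is given by the same formula with $e_i^2$ replaced by the squared singular values. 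So the essential content is purely the substitution of one identity into another.

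First I would invoke \autoref{corframes} (or directly \eqref{intOd}) to identify $F_p(s^2(T))$ with $\frac{1}{|S^{d-1}|}\int_{S^{d-1}}|E\theta|^{2p}\,dS(\theta)$, where $E$ is diagonal with the singular values of $T$ on the diagonal. Then I would pass from the sphere integral to the Gaussian integral exactly as in the derivation of \eqref{fpgaus}, using $\int_0^\infty e^{-r^2/2}r^{2p+d-1}\,dr = 2^{p-1+d/2}\Gamma(p+d/2)$ to extract the radial part, leaving
\begin{align*}
  F_p(s^2(T)) = \frac{1}{2^p(d/2)_p}\,\E\Bigl(\sum_i \sigma_i^2 X_i^2\Bigr)^p,
\end{align*}
where $\sigma_i$ are the singular values. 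Next I would apply \autoref{lemgaus} with $a_i = \sigma_i^2 = s_i^2(T)$, which immediately yields
\begin{align*}
  \E\Bigl(\sum_i s_i^2(T) X_i^2\Bigr)^p = \frac{p!}{2^p}\sum_\lambda\Bigl(\prod_{k_i\in\lambda}\binom{2k_i}{k_i}\Bigr)m_\lambda(s^2(T)).
\end{align*}
Combining the two displays gives the claimed formula, with the factor $4^p = 2^p\cdot 2^p$ in the denominator coming from the two separate factors of $2^p$.

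There is essentially no obstacle here: the lemma is a bookkeeping corollary once \eqref{fpgaus} and \autoref{lemgaus} are in hand. The only small point to be careful about is the alphabet: \autoref{lemgaus} is stated for a finite list of reals $a_1,\dots,a_n$, while $s^2(T)$ is a multiset of $d$ (or fewer, if $T$ is rectangular with fewer rows) nonnegative numbers; taking $n=d$ and padding with zeros if necessary reconciles the two, and since $m_\lambda$ is a symmetric polynomial the value is independent of the labeling of the multiset. One should also note that although \eqref{fpgaus} was written for a square diagonal $E$, the reduction via \autoref{corframes} shows the relevant input is the multiset $s^2(T)$ of eigenvalues of $T^\dagger T$, which makes sense for rectangular $T$, so the statement is valid in that generality.
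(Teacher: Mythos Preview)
Your proposal is correct and matches the paper's approach exactly: the paper itself introduces the lemma with the sentence ``\autoref{lemgaus} and \eqref{fpgaus} provide a combinatorial formula,'' offering no further proof, and your argument simply spells out that substitution (including the observation that $4^p=2^p\cdot 2^p$). The extra remarks you add about padding the alphabet with zeros and the rectangular case are reasonable clarifications but not needed for the paper's purposes.
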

This result is easy to apply when $p$ is small. In particular, when $p=2$ the generalized tight $2$-frame identity reads
  \begin{align}
    \frac{1}{|G|}\sum_U |TUx|^4  &= \frac{1}{d(d+2)}\left(3\sum_{i=1}^d s_i^4 + 2\sum_{i<j} s_i^2s_j^2  \right)|x|^4\nonumber
    \\&=
    \frac{\|T\|_2^4+2\|T\|_4^4}{d^2+2d}|x|^4,\label{schatten4}
  \end{align}
  where $s_i$ are singular values of $T$. Note that the second formula involving Schatten norms allows us to find the value of $F_p$ without finding singular values of $T$. 
  When $T$ is an identity matrix, the Schatten $p$-norm equals $\sqrt[p]{d}$. Hence the $T$ dependent term reduces to $1$, as expected. For $p=3$ we get  
  \begin{align*}
  \frac{1}{|G|}\sum_U |TUx|^6  &= \frac{1}{ d(d+2)(d+4)}\left(15\sum_{i=1}^d s_i^6 + 9\sum_{i<j} (s_i^4s_j^2+s_j^4s_i^2) + 6\sum_{i<j<k} s_i^2 s_j^2 s_k^2 \right)|x|^6
    \\&=
    \frac{\|T\|_2^6+6\|T\|_4^4\|T\|_2^2+8\|T\|_6^6}{d^3+6d^2+8d}|x|^6,
\end{align*} 
Alert reader may notice that coefficients in the numerator are the same as in the denominator. This is only true for $p=2$ and $p=3$, since the number of partitions of those integers equals $p$. For $p=4$, we already have five partitions, including two of length two, $4=3+1=2+2$, and it is no longer clear which one should be used. To obtain Schatten-type formula for $F_p$ we must use the symmetric polynomial approach developed in the previous section. 

For arbitrary $d$ and arbitrary $p$ the following Schatten-type formula seems most convenient, especially when finding singular values is not practical.
\begin{theorem}\label{thmcycle}
  For any (possibly rectangular) matrix $T$
  \begin{align}
    F_p(s^2(T))&=
    \frac{p!}{(d/2)_p} Z\left(S_p,\frac{\|T\|_2^2}{2},\frac{\|T\|_4^2}{2},\dots,\frac{\|T\|_{2p}^{2p}}{2}\right)
    \\&=
    \frac{p!}{(d/2)_p}\sum_{j_1+2j_2+\dots+pj_p=p} \prod_{k=1}^p \frac{\|T\|_{2k}^{2kj_k}}{(2k)^{j_k}j_k!}.\label{fpcycles}
  \end{align}
  where $Z(S_p,c_1,\dots,c_p)$ is the cycle index for the symmetric group $S_p$.
\end{theorem}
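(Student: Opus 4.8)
The plan is to combine the probabilistic identity of \autoref{lemgaus} with the algebra-of-symmetric-functions manipulations culminating in \eqref{sZ}, together with the Schatten norm bookkeeping. Recall from \eqref{fpgaus} that
\begin{align*}
  F_p(s^2(T)) = \frac{1}{2^p(d/2)_p}\,\E\left(\sum_i s_i^2 X_i^2\right)^p = \frac{1}{2^p(d/2)_p}\,S(A),
\end{align*}
where the alphabet is $A=\{s_1^2,\dots,s_d^2\}$, i.e. the multiset $s^2(T)$, and $S(A)$ is the symmetric function defined in \eqref{Sgaus} with $a_i=s_i^2$. First I would invoke \eqref{sZ}, which already expresses $S(A)$ as $2^p p!$ times the cycle index $Z(S_p, p_1(A)/2,\dots,p_p(A)/2)$ evaluated with $c_k \mapsto p_k(A)/2$. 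So the only remaining task is to identify $p_k(A)$ with the appropriate Schatten norm.

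The key observation, already recorded at the start of \autoref{secfp}'s Schatten subsection, is that $p_k(s^2(T)) = \sum_{\sigma} \sigma^{2k} = \|T\|_{2k}^{2k}$, since the singular values of $T$ are the (nonnegative) square roots of the elements of $s^2(T)$. Substituting $p_k(A)=\|T\|_{2k}^{2k}$ into $Z(S_p,p_1(A)/2,\dots,p_p(A)/2)$ yields
\begin{align*}
  F_p(s^2(T)) = \frac{2^p p!}{2^p (d/2)_p}\, Z\left(S_p, \frac{\|T\|_2^2}{2}, \frac{\|T\|_4^4}{2},\dots,\frac{\|T\|_{2p}^{2p}}{2}\right),
\end{align*}
which is the first displayed formula in the theorem (the $2^p$ cancels, leaving $p!/(d/2)_p$). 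For the second, explicit form I would expand the cycle index of $S_p$ in its monomial form: by \eqref{sl} the permutations with $j_k$ cycles of length $k$ (for each $k$, with $\sum_k k j_k = p$) number $p!/\prod_k k^{j_k} j_k!$, and each such permutation contributes the monomial $\prod_k c_k^{j_k}$. Hence
\begin{align*}
  Z(S_p,c_1,\dots,c_p) = \frac{1}{p!}\sum_{\sum_k k j_k = p} \frac{p!}{\prod_k k^{j_k} j_k!}\prod_k c_k^{j_k} = \sum_{\sum_k k j_k = p}\prod_k \frac{c_k^{j_k}}{k^{j_k}j_k!}.
\end{align*}
Plugging $c_k = \|T\|_{2k}^{2k}/2$ gives $\prod_k \frac{(\|T\|_{2k}^{2k})^{j_k}}{2^{j_k}k^{j_k}j_k!} = \prod_k \frac{\|T\|_{2k}^{2kj_k}}{(2k)^{j_k}j_k!}$, and multiplying by the prefactor $p!/(d/2)_p$ yields \eqref{fpcycles}.

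I do not expect a serious obstacle here, since the heavy lifting — the probabilistic lemmas, the doubling automorphism $\theta$, and the identification \eqref{Zh} of the cycle index with the complete homogeneous polynomial — has already been carried out in the preceding subsections; the theorem is essentially a repackaging of \eqref{sZ} in Schatten-norm language. The one point requiring a little care is keeping the exponents straight: $p_k(s^2(T))$ is a sum of the $k$-th powers of the \emph{squared} singular values, so it equals $\|T\|_{2k}^{2k}$ (not $\|T\|_k^k$), and the argument of $Z$ in the first displayed formula is $\|T\|_{2k}^{2k}/2$ — I would double-check the $p=2$ and $p=3$ cases against the already-displayed formulas \eqref{schatten4} and the degree-6 identity to confirm the normalization $p!/(d/2)_p$ and the absence of stray powers of $2$. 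A secondary bookkeeping remark worth including is that when $T$ is square and invertible these are genuine singular values, but the derivation used nothing beyond $T$ having $d$ columns, so the rectangular case is covered verbatim.
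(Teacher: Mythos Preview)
Your proposal is correct and follows exactly the same route as the paper's own proof, which simply cites \eqref{sZ}, \eqref{Sgaus}, \eqref{fpgaus} for the first formula and \eqref{Zs}, \eqref{sl} for the second; you have merely filled in the bookkeeping that the paper leaves implicit. Your identification $p_k(s^2(T))=\|T\|_{2k}^{2k}$ is the right one, and incidentally confirms that the second argument in the displayed cycle index should read $\|T\|_4^4/2$ rather than $\|T\|_4^2/2$ as printed in the statement.
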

\begin{proof}
  The first formula follows from \eqref{sZ} combined with \eqref{Sgaus} and \eqref{fpgaus}. To get the second formula, start with \eqref{Zs}, decompose each $\lambda$ into $j_k$ cycles of length $k$, and apply formula \eqref{sl} for $q_\lambda$.
\end{proof}

A very restricted choice of partitions in dimension 2 (any monomial symmetric polynomial involving partition with 3 or more pieces equals $0$) allows for a simpler singular value based formula for arbitrary value of $p$
\begin{align*}
  \frac{1}{|G|}\sum_U |TUx|^{2p}  &= 
  \frac{1}{4^p}\sum_{k=0}^p\binom{2k}{k}\binom{2p-2k}{p-k} s_1^ks_2^{p-k}.
\end{align*}
Unfortunately, it seems that the simplest form of this series is a hypergeometric function in $s_1/s_2$. Even taking $s_1=s_2=1$ (identity matrix), leads to a nontrivial combinatorial identity
\begin{align*}
  \sum_{k=0}^p\binom{2k}{k}\binom{2p-2k}{p-k}=4^p.
\end{align*}

\subsection{A note about generalized $\chi^2$ distributions}\label{chisquared}
Let $X_1,\dots,X_n$ be independent, centered Gaussian random variables with variances $\sigma_i$. \autoref{lemgaus} provides a formula for the $p$-moment of $\sum_i X_i^2$, a special case of the generalized $\chi^2$ distribution. 

On the other hand, \autoref{lemdouble} gives a much simpler formula for a related $\chi^2$, with each Gaussian repeated twice. One can also think about this case as a sum of the squared magnitudes of complex gaussian random variables. The relation between this distribution and exponential random variables was the key to a simplicity of \autoref{lemdouble}. 

Symmetric polynomial manipulations, in particular formula \eqref{stoh}, allow us to find a concise formula for $p$-moments of the real case $\sum_i X_i^2$ in terms of the complex (double) $\chi^2$. Combining this theoretical result with cycle index decomposition \eqref{Zh} leads to another way of calculating the same moments. Finally, symmetric polynomial manipulations, including $\theta$ authomorphism, are easy to perform using SAGE computer algebra system, providing a convenient way of finding these moments (see section below). 
\subsection{SAGE code}\label{secsage}
In this section we explore possibility to quickly evaluate polynomials $F_p$ using SAGE Computer Algebra System \cite{Sage}. We used Sage Cell Webserver \cite{sagecell} to perform all calculations and to provide direct links to these calculation.

The cycle index of the symmetric group is a well known polynomial implemented in SAGE via GAP library. In particular, one can obtain the cycle index (with powersums as variables) using the
\href{https://sagecell.sagemath.org/?z=eJzLU7BVMOHlCq7MzU0tKcpMdi_KLy3QyNPUS65MzkmNz8xLSa3Q0NQryUgtSdQw1DfSBACOWw_t&lang=sage}{following SAGE code}

\lstset{
	language=Python,
	showstringspaces=false,
        breaklines=true,
        frame=tb
	xleftmargin=\parindent,
        basicstyle=\scriptsize\ttfamily,
	keywordstyle=\bfseries\color{green!40!black},
        identifierstyle=\color{blue!80!black},
	commentstyle=\itshape\color{purple!80!black},
	stringstyle=\itshape\color{orange!90!black},	
	escapeinside={*@}{@*}
}
\begin{lstlisting}
n = 4 # frame order
cycle_index = SymmetricGroup(n).cycle_index()
Z = cycle_index.theta(1/2) # apply theta automorphism
print Z
\end{lstlisting}
The output
\begin{lstlisting}
1/384*p[1,1,1,1] + 1/32*p[2,1,1] + 1/32*p[2,2] + 1/12*p[3,1] + 1/8*p[4]  
\end{lstlisting}
is the expansion of the cycle index from \autoref{thmcycle} in terms of powersum basis, after the application of the $\theta$ automorphism. E.g. $p[2,1,1]$ corresponds to $\|T\|_{4}^{4}(\|T\|_{2}^{2})^2$ in the formula \eqref{fpcycles} for $F_p(s^2(T))$.

Another way of obtaining the cycle index is to explicitly convert complete homogeneous polynomial to powersum basis using \eqref{Zh}. In SAGE (Symmetrica package), \href{https://sagecell.sagemath.org/?z=eJzLU7BVMOHlCq7MzU0tKcpMdivNSy7JzM8r1ggM1NTLzMtKTS6JL87ILyrJSMxLKdaIVi9Q11HPUI_V5OUq0MiIzovV1CvJSC1J1DDUN9IEAAMqGR4=&lang=sage}{the following input} produces the same cycle index $Z$ as above
\begin{lstlisting}
n = 4 # frame order
SymmetricFunctions(QQ).inject_shorthands(["p","h"])
Z = p(h[n]).theta(1/2) # convert h to p, and apply theta authomorphism
\end{lstlisting}

Abstract power sum polynomials in Sage can be expanded in any number of variables (SAGE Symmetrica library). Using cycle index $Z$ we just calculated, we obtain singular value based expansion for $F_p$ given by \autoref{lemmafpm} (or \eqref{fpcycles} with Schatten norms expressed using singular values) by \href{https://sagecell.sagemath.org/?z=eJzLU7BVSAFiE16utMTkkvyizMQcjTxN_aLM4sy89HiEWIq-kU6eplZwZW5uaklRZrJ7UX5pAVClXnJlck5qfGZeSmqFhqZeSUZqSaKGob6Rpl5qRUFiXopGiiYAeG8hTg==&lang=sage}{executing}
\begin{lstlisting}
d = 4 # dimension
Fp = factorial(n)/rising_factorial(d/2,n)*Z.expand(d) # expand in dimension d
\end{lstlisting}

Let us also point out that Molien series that counts invariant polynomials for permutation groups is also implemented in Sage. Unfortunately, the series uses the representation of the group as permutation matrices, hence in a too-high dimension. However, one can implement Molien's series using Molien's theorem and Coxeter groups acting as reflections (with the help of MPFI, GAP, MPFR, ginac, GMP and Maxima libraries). This allows one to quickly check if a given group of isometries admits higher order frames.  In particular, to check uniqueness of the invariants for the group of rotations (no reflections) of a tetrahedron 
\href{https://sagecell.sagemath.org/?z=eJxlj8EKwjAQRO8F_yH0lEiNFr3mIB5y6heUUkLdhkialE0q7d-bVkHQ2zK7s_NGipufIQJK9NNI6_yaF-emMMNoYQAXVTTeiRyht9Ctc852mRSVimjmt0dyDS5Q9reoNek9Ek2MI5JbEyJlxPRE82G7oozfIWlClE0yI4xBSN5595i06pa2syqENskI4Y3yhJSzyypvDTgRpoHWFhzFX1OCOYa1g-kXWh7mPX6iNiBcgda0hh0l93gH_H7lUS3WI52LU3FJ6gtfmGBx&lang=sage}{use the following code}
\begin{lstlisting}
G=CoxeterGroup(["A",3],implementation="reflection") # tetrahedral group in R3
G=MatrixGroup(G.gens()) # as matrix group
G=MatrixGroup([g for g in G.list() if g.matrix().det()==1]) # rotations only
reps=G.conjugacy_class_representatives()
# Molien's theorem
M=sum([len(r.conjugacy_class())/simplify(1-x*r).det() for r in reps])/G.order()
M.taylor(x,0,4) # expand as series
\end{lstlisting}
This outputs
\begin{lstlisting}
2*t^4 + t^3 + t^2 + 1
\end{lstlisting}
We have exactly one invariant for degrees $0$, $2$ and $3$, but two invariants of degree $4$. 

Finally, note that we can also find all fundamental invariants in Sage and check their degrees (using PARI, MPFI, Singular, GAP, FLINT, MPFR, ginac, GMP and NTL libraries). In particular for the rotation group of the icosahedron \href{https://sagecell.sagemath.org/?z=eJxljT0LgzAURXfB_xCcEpBAkY6ZhKaLk6NICfoMD2ISnmmx_77pB-3Q7X2ce65WbdghAWkK18iH6lzVzVjjGh2s4JNJGLyqCBYH03OuRFl0vepMItz7aCbg7X1yIYUVpxOCm_lR1E3dZE5_sLdbSwt-4-LvMXQ9t4ItgZhl6JmWDrfEBcOFWbm-SC7kDPmm1GHMAvQ3Q2h82pSW3-WSC4BMCpRrymKIOWMJgL_l8Sn_JceyeAAmzFQl&lang=sage}{we have}
\begin{lstlisting}
G=CoxeterGroup(["H",3],implementation="reflection") # icosahedral group in R3
MS=MatrixSpace(CyclotomicField(5),3,3) # icosahedron uses 5th root of unity
G=MatrixGroup([MS(g) for g in G.gens()]) # change the base ring for generators
G=MatrixGroup([g for g in G.list() if g.matrix().det()==1]) # rotations only
invariants=G.invariant_generators() 
print [p.degree() for p in invariants]
\end{lstlisting}
And we get the following degrees
\begin{lstlisting}
[2, 6, 10, 15]
\end{lstlisting}
Note however, that the representation using reflections (generators satisfy $M^2=\Id$) does not give the subgroup of the orthogonal group (generators should satisfy $MM^\dagger=\Id$). Instead, one gets a group that acts on a sheared regular polytope. Therefore the invariants will not be the same as the invariants we use in this paper, but the degrees coincide. We see that the group of rotations of icosahedron admits $2$-frames, but not $3$-frames, just like its full symmetry group.

\section{(Non-tight) $p$-frames for any irreducible groups of symmetries}\label{secnontight}
Let $G$ be an irreducible group of symmetries. This group may or may not admit higher order frames, but for any matrix $T$ we have the tight $1$-frame identity
\begin{align}\label{Tframeid}
  \frac{1}{|G|}\sum_U |TU x|^2=\frac{1}{d}\|T\|_2^2 |x|^2.
\end{align}
We also have the inner product version of this identity, \autoref{matrixframe} with $p=1$.
Note that if $T$ has just one row, this reduces to classical tight frame identity for vectors. Note also that with many rows we are simply adding one tight frame identity per row. Upper bounds for eigenvalues obtained in \cite{LS11} rely on this identity.

For a given multiplier $f$, a special case of its transformed version defined in \eqref{newmult} reads
\begin{align*}
  F[\Phi_1,0,T](t) = \frac{1}{d}\tr \Phi_1(t T^{-\dagger}T^{-1}).
\end{align*}
Recall that the trace of a function of a matrix means the trace of the spectrally defined function of the matrix (see the paragraph above \eqref{newmult}).

\begin{theorem}\label{nontight}
  For any irreducible group of symmetries $G$ acting on $\Rd$, any convex function $\Phi$ and any concave function $\Psi$
  \begin{align*}
    \frac{1}{|G|}\sum_U \Psi(\Phi(|T^{-1}U x|^2))\le \Psi\circ F[\Phi,0,T](|\xi|^2),\\
    \frac{1}{|G|}\sum_U \Phi(\Psi(|T^{-1}U x|^2))\ge \Phi\circ F[\Psi,0,T](|\xi|^2).
  \end{align*}
  Furthermore, we get equalities if and only if $T$ is a multiple of an orthogonal matrix, or both $\Psi$ and $\Phi$ are linear.
\end{theorem}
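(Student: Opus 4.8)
The plan is to deduce both inequalities from the single ``matrix Jensen'' estimate
\begin{align*}
  \frac{1}{|G|}\sum_{U\in G}\Phi\bigl(|T^{-1}Ux|^2\bigr)\le F[\Phi,0,T]\bigl(|x|^2\bigr)\qquad(\Phi\ \text{convex}),
\end{align*}
valid for every convex $\Phi$, with the inequality reversed when $\Phi$ is concave. Granting this, the first assertion of \autoref{nontight} follows by applying Jensen's inequality to the concave outer function, $\tfrac1{|G|}\sum_U\Psi(\Phi(|T^{-1}Ux|^2))\le\Psi\bigl(\tfrac1{|G|}\sum_U\Phi(|T^{-1}Ux|^2)\bigr)$, and then using monotonicity of $\Psi$ together with the matrix Jensen estimate to bound the right-hand side by $\Psi(F[\Phi,0,T](|x|^2))$. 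The second assertion is the mirror image: Jensen for the convex $\Phi$ reverses the inequality, and the concave form of the matrix Jensen estimate (with $\Psi$ in the role of $\Phi$), combined with monotonicity of $\Phi$, finishes it. As in the averaging \eqref{average1}, the vectors written $x$ and $\xi$ play the same role.

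To prove the matrix Jensen estimate, write the spectral decomposition $T^{-\dagger}T^{-1}=VEV^{-1}$ with $V$ orthogonal and $E=\mathrm{diag}(e_1,\dots,e_d)$ the eigenvalues of $T^{-\dagger}T^{-1}$, so that $F[\Phi,0,T](t)=\tfrac1d\tr\Phi(tE)=\tfrac1d\sum_i\Phi(te_i)$. Put $y:=V^{-1}Ux$; this is an isometry of $x$, so $|y|^2=|x|^2=:t$, and arguing exactly as in the trace computations in the proofs of \autoref{theoremframes} and \autoref{corframes} one gets $|T^{-1}Ux|^2=\langle T^{-\dagger}T^{-1},P_{Ux}\rangle_{HS}=\langle E,P_y\rangle_{HS}=\sum_i e_iy_i^2=\sum_i (te_i)\,w_i^{(U)}$, where $w_i^{(U)}:=y_i^2/t\ge0$ and $\sum_i w_i^{(U)}=1$. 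Discrete Jensen for the convex $\Phi$ gives $\Phi(|T^{-1}Ux|^2)\le\sum_i w_i^{(U)}\Phi(te_i)$ for each $U$; averaging over $G$ and invoking the tight $1$-frame identity \eqref{Tframeid} for the one-row matrix $(Ve_i)^\dagger$ (legitimate because $G$ is irreducible), which reads $\tfrac1{|G|}\sum_U\langle Ve_i,Ux\rangle^2=\tfrac1d|x|^2$, i.e.\ $\tfrac1{|G|}\sum_U w_i^{(U)}=\tfrac1d$, collapses the bound to $\tfrac1d\sum_i\Phi(te_i)=F[\Phi,0,T](t)$. Replacing convexity of $\Phi$ by concavity reverses each step and gives the reverse estimate.

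For the equality statement, the easy direction is clear: if $T=cO$ with $O$ orthogonal then $E$ is a multiple of the identity, $|T^{-1}Ux|^2$ is independent of $U$, and every Jensen step above is an identity; and if $\Phi$ and $\Psi$ are both affine then the discrete Jensen step and the outer Jensen step are identities for any $T$. For the converse, suppose $T$ is not a scalar multiple of an orthogonal matrix, so $T^{-\dagger}T^{-1}$ has at least two distinct eigenvalues; note also that $U\mapsto|T^{-1}Ux|^2$ cannot be constant for every $x$, since taking $U=\Id$ would then force $T^{-\dagger}T^{-1}=\tfrac1d\|T^{-1}\|_2^2\,\Id$. Picking $x$ with all coordinates of $V^{-1}x$ nonzero makes the weights $w_i^{(\Id)}$ positive across eigenvalues of different size, so the discrete Jensen step is strict unless $\Phi$ is affine on the interval swept by the $te_i$; letting $t=|x|^2$ vary upgrades this to $\Phi$ being affine on $(0,\infty)$. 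Similarly, picking $x$ on which $U\mapsto|T^{-1}Ux|^2$ is nonconstant makes the outer Jensen step strict unless $\Psi$ is affine. Hence equality for all $x$ forces both $\Phi$ and $\Psi$ to be affine. I expect this converse --- choosing the test vectors and carefully disposing of the borderline cases in which $\Phi$ or $\Psi$ is affine only on the range actually attained --- to be the only genuinely delicate part; everything else reduces to Jensen's inequality and the tight $1$-frame identity.
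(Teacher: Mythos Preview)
Your approach is essentially the paper's: pointwise Jensen for the inner $\Phi$ (viewing $|T^{-1}Ux|^2$ as a convex combination of the eigenvalues of $T^{-\dagger}T^{-1}$ with weights $y_i^2/|x|^2$), Jensen over the group average for the outer concave $\Psi$, and the tight $1$-frame identity to collapse the averaged weights to $1/d$. The paper packages the inner step via the Hilbert--Schmidt inner product and the matrix form of the $1$-frame identity \eqref{matrixframe} with $p=1$ rather than working coordinate-wise as you do, and your explicit appeal to monotonicity of $\Psi$ is present implicitly in the paper's chain of inequalities as well.
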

As an immediate corollary we get a bound for the constants $F_p$ in tight $p$-frame identities.
\begin{corollary}\label{cornontight}
  Polynomials $F_p$ used in the tight $p$-frame identities (\autoref{theoremframes} and \autoref{corframes}) satisfy
  \begin{align*}
    \frac1{d^p}{\|T\|_{2}^{2p}}\le F_{p}(s^2(T))\le \frac1d{\|T\|_{2p}^{2p}}.
  \end{align*}
  Hence, orbits of any irreducible group $G$ form (not necessarily tight) $p$-frames.
\end{corollary}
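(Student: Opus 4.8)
The plan is to obtain both inequalities as a specialization of \autoref{nontight} to power functions, after a harmless relabeling. By \autoref{corframes}, when $G$ has a unique invariant polynomial of degree $2p$, the quantity to be bounded is the orbit average $\frac{1}{|G|}\sum_U|TUx|^{2p}=F_p(s^2(T))\,|x|^{2p}$. In \autoref{nontight} the transformation appearing inside the averages is the \emph{inverse} of the named matrix, so I invoke it with $T^{-1}$ in place of its ``$T$'' — legitimate, since $T^{-1}$ also ranges over all invertible matrices. Then $|T^{-1}Ux|^2$ turns into $|TUx|^2$, and the auxiliary multiplier of \eqref{newmult} becomes $F[\Phi,0,T^{-1}](t)=\frac1d\tr\Phi\bigl(t\,TT^{\dagger}\bigr)$, using $(T^{-1})^{-1}(T^{-1})^{-\dagger}=TT^{\dagger}$.

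For the upper bound I take $\Phi(t)=t^{p}$ (convex, as $p\ge1$) and $\Psi=\mathrm{id}$ (linear, hence admissible as the concave function). The first inequality of \autoref{nontight} becomes
\[
\frac{1}{|G|}\sum_U|TUx|^{2p}\;\le\;F[t^{p},0,T^{-1}](|x|^2)\;=\;\frac{|x|^{2p}}{d}\,\tr\bigl((TT^{\dagger})^{p}\bigr)\;=\;\frac{|x|^{2p}}{d}\,\|T\|_{2p}^{2p},
\]
the last step being the trace formula $\|T\|_{2p}^{2p}=\tr((TT^{\dagger})^{p})$ for the Schatten norm. Dividing by $|x|^{2p}$ and inserting \autoref{corframes} gives $F_p(s^2(T))\le\frac1d\|T\|_{2p}^{2p}$. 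For the lower bound I use the second inequality of \autoref{nontight}, now with $\Psi=\mathrm{id}$ in the role of the concave function and $\Phi(t)=t^{p}$ convex. Since $F[\mathrm{id},0,T^{-1}](|x|^2)=\frac{|x|^2}{d}\tr(TT^{\dagger})=\frac{|x|^2}{d}\|T\|_{2}^{2}$, this yields
\[
\frac{1}{|G|}\sum_U|TUx|^{2p}\;\ge\;\Bigl(\tfrac{|x|^2}{d}\,\|T\|_{2}^{2}\Bigr)^{p}\;=\;\frac{|x|^{2p}}{d^{p}}\,\|T\|_{2}^{2p},
\]
whence $F_p(s^2(T))\ge\frac1{d^{p}}\|T\|_{2}^{2p}$. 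The two bounds together are the asserted sandwich, consistent with the power-mean inequality $\|T\|_2^{2p}/d^{p}\le\|T\|_{2p}^{2p}/d$. Moreover the two displays hold verbatim for \emph{any} irreducible $G$ (no uniqueness of invariants is used in \autoref{nontight}), so they trap $\frac1{|G|}\sum_U|TUx|^{2p}$ between $A|x|^{2p}$ and $B|x|^{2p}$ with explicit frame bounds $A=\frac{|G|}{d^{p}}\|T\|_2^{2p}$, $B=\frac{|G|}{d}\|T\|_{2p}^{2p}$; that is, the orbit is a (generally non-tight) $p$-frame, which is the remaining claim.

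There is no genuinely hard step: the content is \autoref{nontight} itself, and the corollary is merely its evaluation at $\Phi(t)=t^{p}$, $\Psi=\mathrm{id}$. The only points needing attention are the $T\leftrightarrow T^{-1}$ bookkeeping (with $(T^{-1})^{-1}(T^{-1})^{-\dagger}=TT^{\dagger}$), the observation that a linear function is simultaneously convex and concave so that \emph{both} inequalities of \autoref{nontight} are available, and the fact that the stated bound on $F_p(s^2(T))$ — as opposed to the bound on the orbit average, which needs nothing — uses the uniqueness hypothesis only through \autoref{corframes}. As an alternative that bypasses \autoref{nontight} one can read the bounds directly off \eqref{oversphere}: with $F_p(s^2(T))=\frac1{|S^{d-1}|}\int_{S^{d-1}}\bigl(\sum_i s_i^2\theta_i^2\bigr)^p\,dS(\theta)$ and $\int_{S^{d-1}}\theta_i^2\,dS=|S^{d-1}|/d$, the upper bound is Jensen applied to the convex combination $\sum_i\theta_i^2\,s_i^2$ with weights $\theta_i^2$, and the lower bound is Jensen applied to the random variable $\theta\mapsto\sum_i s_i^2\theta_i^2$ under the uniform measure on $S^{d-1}$.
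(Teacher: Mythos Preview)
Your proof is correct and follows exactly the route the paper intends: the corollary is stated there as ``an immediate corollary'' of \autoref{nontight}, and your specialization $\Phi(t)=t^{p}$, $\Psi=\mathrm{id}$ (with the $T\leftrightarrow T^{-1}$ relabeling) is precisely that immediate deduction, spelled out in full. One small slip: in your last paragraph the frame bounds should be $A=\tfrac{1}{d^{p}}\|T\|_{2}^{2p}$ and $B=\tfrac{1}{d}\|T\|_{2p}^{2p}$ (or multiply both by $|G|$ if you drop the averaging), not one with $|G|$ and one implicitly without; and your alternative via \eqref{oversphere} is not really a separate argument---it is the same two applications of Jensen that drive the proof of \autoref{nontight}, just stripped of the matrix packaging.
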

\begin{proof}[Proof of \autoref{nontight}]
  Note that we can assume that $|x|=1$ by considering $|x|$ a part of the matrix $T$. First consider the case $\Phi\ge 0$.

  Write the singular value decomposition of $T^{-1}=WSV$ with diagonal matrix $S$ and orthogonal matrices $W$ and $V$. If $|v|=1$, then the inner product $\langle S^2 v,v\rangle$ can be interpreted as a sum over a probability measure given by $v_i^2$. Therefore Jensen inequality gives
  \begin{align*}
    \Phi(|T^{-1}Ux|^2)&=\Phi(\langle S^2\, VUx,VUx\rangle)\le \langle \Phi(S^2)\, VUx,VUx\rangle
    \\&=\tr (\Phi(S^2)VUxx^\dagger U^\dagger V^\dagger)
    =\tr(V^\dagger \Phi(S^2) VP_{Ux})=\langle V^\dagger \Phi(S^2) V,P_{Ux}\rangle_{HS}
  \end{align*}
  Note that we get equality if and only if $S$ is a multiple of identity. Note also that we can replace $\Phi$ with $\Phi+l$ where $l$ is linear (not necessarily nonnegative). Concavity of $\Psi$, with group averaging as a convex combination, gives
  \begin{align*}
    \frac{1}{|G|}\sum_U &\Psi(\Phi(|T^{-1}Ux|^{2}))\le \Psi\left(\frac{1}{|G|}\sum_U \Phi(|T^{-1}Ux|^{2})\right)\le \Psi\left(\frac{1}{|G|} \sum_U \langle V^\dagger \Phi(S^2) V,P_{Ux}\rangle_{HS}\right)
    \\&=\Psi\left(\frac{1}{d}\|V^\dagger \Phi(S^2)V\|_{1}\right)
    \le \Psi\left(\frac{1}{d}\|\Phi(T^{-\dagger}T^{-1})\|_{1}\right)=\Psi\left(\frac{1}{d}\tr(\Phi(T^{-\dagger}T^{-1}))\right)
  \end{align*}
  using \eqref{matrixframe} with $p=1$. We also get the equality statement for the upper bound if and only if $T$ is a multiple of an orthogonal matrix, or both $\Psi$ and $\Phi$ are linear. Note that we need $\Phi\ge0$, so that the Schatten $1$-norm equals the trace of a matrix. However, as above, we can also handle the case $\Phi+l$ by applying tight frame identity separately to $\Phi$ and $l$. The latter gives equalities in the above calculation.

Note that for general convex $\Phi$ there exists linear function $l$ (any supporting line or affine minorant), such that $\Phi\ge l$. Then $\Phi_1=\Phi-l$ is nonnegative and convex. Now repeat the proof with $\Phi=\Phi_1+l$.  Hence the theorem holds for arbitrary convex functions $\Phi$.

  Finally, all inequalities in the above proof come from convexity of $\Phi$ and concavity of $\Psi$. We can replace $\Phi$ and $\Psi$ with $-\Phi$ and $-\Psi$ to get the opposite inequalities, proving the lower bound in the theorem.
\end{proof}

\begin{remark}
In the above proof we used convexity/concavity to turn $\Phi$-frame type expression into a $1$-frame. We needed to do this to obtain valid $1$-frame identity for $G$. However, straightforward modification of the above proof leads to non-tight frame identities based on $p$-frames whenever these are admissible for a given symmetry group $G$.
\end{remark}

\autoref{nontight} provides a framework for handling almost arbitrary multipliers. Any function possessing left and right derivatives, such that those derivatives are of bounded variation on any closed bounded subinterval of $(0,\infty)$, can be represented using a difference of two convex functions (class of DC functions, see \cite{H59} or \cite{BB11}). Hence any function $f=\Phi_1-\Phi_2$, where $\Phi_i$ are convex, can be handled by \autoref{nontight}. In particular any $f\in C^{1,1}$ can be written as a difference of nonnegative convex functions.

\begin{proof}[Proof of \autoref{thmgeneralmult}]
  Start with \eqref{averagesum} with $f=\Phi_1-\Phi_2$
\begin{align}
  \lambda_1+\dots+\lambda_n\Big|_{T(\Omega)}&\le
  \frac{1}{|G|}\sum_{U\in G}\sum_{i=1}^n\int_{\Rd}(\Phi_1-\Phi_2)(|T^{-\dagger}U^\dagger \xi|^2)|\widehat u_i|^2d\xi
\end{align}
Now apply \autoref{nontight}, first with $\Psi(t)=t$ and $\Phi=\Phi_1$, then with $\Psi=-\Phi_2$ and $\Phi(t)=t$. On the right we get a sum of quadratic forms for the multiplier from \autoref{thmgeneralmult}. Choose $u_i$ to be the eigenfunctions for that multiplier to get the result. Similarly we get the second part of the theorem.
\end{proof}

\section{Higher moments of mass}\label{secmoments}
As the first application of generalized $p$-frames, we find a relation between higher order moments mass of a highly symmetric domain $\Omega$ and its linearly transformed image. This section generalizes \cite[Lemma 9]{LS11}, where the second moment (the moment of inertia) is treated using classical tight frames.

In \autoref{mainclamped} we defined the $2p$-moment of mass as 
\begin{align*}
  I_{2p}(\Omega)=\int_\Omega |x|^{2p} dx.
\end{align*}
Note that rescaling the domain with a fixed scaling factor $c$, scales $I_{2p}$ by $c^{2p+d}$. In particular volume ($p=0$) scales like $c^d$. Hence $V^{1+2p/d}/I_{2p}$ is a scale invariant quantity. Moreover

\begin{lemma}\label{lemmoments}
  If $\Omega$ admits $p$-frames, then
  \begin{align*}
    F_p(s^2(T)) = \frac{I_{2p}(T(\Omega))}{I_{2p}(\Omega)} \frac{V(\Omega)^{1+4p/d}}{V(T^{-1}(\Omega))^{2p/d}V(T(\Omega))^{1+2p/d}}
  \end{align*}
  Furthermore, for any $\Omega$ with irreducible isometry group
  \begin{align*}
    \frac{1}{d^p} \|T\|_{HS}^{2p}\le\frac{I_{2p}(T(\Omega))}{I_{2p}(\Omega)} \frac{V(\Omega)^{1+4p/d}}{V(T^{-1}(\Omega))^{2p/d}V(T(\Omega))^{1+2p/d}} \le \frac{1}{d}\|T\|_{2p}^{2p}.
  \end{align*}
\end{lemma}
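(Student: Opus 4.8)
The plan is to reduce the whole statement to a single change of variables followed by the symmetrization argument behind \autoref{corframes}. First I would write, by the substitution $x=Ty$ (with $|\det T|$ coming from the Jacobian),
$I_{2p}(T(\Omega))=\int_{T(\Omega)}|x|^{2p}\,dx=|\det T|\int_\Omega|Ty|^{2p}\,dy.$
The key observation is that the integrand $|Ty|^{2p}$ may be averaged over the isometry group $G$ of $\Omega$: since every $U\in G$ is orthogonal, it has Jacobian of modulus $1$ and satisfies $U(\Omega)=\Omega$ (up to a null set), so the substitution $y\mapsto Uy$ gives $\int_\Omega|Ty|^{2p}\,dy=\int_\Omega|TUy|^{2p}\,dy$ for every $U$, hence
$\int_\Omega|Ty|^{2p}\,dy=\int_\Omega\Bigl(\tfrac{1}{|G|}\sum_{U\in G}|TUy|^{2p}\Bigr)\,dy.$

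For the identity with $F_p$: if $\Omega$ admits $p$-frames, i.e.\ $G$ has a unique invariant polynomial of degree $2p$, then \autoref{corframes} applies to the (square, invertible) matrix $T$ and the inner average equals $F_p(s^2(T))\,|y|^{2p}$ pointwise; integrating over $\Omega$ yields $I_{2p}(T(\Omega))=|\det T|\,F_p(s^2(T))\,I_{2p}(\Omega)$. It then remains only to recast $|\det T|^{-1}$ in the scale-invariant form claimed in the lemma. Using $V(T(\Omega))=|\det T|\,V(\Omega)$ and $V(T^{-1}(\Omega))=|\det T|^{-1}V(\Omega)$, one verifies by a direct count of exponents that
$\dfrac{V(\Omega)^{1+4p/d}}{V(T^{-1}(\Omega))^{2p/d}\,V(T(\Omega))^{1+2p/d}}=\dfrac{1}{|\det T|},$
so that solving for $F_p(s^2(T))$ in the displayed identity produces exactly the asserted formula. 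This is a short computation whose only pitfall is making sure the powers of $|\det T|$ cancel, which they do precisely because the right-hand side is engineered to be invariant under $T\mapsto cT$ and $\Omega\mapsto c\Omega$.

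For the two-sided bound I would not use the tight $p$-frame identity (which may be unavailable) but instead \autoref{cornontight} — equivalently \autoref{nontight} with $\Psi(t)=t$ and the convex choice $\Phi(t)=t^{p}$ applied to both $T$ and the lower estimate — which holds for \emph{every} irreducible $G$: $\tfrac{1}{d^{p}}\|T\|_2^{2p}\,|y|^{2p}\le\tfrac{1}{|G|}\sum_{U}|TUy|^{2p}\le\tfrac{1}{d}\|T\|_{2p}^{2p}\,|y|^{2p}$. Integrating this over $\Omega$, dividing by $I_{2p}(\Omega)$, and applying the same volume bookkeeping as above (together with $\|T\|_{HS}=\|T\|_2$) gives the claimed chain of inequalities, with the middle term being exactly $\int_\Omega|Ty|^{2p}\,dy\big/I_{2p}(\Omega)$ rewritten in scale-invariant form. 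I do not anticipate a genuine obstacle here: the mathematical content is the symmetrization step plus \autoref{corframes}/\autoref{cornontight}, and the remainder is elementary; the two points that need attention are the exponent bookkeeping in the volume normalization and the observation that the inequality half must invoke the non-tight bound so as to cover all irreducible isometry groups rather than only those admitting tight $p$-frames.
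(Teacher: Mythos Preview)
Your proposal is correct and follows essentially the same approach as the paper: a change of variables $x=Ty$, symmetrization over the isometry group $G$ of $\Omega$, then \autoref{corframes} for the identity and the pointwise bounds from \autoref{nontight} for the two-sided inequality, followed by the volume bookkeeping $|\det T|^{-1}=V(\Omega)^{1+4p/d}/\bigl(V(T^{-1}(\Omega))^{2p/d}V(T(\Omega))^{1+2p/d}\bigr)$. The only minor remark is that your citation of \autoref{cornontight} is slightly off target, since that corollary bounds $F_p$ itself rather than the group average for a general irreducible $G$; but you immediately correct course by invoking \autoref{nontight} directly (with $\Psi(t)=t$, $\Phi(t)=t^p$ for the upper bound and $\Phi(t)=t^p$, $\Psi(t)=t$ in the second inequality for the lower bound), which is exactly what the paper does.
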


\begin{proof}
Suppose $\Omega$ admits tight $p$-frames. Let $T$ be an arbitrary linear transformation. Then
\begin{align*}
  I_{2p}(T(\Omega))=\int_{T(\Omega)} |x|^{2p} dx = |T| \int_{\Omega} |T x|^{2p}  dx
\end{align*}
But $\Omega$ is invariant under its isometry group $G$, hence
\begin{align*}
  I_{2p}(T(\Omega)) = |T| \frac{1}{|G|}\sum_U \int_{\Omega} |T Ux|^{2p}  dx=|T| F_p(s^2(T)) I_{2p}(\Omega),
\end{align*}
by \autoref{corframes}. Similarly, if the group of symmetries of $\Omega$ is just irreducible, \autoref{nontight} implies
\begin{align*}
  \frac{1}{d^p} \|T\|_{HS}^{2p}\le\frac{I_{2p}(T(\Omega))}{I_{2p}(\Omega)} |T|^{-1} \le \frac{1}{d}\|T\|_{2p}^{2p}.
\end{align*}

As in the proof of \cite[Lemma 9]{LS11} we finish the proof by noting that
\begin{align*}
  |T|=\frac{V(T^{-1}(\Omega))^{2p/d}V(T(\Omega))^{1+2p/d}}{V(\Omega)^{1+4p/d}}.
\end{align*}
\end{proof}

Using Schatten norms based formula for $F_p$ we get
\begin{lemma}
  In two dimensions
  \begin{align*}
    F_p(s^2(T^{-1}))|T|^{2p}=F_p(s^2(T))
  \end{align*}
\end{lemma}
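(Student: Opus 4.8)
The plan is to reduce the statement to an elementary symmetry of homogeneous symmetric polynomials in two variables. By \autoref{theoremframes}, $F_p$ is a homogeneous symmetric polynomial of degree $p$ in the multiset of squared singular values. In dimension two this multiset has exactly two elements; denote the squared singular values of $T$ by $\sigma_1,\sigma_2$, so that $s^2(T)=\{\sigma_1,\sigma_2\}$.

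First I would record the two linear-algebra inputs that put the two sides on the same footing. The singular values of $T^{-1}$ are the reciprocals of those of $T$, so $s^2(T^{-1})=\{\sigma_1^{-1},\sigma_2^{-1}\}$; and $|T|^{2}=|\det T|^{2}=\sigma_1\sigma_2$, hence $|T|^{2p}=(\sigma_1\sigma_2)^p$. With these substitutions the desired identity becomes
\begin{align*}
  (\sigma_1\sigma_2)^p\,F_p(\sigma_1^{-1},\sigma_2^{-1})=F_p(\sigma_1,\sigma_2).
\end{align*}

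Next I would prove this for an arbitrary homogeneous symmetric polynomial $P$ of degree $p$ in two variables (so in particular for $F_p$). Such a $P$ is a linear combination of the symmetric monomials $\sigma_1^{k}\sigma_2^{p-k}+\sigma_1^{p-k}\sigma_2^{k}$ for $0\le k<p/2$, together with $\sigma_1^{p/2}\sigma_2^{p/2}$ when $p$ is even. For each such generator a one-line computation, e.g.
\begin{align*}
  (\sigma_1\sigma_2)^p\bigl(\sigma_1^{-k}\sigma_2^{-(p-k)}+\sigma_1^{-(p-k)}\sigma_2^{-k}\bigr)=\sigma_1^{p-k}\sigma_2^{k}+\sigma_1^{k}\sigma_2^{p-k},
\end{align*}
shows that the generator is fixed by the operator $P\mapsto(\sigma_1\sigma_2)^p P(\sigma_1^{-1},\sigma_2^{-1})$; the remaining case $\sigma_1^{p/2}\sigma_2^{p/2}$ is immediate. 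By linearity the identity holds for $P=F_p$, which finishes the proof. Alternatively, one may simply invoke the explicit planar expansion $F_p(s^2(T))=4^{-p}\sum_{k=0}^p\binom{2k}{k}\binom{2p-2k}{p-k}\sigma_1^k\sigma_2^{p-k}$ recorded in \autoref{secfp} and reindex the sum by $k\mapsto p-k$, using the symmetry $\binom{2k}{k}\binom{2p-2k}{p-k}=\binom{2(p-k)}{p-k}\binom{2k}{k}$.

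No genuine difficulty arises here. The only points needing care are the bookkeeping between singular values and squared singular values, and the appeal to \autoref{theoremframes} for the fact that $F_p$ really is homogeneous of degree $p$ and symmetric in $s^2(T)$ — this is precisely what confines the argument to the two-variable setting and makes the reciprocal symmetry available.
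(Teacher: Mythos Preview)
Your argument is correct. Both proofs rest on the same elementary two-variable fact---that $(\sigma_1\sigma_2)^p P(\sigma_1^{-1},\sigma_2^{-1})=P(\sigma_1,\sigma_2)$ for any homogeneous symmetric polynomial $P$ of degree $p$---but they verify it on different bases. The paper checks it on power sums via the Schatten-norm identity $\|T^{-1}\|_k^k=\|T\|_k^k/|T|^k$ (valid in dimension two) and then invokes the cycle-index expansion of $F_p$ from \autoref{thmcycle}. You instead check it on the monomial symmetric polynomials and appeal only to \autoref{theoremframes} for homogeneity and symmetry of $F_p$, which is slightly more self-contained and in fact proves the identity for \emph{every} degree-$p$ homogeneous symmetric polynomial in two variables, not just $F_p$. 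Your alternative via the explicit planar expansion and the reindexing $k\mapsto p-k$ is also fine and matches the spirit of the paper's Schatten-norm computation.
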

\begin{proof}
  Note that
  \begin{align*}
    \|T^{-1}\|_{k}^{k}=s_1^{-k}+s_2^{-k}=\frac{s_1^k+s_2^k}{(s_1s_2)^k}=\frac{\|T\|_k^k}{|T|^k}.
  \end{align*}
  Combine this Schatten norm property and \autoref{thmcycle} to get the result.
\end{proof}
This allows us to compare the higher moments of $T(\Omega)$ and $T^{-1}(\Omega)$ in dimension two.
\begin{lemma}\label{momentstwo}
  Let $A$ be the area of a two dimensional domain $\Omega$. Then
  \begin{align*}
    \left.\frac{A^{1+p}}{I_{2p}}\right|_{T(\Omega)}=
    \left.\frac{A^{1+p}}{I_{2p}}\right|_{T^{-1}(\Omega)}
  \end{align*}
\end{lemma}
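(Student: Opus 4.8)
The plan is to reduce both sides to a common expression built from the determinant $|T|=|\det T|$, the area $A(\Omega)$, the moment $I_{2p}(\Omega)$, and the frame polynomial $F_p$. As in the two preceding lemmas, $\Omega$ is assumed to admit $p$-frames; this hypothesis is genuinely used in the argument below, and some symmetry is indispensable (for a domain with trivial isometry group the asserted identity already fails --- e.g.\ for a scalene triangle with $p=1$).

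First I would recall, from the proof of \autoref{lemmoments} (itself an application of \autoref{corframes}), the exact identity
\begin{align*}
  I_{2p}(T(\Omega))=|T|\,F_p(s^2(T))\,I_{2p}(\Omega),
\end{align*}
valid for every invertible linear map $T$, together with the scaling law $A(T(\Omega))=|T|\,A(\Omega)$. Combining these,
\begin{align*}
  \left.\frac{A^{1+p}}{I_{2p}}\right|_{T(\Omega)}
  =\frac{A(T(\Omega))^{1+p}}{I_{2p}(T(\Omega))}
  =\frac{|T|^{1+p}\,A(\Omega)^{1+p}}{|T|\,F_p(s^2(T))\,I_{2p}(\Omega)}
  =\frac{|T|^{p}\,A(\Omega)^{1+p}}{F_p(s^2(T))\,I_{2p}(\Omega)}.
\end{align*}
Running the identical computation for the invertible map $T^{-1}$, for which $|T^{-1}|=|T|^{-1}$, yields
\begin{align*}
  \left.\frac{A^{1+p}}{I_{2p}}\right|_{T^{-1}(\Omega)}
  =\frac{|T|^{-p}\,A(\Omega)^{1+p}}{F_p(s^2(T^{-1}))\,I_{2p}(\Omega)}.
\end{align*}

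The final ingredient is the lemma immediately preceding this one, which in dimension two states $F_p(s^2(T^{-1}))\,|T|^{2p}=F_p(s^2(T))$, i.e.\ $F_p(s^2(T^{-1}))=|T|^{-2p}\,F_p(s^2(T))$. Substituting this into the last display turns the factor $|T|^{-p}$ in the numerator, together with the $|T|^{-2p}$ it exposes in the denominator, into a single factor $|T|^{p}$; this reproduces verbatim the expression already obtained for $T(\Omega)$, so the two quantities coincide, as claimed.

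I do not anticipate a real obstacle: the whole argument is a cancellation of powers of $|T|$, resting only on two facts already established --- the volume--moment identity packaged in \autoref{lemmoments} and the two-dimensional reciprocity for $F_p$. The one point worth flagging is that the statement is genuinely two-dimensional: that reciprocity follows from the dimension-two Schatten-norm relation $\|T^{-1}\|_{k}^{k}=\|T\|_{k}^{k}/|T|^{k}$ (hence from \autoref{thmcycle}), and there is no analogous identity once $d\ge 3$.
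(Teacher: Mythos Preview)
Your proof is correct and follows essentially the same route as the paper: both arguments combine the moment identity $I_{2p}(T(\Omega))=|T|\,F_p(s^2(T))\,I_{2p}(\Omega)$ from \autoref{lemmoments}, the area scaling $A(T(\Omega))=|T|\,A(\Omega)$, and the two-dimensional reciprocity $F_p(s^2(T^{-1}))\,|T|^{2p}=F_p(s^2(T))$ from the preceding lemma. The only cosmetic difference is that the paper first derives $I_{2p}(T(\Omega))=|T|^{2+2p}I_{2p}(T^{-1}(\Omega))$ in one chain and then divides by $|T|^{1+p}$, whereas you compute each side of the claimed identity separately and match them; the underlying ingredients and cancellations are identical.
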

\begin{proof}
  \begin{align*}
    I_{2p}(T(\Omega))=|T|F_p(s^2(T))I_{2p}(\Omega)=|T|^{2+2p}|T^{-1}|F_p(s^2(T^{-1}))I_{2p}(\Omega)=|T|^{2+2p}I_{2p}(T^{-1}(\Omega))
  \end{align*}
  Now divide by $|T|^{1+p}$ and note that $|T|^{-1}=|T^{-1}|$.
\end{proof}
\section{Bi-Laplacian and convex multipliers}\label{secbilap}

\subsection{Plate problem}
In this section we apply the theory of $p$-matrix frames to the Fourier multipliers of the form $\Phi(|\xi|^2)$ with convex $\Phi$. Consider a quadratic form
\begin{align*}
  Q_A(u,v)=\int_{\Rd} \Phi(|\xi|^2) \widehat u(\xi)\overline{\widehat v(\xi)}\,d\xi. 
\end{align*}
We get a weak formulation of the underlying operator $A$, with discrete spectrum for any domain $\Omega$ (due to Dirichlet boundary condition). As described in \autoref{qforms}, the smallest eigenvalue can be obtained by minimizing Rayleigh quotient, and the sums via finite subspace minimization. Recall \eqref{averagesum}, but with convex multiplier $\Phi$

\begin{align}
  \lambda_1+\dots+\lambda_n\Big|_{T(\Omega)}&\le
  \frac{1}{|G|}\sum_{U\in G}\sum_{i=1}^n\int_{\Rd}\Phi(|T^{-\dagger}U^\dagger \xi|^2)|\widehat u_i|^2d\xi\nonumber
  \\&=
  \sum_{i=1}^n\int_{\Rd}\left(\frac{1}{|G|}\sum_{U\in G}\Phi(|T^{-\dagger}U^\dagger \xi|^2)\right)|\widehat u_i|^2d\xi\label{plateaverage}
\end{align}

Bi-Laplacian is perhaps the most import example of such multipliers. In \autoref{mainclamped} we described the differential equation leading to a vibrating plate problem. In terms of Fourier multipliers and quadratic forms we have
\begin{align*}
  Q_A(u,u)=\int_{\Rd} (|\xi|^4+\tau |\xi|^2) |\widehat u|^2\,d\xi.
\end{align*}
Hence bi-Laplacian can be weakly defined on $H^2_0(\Omega)$. Since this is a subspace of $H^1_0(\Omega)$, we clearly get compact embedding into $L^2$, and discrete spectrum.

The function $\Phi(t)=t^2+\tau t$ is convex, even for negative values of $\tau$. Hence we can apply \autoref{thmgeneralmult} with $\Phi_1=\Phi$ and $\Phi_2=0$. This leads to
\begin{align*}
  \lambda_1+\dots+\lambda_n\Big|_{|\xi|^4+\tau |\xi|^2,T(\Omega)}\le \lambda_1+\dots+\lambda_n\Big|_{(|\xi|^4\|T\|_4^4+\tau|\xi|^2\|T\|^{2})/d,\Omega}
\end{align*}
Due to almost homogeneous nature of the multiplier, we can take $\|T\|_4^4/d$ out of the quadratic form. Furthermore we can rescale $\tau$ so that we have no dependence on $T$ on $\Omega$ and we obtain \autoref{plateweaker} from \autoref{thmplate}. Note that the multiplier is not homogeneous, hence we must have some dependence of the multipliers in \autoref{plateweaker} on transformation $T$. We chose to keep the operator on the symmetric domain $\Omega$ as simple as possible.

Now we can use tight $2$-frames to improve the result for domains $\Omega$ that admit tight $2$-frames. We skip \autoref{thmgeneralmult} in favor of a direct approach based on \eqref{plateaverage}. We have
\begin{align*}
  \lambda_1+\dots+\lambda_n\Big|_{T(\Omega)}&\le
  \sum_{i=1}^n\int_{\Rd}\left(\frac{1}{|G|}\sum_{U\in G} (|T^{-\dagger}U^\dagger \xi|^4+\tau|T^{-\dagger}U^\dagger \xi|^2 )\right)|\widehat u_i|^2d\xi
  \\&=
  \sum_{i=1}^n\int_{\Rd}\left( F_2(s^2(T^{-1})) |\xi|^4+\tau \frac{\|T^{-1}\|_2^2}{d}|\xi|^2 \right)|\widehat u_i|^2d\xi
\end{align*}
using \autoref{corframes}. Define $D(T^{-1})=d\,F_2(s^2(T^{-1}))$ (or use \autoref{schatten4} directly) to get \autoref{platestronger} from \autoref{thmplate}. Note that we need to choose $u_i$ to be the eigenfunctions for the multiplier we have on the right of the inequality.

Finally, $D(T^{-1})\le \|T^{-1}\|_4^4/d$ is a special case of \autoref{cornontight}.

\subsection{Higher order Laplacians}\label{secpolylap}
When $f(t)$ is a polynomial, then the operator $f(-\Delta)$ is a higher order differential operator. We can estimate its eigenvalues using \autoref{thmgeneralmult} and, if needed, the convexification procedure described just under that theorem. We can however improve the bounds when the symmetry group of $\Omega$ admits tight $2$-frames. Assuming $f''(t)\ge -c$, we define
\begin{align*}
  \Phi_1(t)=f(t)+ct^2,\\
  \Phi_2(t)=ct^2.
\end{align*}
We handle $\Phi_1$ as in \autoref{thmgeneralmult}. However, we use tight $2$-frame identity on $-\Phi_2$ (as in the plate problem). Note that this is possible even for negative terms, since $2$-frames provide exact Rayleigh quotient evaluation for the multiplier $|\xi|^4$. As a consequence we obtain a stronger version of \autoref{thmgeneralmult}
  \begin{align*}
    \lambda_1+\dots+\lambda_n \Big|_{f,T(\Omega)}\le \lambda_1+\dots+\lambda_n\Big|_{G[\Phi_1,T],\Omega}.
  \end{align*}
with
  \begin{align}
    G[\Phi_1,T](t) = \frac{1}{d}\tr \Phi_1(t T^{-1}T^{-\dagger})-c F_2(s^2(T^{-1}))t^2.
\end{align}

  We can also completely forgo the use of \autoref{thmgeneralmult} if $\Omega$ admits tight $p$-frames for $2p$ larger or equal to the degree of the polynomial $f$. Then each monomial in $f$ can be exactly averaged using appropriate higher order tight frame, leading to further improvements. In particular, poly-Laplacian $\Delta^p$ with $p>1$ corresponds to $f(t)=t^p$. If $\Omega$ admits tight $p$-frames (in particular for disks), then the eigenvalues of the linearly transformed domain (e.g. ellipse) can be estimated by $F_p(s^2(T))$ times the eigenvalues on $\Omega$. Furthermore, $F_p(s^2(T))$ can be rewritten as the $2p$-moment of mass using \autoref{lemmoments}, leading to an  analog of \autoref{plateiso} with $I_4$ replaced by $I_{2p}$, and properly adjusted powers of the volume. 

\subsection{Buckling problem}\label{secbuckling}

Finally, we tackle a related plate buckling problem. We want to bound the principal eigenvalue $\Lambda$ in 
\begin{align*}
  \Delta^2 u +\Lambda \Delta u = 0\text{ in }\Omega,\\
  u=\frac{\partial u}{\partial \nu} = 0,\text{ on }\partial\Omega.
\end{align*}
The eigenvalue $\Lambda$ corresponds to the critical compression level that forces a plate to buckle. See e.g. Bramble and Payne \cite{BP63}, Ashbaugh and Laugesen \cite{AL96}, or Henrot \cite{He06} for known results and history of the problem.

The approach described above cannot be directly used to find a buckling eigenvalue estimate, due to the more complicated form of the Rayleigh quotient. The lowest buckling eigenvalue equals
\begin{align*}
  \Lambda(\Omega)=\inf_{u\in H_0^2(\Omega)} \frac{\int_{\Rd} |\xi|^4 |\widehat u|^2 d\xi}{\int_{\Rd} |\xi|^2 |\widehat u|^2 d\xi}.
\end{align*}
Therefore the numerator is the same as for bi-Laplace eigenvalues, while the denominator is the quadratic form of the Laplacian.
In order to apply our method we assume that $u$ is the eigenfunction for $\Lambda(\Omega)$ and we rewrite a variational upper bound on $T(\Omega)$ as
\begin{align*}
  \Lambda(T(\Omega))\int_{\Rd}|T^{-\dagger}U^\dagger \xi|^2|\widehat u|^2d\xi \le \int_{\Rd}|T^{-\dagger}U^\dagger \xi|^4|\widehat u|^2d\xi.
\end{align*}
We average this inequality (each side) over the group $G$, as before. 
\begin{align}\label{buckineq}
  \Lambda(T(\Omega))\int_{\Rd}\left(\frac{1}{G}\sum_{U\in G}|T^{-\dagger}U^\dagger \xi|^2\right)|\widehat u|^2d\xi \le \int_{\Rd}\left(\frac{1}{G}\sum_{U\in G}|T^{-\dagger}U^\dagger \xi|^4\right)|\widehat u|^2d\xi.
\end{align}
On the left, we can use the tight frame property for $G$. On the right, we either use $2$-frame identity if the group $G$ allows for it, or a bound from \autoref{nontight}. In the $2$-frame case we can also use \autoref{lemmoments} to express the bound using moments of mass. We get:
\begin{theorem}\label{thmbuckling}
  Suppose isomtery group $G$ of the domain $\Omega$ admits $2$-frames. For any linear transformation $T$
  \begin{align*}
    \Lambda\Big|_{T(\Omega)}\le \frac{\|T^{-1}\|_2^4+2\|T^{-1}\|_4^4}{(d+2)\|T^{-1}\|_2^2}\Lambda\Big|_\Omega.
  \end{align*}
  Equivalently
  \begin{align*}
    \Lambda V^{2/d}\Big|_{T(\Omega)}\cdot\left.\frac{V^{2/d} I_2}{I_4}\right|_{T^{-1}(\Omega)}
  \end{align*}
  is maximal when $T$ is a multiple of an orthogonal matrix. 
\end{theorem}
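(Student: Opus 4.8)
The plan is to execute the variational argument already begun at \eqref{buckineq} and then feed in the two frame identities. First I would fix $u$ to be an eigenfunction realizing $\Lambda(\Omega)$; such a minimizer exists by the standard variational theory for the buckling problem (compact embedding of $H^2_0(\Omega)$ into $L^2$, as discussed in \autoref{qforms} and \autoref{secbuckling}), so that $\int_{\Rd}|\xi|^4|\widehat u|^2\,d\xi=\Lambda(\Omega)\int_{\Rd}|\xi|^2|\widehat u|^2\,d\xi$. For every $U\in G$ the function $u\circ U\circ T^{-1}$ lies in $H^2_0(T(\Omega))$, so testing the buckling Rayleigh quotient on $T(\Omega)$ against it and clearing the denominator gives, via the Fourier-domain computation behind \eqref{QT},
\begin{align*}
  \Lambda(T(\Omega))\int_{\Rd}|T^{-\dagger}U^\dagger\xi|^2|\widehat u|^2\,d\xi\le\int_{\Rd}|T^{-\dagger}U^\dagger\xi|^4|\widehat u|^2\,d\xi.
\end{align*}
The reason to work with this cleared form rather than the Rayleigh quotient itself is that the buckling functional is a ratio, not additive, so one cannot average the quotient directly; instead one averages each side separately over $G$ to arrive at \eqref{buckineq}.

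Next I would evaluate the two averaged symbols. On the left, the classical tight $1$-frame identity \eqref{Tframeid} (equivalently \autoref{corframes} with $p=1$), which holds for any irreducible $G$, gives $\frac1{|G|}\sum_U|T^{-\dagger}U^\dagger\xi|^2=\frac1d\|T^{-1}\|_2^2|\xi|^2$, using $\|T^{-\dagger}\|_2=\|T^{-1}\|_2$. On the right, since $G$ admits $2$-frames, the generalized tight $2$-frame identity \eqref{schatten4} applied to $T^{-1}$ gives $\frac1{|G|}\sum_U|T^{-\dagger}U^\dagger\xi|^4=\frac{\|T^{-1}\|_2^4+2\|T^{-1}\|_4^4}{d(d+2)}|\xi|^4$. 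Inserting both into \eqref{buckineq}, cancelling $\int|\xi|^2|\widehat u|^2\,d\xi$ after dividing by $\frac1d\|T^{-1}\|_2^2$, and using the eigenfunction relation for $u$ yields the first displayed inequality of the theorem. Equality at $T=c\,Q$ with $Q$ orthogonal is then a direct check: the constant collapses to $c^{-2}$, and $\Lambda(cQ(\Omega))=c^{-2}\Lambda(\Omega)$ because the buckling functional is isometry invariant and scales by $c^{-2}$ under dilation by $c$.

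For the equivalent geometric restatement I would first observe that the constant equals $F_2(s^2(T^{-1}))/F_1(s^2(T^{-1}))$, since $F_1(s^2(T^{-1}))=\frac1d\|T^{-1}\|_2^2$ and $F_2(s^2(T^{-1}))=\frac{\|T^{-1}\|_2^4+2\|T^{-1}\|_4^4}{d(d+2)}$ by \autoref{corframes} and \eqref{schatten4}. Then I apply \autoref{lemmoments} (valid for $p=1$ and $p=2$ since $\Omega$ admits $2$-frames) with $T$ replaced by $T^{-1}$, expressing $F_1(s^2(T^{-1}))$ and $F_2(s^2(T^{-1}))$ through $I_2$, $I_4$ and the volumes of $T^{-1}(\Omega)$, $T(\Omega)$ and $\Omega$. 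Taking the quotient, the volume powers combine so that after multiplying the first inequality through by $V(T(\Omega))^{2/d}\cdot\frac{V(T^{-1}(\Omega))^{2/d}I_2(T^{-1}(\Omega))}{I_4(T^{-1}(\Omega))}$ the right-hand side becomes the $T$-independent quantity $\frac{V(\Omega)^{4/d}I_2(\Omega)}{I_4(\Omega)}\Lambda(\Omega)$, which is exactly the value of the left-hand side at $T=\Id$; the equality case $T=c\,Q$ carries over. I expect the only genuinely fiddly part to be this last exponent bookkeeping for the three volumes, together with making sure $u$ is a true minimizer so that the relation $\int|\xi|^4|\widehat u|^2=\Lambda(\Omega)\int|\xi|^2|\widehat u|^2$ is available with the correct direction; everything else is direct substitution of results already established in the paper.
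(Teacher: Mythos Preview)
Your proposal is correct and follows essentially the same route as the paper: fix the buckling eigenfunction $u$ on $\Omega$, test the Rayleigh quotient on $T(\Omega)$ with $u\circ U\circ T^{-1}$, clear the denominator, average over $G$ to reach \eqref{buckineq}, then apply the tight $1$-frame identity on the left and the $2$-frame identity \eqref{schatten4} on the right, and finally use the eigenfunction relation and \autoref{lemmoments} for the geometric restatement. The only elaboration you add beyond the paper's sketch is the explicit equality-case check and the remark on why one clears the denominator before averaging, both of which are accurate.
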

Note that similar results can be obtained for any Rayleigh quotient involving two quadratic forms with convex or concave multipliers. In case group $G$ does not allow for appropriate higher order frame, we can use the lower bound from \autoref{nontight} to simplify the left side of \eqref{buckineq}.

\subsection{Numerical comparisons}\label{platenumerical}
\subsubsection{Plate problem}
We wish to compare our upper bound for the lowest plate eigenvalue without tension ($\tau=0$) in dimension $2$ with other known results. The exact eigenvalue for a disk is known, hence we can easily get upper bounds for any ellipse. The disk eigenvalue can be written in terms of the lowest zero of a certain combination of Bessel functions, see e.g. \cite{AL96}. Numerically
\begin{align*}
  \lambda_1(D) \approx \frac{104.36}{r^4},
\end{align*}
for a disk with radius $r$. Therefore Nadirashvili isoperimetric inequality \cite{N95} applied to an ellipse with semiaxes $a$ and $b$ gives
\begin{align*}
  \lambda_1|_{E(a,b)}\ge \frac{104.36}{a^2b^2}.
\end{align*}
Take a linear transformation with singular values $a$ and $b$ to transform a unit disk into an ellipse with semiaxes $a$ and $b$ (ratio $r=a/b$). Then inequality \eqref{platestronger} gives 
\begin{align}\label{ellipseupper}
  \Gamma_1|_{E(a,b)}\le \frac{104.36}{a^2b^2} \frac{3r^2+2+3r^{-2}}{8}\;\left(\le \frac{104.36}{a^2b^2} \frac{r^2+r^{-2}}{2}\right).
\end{align}
The inequality in the parentheses is the weaker upper bound \eqref{plateweaker} (obtained from classical tight frames instead of $2$-frames).

A very comprehensive report by Leissa \cite{L69} contains an extensive section devoted ellipses. Results from various sources, including Shibaoka \cite{S57} and McNitt \cite{McN62}, are compared. Upper bounds (3.8) and (3.6) on page 38 of Leissa \cite{L69} are especially relevant. The bounds are essentially variational with a trial functions given by (3.7) and (3.5). The bounds have the same dependence on semiaxes $a$ and $b$, but differ by a multiplicative constant. This fact should not be a surprise, since both test functions are compositions of linear functions and test functions for disks. As we showed, the Rayleigh quotient for such functions splits into a constant depending on the transformation, and the Rayleigh quotient of the original test function on the disk. Therefore, one might immediately choose the exact eigenfunction for the disk and get the best possible result of this type. In fact Table 3.2 in this report lists upper bounds that seem to be coming from taking the eigenfunction for the disk composed with a linear transformation (instead of using \cite[(3.7)]{L69}), exactly recovering our $2$-frame based result \eqref{platestronger} for the first eigenvalue, without tension.

It is clear, that one can avoid tedious trial function calculations for ellipses due to enough symmetry of the disk. In fact, without knowing the exact value for the disk, we would still get the same semiaxes dependence, and only need to somehow estimate the eigenvalue on the disk, separately. Or simply use the best known upper bound for the disk. It is also worth noting that classical tight $1$-frames are not strong enough to recover the results from Leissa's report. It is crucial to work with $2$-frames, since we get exact values of the Rayleigh quotients.

In \autoref{plateellipse} we compare our bounds with bounds due to McLaurin \cite{McL68}. Note that $2$-frame bound is quite close to his nearly exact values. However, McLaurin is using a collocation method, involving case by case optimization of linear combinations of certain exact solutions. Clearly, this cannot be achieved for arbitrary multipliers.
\begin{table}
  \centering
  \begin{tabular}{lcccc}
  $a/b$                             & $1.1$     & $1.2$     & $2$      & $4$\\
  \hline
  $1$-frames, \eqref{plateweaker}   & $106.262$ & $111.375$ & $221.765$ & $838.1$\\
  $2$-frames, \eqref{platestronger} & $105.786$ & $109.621$ & $192.414$ & $654.7$ \\
  McLaurin (upper)       $\quad$    & $105.741$ & $109.440$ & $187.382$ & $603.2$ \\
  McLaurin (lower)       $\quad$    & $105.741$ & $109.440$ & $187.380$ & $587.2$ \\
  \end{tabular}
  \caption{Fundamental frequency for the tension-less plate problem on ellipse with varying semiaxes ratios $a/b$. Note that Nadirashvili's isoperimetric inequality gives a lower bound $104.36/a^2b^2$ regardless of the ratio $a/b$. All results stated for $ab=1$.}
  \label{plateellipse}
\end{table}

Unfortunately, the stronger $2$-frame result \eqref{platestronger} does not apply to rectangles, since the group of isometries of the square does not admit $2$-frames. This is a fundamental limitation, not related to our method, as we now show. The exact eigenvalue of the square is not known, but Wieners \cite{W96} proved a validated numerical bound $1294.93396\pm0.00002$, while McLaurin \cite{McL68} obtained slightly worse bound using collocation, namely $1294.94\pm0.14$. Taking $1295$ as the eigenvalue of the square, if we falsely applied \eqref{platestronger} to the rectangle with sides $1$ and $2$ we would get the upper bound of around $597$ for that rectangle. Kuttler and Sigillito \cite{KS81} give a lower bound for the same rectangle that equals $603.8$. The same contradiction happens for $4:1$ rectangle, also studied by Kuttler and Sigillito. The upper bound in \eqref{platestronger} is simply wrong for rectangles. One can of course use classical tight frames, but this leads to overestimation of the Rayleigh quotients. One can also check that, the 4th moment of mass $I_4$ for rectangles is underestimated by the $2$-frame averaging.

In view of the rectangles example, it is somewhat surprising that \eqref{platestronger} is actually true for triangles. The lowest plate eigenvalues of the equilateral triangle and the right isosceles triangles have been numerically estimated by Kuttler and Sigillito \cite[Table 4]{KS81}. Rescaling to have area equal $1$, we get $1839$ for the equilateral triangle and $2216$ for the right isosceles triangle. Taking their value for equilateral triangle and finding our stronger upper bound leads to $2389$ as a bound for the right isosceles triangle. Leissa \cite{L69} also lists a few values for isosceles triangles in Table 7.2. After rescaling to area $1$, the equilateral triangle has $1845$, the right isosceles triangle $2190$, and the acute isosceles triangle with angle $30$ degrees has an eigenvalue $2490$. Our upper bound for this triangle is $2910$, while trial function based bound due to Cox and Klein (equation (7.2) in Leissa's report) is about $3400$. The same bound seems better than our result for obtuse isosceles triangles (using Figure 7.2 from Leissa), indicating that their trial function is not just based on a linearly transformed trial function from equilateral triangle. Nevertheless, using our method we get easy to find bounds, that are not too far from the known values. 

Note also that a linear transformation between two triangles is easy to write down as a matrix transforming vertices. The appropriate Schatten norms are then easily calculated without finding singular values.

\subsubsection{Buckling problem} As in the plate problem, we compare our upper bounds with already available results. The exact buckling eigenvalue for a ball is known, while other domains would require a numerical approach. Furthermore, the Faber-Krahn-type isoperimetric inequality is still an open conjecture. Therefore tight bounds are essential. Estimates are available for ellipses, by work of McLaurin \cite{McL69}, giving us some comparisons with our bounds. \autoref{bucklingellipse} summarizes results using $2$-frame based \autoref{thmbuckling}, and a weaker classical frame bounds. The $2$-frame based bound is again quite tight, thanks to exactly evaluated Rayleigh quotient. 

The exact buckling eigenvalue of the unit disk equals $j_{1,1}^2\approx 14.682$ (square of the first zero of Bessel $J_1$ function, see e.g. \cite{AL96}). \autoref{thmbuckling} provides the following bound for an ellipse with semiaxes $a$ and $b$ (ratio $r=a/b$)

\begin{align*}
  \Lambda|_{E(a,b)}\le \frac{j_{1,1}^2}{ab} \frac{3r^4+2r^2+3}{4r(r^2+1)}\;\left(\le \frac{14.682}{ab} \frac{r^4+1}{r(r^2+1)}\right).
\end{align*}
Note that $\frac{j_{1,1}^2}{ab}$ represents the unscaled eigenvalue of a disk with area $ab$. Its numerator corresponds to the scale-invariant product $\Lambda A$ that is conjectured to be minimal for the disk among all domains (P\'olya-Szeg\"o conjecture, see \cite{AL96,ABL97}).

\begin{table}
  \centering
  \begin{tabular}{lccccc}
  $a/b$                             & $1.2$   & $1.4$ & $1.6$ & $2$ & $4$\\
  \hline
  $1$-frames                        & $15.41$ & $17.15$ & $19.47$ & $24.96$ & $55.49$ \\
  $2$-frames, \autoref{thmbuckling} & $15.17$ & $16.34$ & $17.90$ & $21.66$ & $43.34$ \\
  McLaurin (lower bound) $\quad$    & $15.1\;$  & $16.1\;$  & $17.5\;$  & $20.8\;$  & $39.0\;$ \\
  \end{tabular}
  \caption{Fundamental frequency for the buckling problem on ellipse with varying semiaxes ratios $a/b$. Note that exact eigenvalue for a unit disk equals approximately $14.682$. All results stated for $ab=1$.}
  \label{bucklingellipse}
\end{table}

\section{Bochner's subordinators (fractional order operators)}\label{sec:fractional}
In this section we discuss general fractional operators related to Bochner subordination of the Brownian semigroup. This ties our results to the potential theory of the subordinated Brownian motion (see \cite[Chapter 5]{BBKRSV} for a broad overview). At the level of pseudodifferential operators we seek upper bounds for operators $\Psi(-\Delta)$, where the function $\Psi$ is a complete Bernstein function \cite{SSV12}. Weak formulations for such operators have the following frequency domain representation
\begin{align*}
  Q(u,u)=(\Psi(-\Delta)u,u)=\int_{\Rd} \Psi(|\xi|^2) |\widehat u|^2\,d\xi,  
\end{align*}
with the domain $H_0^\beta(\Omega)$ where $\beta\le 1$ depends on $\Psi$. See Chen-Song \cite{CS05,CS06} and Schilling-Song-Vondra{\v{c}}ek \cite{SSV12} for a detailed account.

Even though the above formulation uses complete Bernstein functions, in order to apply our method we only need $\Psi$ to be concave. Therefore our results apply to any weakly defined operator with a multiplier $\Psi(|\xi|^2)$ with concave $\Psi$, as long as its definition makes sense and the spectrum is discrete. 

The quadratic form above fits the framework described in \autoref{qforms}. Therefore we can estimate the eigenvalues of $\Psi(-\Delta)$ on $T(\Omega)$ using the eigenvalues of a slightly modified multiplier on $\Omega$, via \autoref{thmgeneralmult} with $\Phi_1=0$ and $\Phi_2=-\Psi$. 

Suppose $u_i$ are orthonormal eigenfunctions for $\Psi(-\Delta)$ on a domain $\Omega$ with irreducible isometry group. Then \autoref{thmgeneralmult} implies:

\begin{theorem}\label{thmconcave}
  For any concave $\Psi$, linear transformation $T$ and $c=\frac{\|T^{-1}\|_2^2}{d}$ we have
  \begin{align*}
  \lambda_1+\dots+\lambda_n\Big|_{\Psi(|\xi|^2/c),T(\Omega)}&\le
  \lambda_1+\dots+\lambda_n\Big|_{\Psi(|\xi|^2),\Omega}
  \end{align*}
\end{theorem}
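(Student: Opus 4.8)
The plan is to obtain \autoref{thmconcave} as a direct specialization of \autoref{thmgeneralmult}, absorbing the scaling into the choice of symbol. Put $c=\|T^{-1}\|_2^2/d$ and let $f(t)=\Psi(t/c)$, so that $f(|\xi|^2)=\Psi(|\xi|^2/c)$ is exactly the Fourier symbol of the operator on $T(\Omega)$ appearing on the left-hand side. Since $\Psi$ is concave and $t\mapsto t/c$ is an increasing linear substitution, $f$ is concave, hence $-f$ is convex. The first step is therefore to write $f=\Phi_1-\Phi_2$ with $\Phi_1\equiv 0$ and $\Phi_2=-f$, both convex, so that the hypotheses of \autoref{thmgeneralmult} are met (the domain $\Omega$ has irreducible isometry group by assumption).

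The second step is to evaluate the transformed symbol $F[\Phi_1,\Phi_2,T]$ of \eqref{newmult}. With $\Phi_1\equiv 0$ the first term vanishes, and using $\tr(T^{-1}T^{-\dagger})=\|T^{-1}\|_2^2$ together with the choice of $c$,
\begin{align*}
  F[0,-f,T](t)=-(-f)\!\left(\frac{t}{d}\tr T^{-1}T^{-\dagger}\right)=f\!\left(\frac{t\,\|T^{-1}\|_2^2}{d}\right)=f(ct)=\Psi(t).
\end{align*}
So \autoref{thmgeneralmult} yields $\lambda_1+\dots+\lambda_n|_{\Psi(|\xi|^2/c),T(\Omega)}\le\lambda_1+\dots+\lambda_n|_{\Psi(|\xi|^2),\Omega}$, which is the assertion; the equality case of \autoref{thmgeneralmult} for $T$ a multiple of an orthogonal matrix is consistent with this, since then $c$ is the reciprocal square of the dilation factor and the two sides coincide after a change of variables.

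A second, self-contained route (not invoking \autoref{thmgeneralmult}) would start from the averaging bound \eqref{averagesum} applied with the orthonormal eigenfunctions $u_i$ of $\Psi(|\xi|^2)$ on $\Omega$, giving $\lambda_1+\dots+\lambda_n|_{\Psi(|\xi|^2/c),T(\Omega)}\le\frac{1}{|G|}\sum_{U}\sum_i\int_{\Rd}\Psi(|T^{-\dagger}U^\dagger\xi|^2/c)\,|\widehat u_i|^2\,d\xi$. One then applies \autoref{nontight} pointwise in $\xi$ with the linear (hence convex) $\Phi(t)=t$ and the concave function $t\mapsto\Psi(t/c)$; after a relabeling over the group and using that singular values are invariant under transposition, $F[\Phi,0,T](t)=\frac{t}{d}\|T^{-1}\|_2^2=ct$, so the inner group average collapses to $\Psi(ct/c)=\Psi(|\xi|^2)$, reproducing the right-hand side. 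Choosing the $u_i$ optimally at the end identifies the right-hand side with $\lambda_1+\dots+\lambda_n|_{\Psi(|\xi|^2),\Omega}$.

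I do not expect a genuine obstacle: \autoref{thmconcave} is essentially a corollary, and the only points requiring care are bookkeeping ones — arranging the rescaling so that $c$ (rather than $1/c$) multiplies $|\xi|^2$ on the left, verifying that $-\Psi(\cdot/c)$ is honestly convex, and noting that the operator with symbol $\Psi(|\xi|^2/c)$ on $T(\Omega)$ lies in the discrete-spectrum framework of \autoref{qforms} precisely when the original one does (automatic for the Bernstein-function examples, where $\Psi$ is in addition nonnegative and of at most linear growth).
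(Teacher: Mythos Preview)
Your proposal is correct and matches the paper's approach: the paper deduces \autoref{thmconcave} directly from \autoref{thmgeneralmult} with $\Phi_1=0$ and $\Phi_2=-\Psi$ (after absorbing the rescaling by $c$ into the symbol), exactly as in your first route. Your second, self-contained route simply unpacks the proof of \autoref{thmgeneralmult} for this special case via \eqref{averagesum} and \autoref{nontight}, so it is not a genuinely different argument.
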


Note that unless $\Psi$ is homogeneous we are working with a modified multiplier on $T(\Omega)$, since $\Psi(|\xi|^2/c)$ is not proportional to $\Psi(|\xi|^2)$. We observed the same phenomenon in the bi-Laplacian case with tension term.  Finally, $\|T^{-1}\|_2^2$ is related to the second moment of mass $I_2$ (polar moment of inertia) of $T(\Omega)$ (see \cite[Lemma 9]{LS11} or \autoref{lemmoments} with $p=1$). 

Take $\Psi(t)=t^{\alpha/2}$ ($\alpha/2$-stable subordinator) to get \autoref{thmfractional}. Note that homogeneity of $\Psi$ allows us to restate the result using one operator and using geometric quantities. We can also derive a stronger result for ellipses as images of unit ball $B$ (full orthogonal group as isometry group with integral over Haar measure as averaging). We plug the multiplier into \eqref{averagesum} to get 
\begin{align}
  \lambda_1+\dots+\lambda_n\Big|_{T(B)}\le
  \sum_{i=1}^n\int_{\Rd}\int_{O(d)}|T^{-\dagger}U^\dagger \xi|^\alpha\,d\mu(U)|\widehat u_i|^2d\xi
\end{align}
Rotational invariance of Haar measure allows us to diagonalize $T$ and arrive at \eqref{oversphere} with $2p=\alpha$ (note that $p$ is not an integer here). The eigenvalues of the matrix represent semiaxes of the ellipsoid. For simplicity we only look at ellipses (dimension 2) with semiaxes $a$ and $b$. We get
\begin{align*}
  \lambda_1+\dots+\lambda_n\Big|_{E(a,b)}\le
  \sum_{i=1}^n\int_{\Rd}\left( \frac{1}{2\pi} \int_0^{2\pi} (a^2\cos^2\theta+b^2\sin^2\theta)^\alpha\,d\theta\right) |\xi|^\alpha|\widehat u_i|^2 \,d\xi
\end{align*}
Note that when $\alpha=1$ we recover the perimeter of the ellipse divided by the perimeter of the disk, inside the parentheses. For arbitrary $\alpha$ the resulting integral is a rather complicated combination of hypergeometric functions ${}_2F_1$. At the slight expense of accuracy, for $\alpha\le 1$ we can use Jensen inequality to get perimeter to the power $\alpha$. Now take $u_i$ to be the eigenfunctions on the disk to get \autoref{thmperimeter}.

To obtain this result we essentially used a $1/2$-frame-like identity (we used \eqref{oversphere} with $p=1/2$). This is only possible for ellipses. Indeed, linear transformation of a square could give a rhombus or a rectangle, and the perimeter would not be the same. Similarly for triangles. 

Now we switch to relativistic subordinator $\Psi_m(t)=\sqrt{m^2+t}-m$, which is no longer homogeneous. Therefore the constant $m$ must be rescaled as did $\tau$ in the plate with tension case. Note that
\begin{align*}
  \Psi_m(t/c)=\frac{1}{\sqrt{c}}\left(\sqrt{(\sqrt{c}m)^2+t}-\sqrt{c} m\right).
\end{align*}
Therefore the mass constant needs to scale with $\sqrt{c}$ in order to have equality for eigenvalues of balls with arbitrary radii.  However, $\Psi_m(t)$ as a function of $m$ is decreasing, and for $c>1$
\begin{align*}
  \Psi_m(t/c)\ge \frac{\Psi_m(t)}{\sqrt{c}}.
\end{align*}
Hence \autoref{thmconcave} implies \autoref{thmrelativistic}. 

Finally, when $\Psi$ is any complete Bernstein function, then $\Psi(-\Delta)$ is a generator of a generic subordinated Brownian motion. Concavity of $\Psi$ implies that
\begin{align*}
  f(t/c)\ge \frac{f(t)}{c}.
\end{align*}
Moreover, when $\Psi$ satisfies $\Psi(t/c)\le c^{-\beta}\Psi(t)$ we get \autoref{thmsubor}.

\subsection{Related results}\label{fraccomp}
Even though the spectrum of the fractional Laplacian was studied in many contexts, there are very few known bounds for eigenvalues. Furthermore, it is not even possible to find the exact eigenvalues for intervals, or balls. Kulczycki, Kwa\'snicki, Ma{\l}ecki and St\'os \cite{KKMS10} found very accurate numerical estimates for intervals, while Dyda \cite{D12} found the best (so far) estimates for the first eigenvalue for balls in arbitrary dimension. The only known general bounds are based on isoperimetric inequality and inradius of the domain, by work of Ba\~nuelos, Lata{\l}a and M{\'e}ndez-Hern{\'a}ndez \cite{BLM01}. In this case, the eigenvalues of a general domain are estimated by the eigenvalue of a ball. 

One can also find bounds for the eigenvalues of the fractional Laplacian by relating them to the eigenvalues of the Laplacian, using quadratic form comparability (Chen-Song \cite{CS05}). 

  We propose another approach, based on comparisons to ellipsoids, instead of balls. This gives sharper results for elongated domains, comparing to inradius based approach. Let $E\subset \Omega$ be a John ellipsoid of $\Omega$ (ellipsoid with maximal volume contained in $\Omega$, see John \cite{J48} or Ball \cite{B92}). Then $d E$ ($\sqrt{d} E$ if $\Omega$ is centrally symmetric) contains $\Omega$. Hence
\begin{align*}
  (\lambda_1+\dots+\lambda_n)c^{-\alpha/2}\Big|_{\alpha,E}\le 
\lambda_1+\dots+\lambda_n\Big|_{\alpha,\Omega}\le
\lambda_1+\dots+\lambda_n\Big|_{\alpha,E},
\end{align*}
where $c$ equals $d$ for general domains, or $\sqrt{d}$ for centrally symmetric domains. It is also possible to find an optimal constant $c$, for a specific domain $\Omega$, which will be no worse than the general case. Now we find $T$ such that $T^{-1}(E)$ is a unit disk, and use our bounds to get an upper bound on $\Omega$.

Let $\Omega$ be a plane domain with John ellipse $E$ with semiaxes of length $1$ and $a>1$. There exists a linear transformation $T^{-1}$ that takes this ellipse to a unit disk $D$. It also transforms the original domain into $\Omega'$ with John ellipse $D$ (the ellipse is a disk). \autoref{thmfractional} (or \autoref{thmsubor}) gives the following bound for the lowest eigenvalue of the original domain
\begin{align*}
  \lambda_1 \Big|_{\alpha,\Omega}\le \lambda_1\Big|_{\alpha,E} \le \frac{\|T^{-1}\|_{HS}^\alpha}{2^{\alpha/2}}\lambda_1\Big|_{\alpha,D}= \left(\frac{1+1/a^2}{2}\right)^{\alpha/2}\lambda_1\Big|_{\alpha,D}
\end{align*}
The domain $\Omega$ has inradius $r$, that satisfies $1\le r^2\le a$, since John ellipse has the largest volume of all ellipses inside $\Omega$, and inradius is at least as large as the short semiaxis in John ellipse. Inradius bound from \cite{BLM01} reads
\begin{align*}
  \lambda_1\Big|_{\alpha,\Omega}\le \frac{1}{r^\alpha} \lambda_1\Big|_{\alpha,D}.
\end{align*}
It is not clear if our bound is better in general then inradius based bound due to Ba\~nuelos et. al. \cite{BLM01}. If $\Omega$ is a rectangle (or ellipse), then $r=1$ and our bound is certainly better. Applying the same linear transformation $T$ to rotated squares leads to parallelograms. The maximal inradius is achieved when the image is a rhombus. Surprisingly, one checks that in this case both bounds give the same result. Hence our bound is at least as good on parallelograms as the inradius based bound. On the other hand, the discussion in \cite[Section 8.1]{LS11plane} shows that inradius based bound is better than our bound for all triangles. In general, inradius based bound should be better for polygons with inscribed circle and other nearly round domains. While our bound should be stronger on elongated domains. 

For $\alpha\le 1$ we can also use \autoref{thmperimeter} to get stronger upper bounds for $\lambda_1|_{\alpha,E}$, however this involves the perimeter of ellipse $E$ (elliptic integral).

\section*{Acknowledgements}
The work was partially supported by NCN grant 2012/07/B/ST1/03356.

The author is grateful to Richard Laugesen for invaluable discussions on the spectral theory and the plate problems, as well as suggested improvements to some arguments. The author would also like to thank Jonathan Brundan for his guidance on the algebraic topics.

\end{document}